 \def\lcm{\operatorname*{lcm}}
\newtheorem{theorem}{Theorem}[section]
\newtheorem{corollary}[theorem]{Corollary}
\newtheorem{lemma}[theorem]{Lemma}
\newtheorem{proposition}[theorem]{Proposition}
\newtheorem{conjecture}[theorem]{Conjecture}
\theoremstyle{definition}
\newtheorem{definition}[theorem]{Definition}
\newtheorem{remark}[theorem]{Remark}
\newtheorem{question}[theorem]{Question}
\newtheorem{example}[theorem]{Example}
\theoremstyle{remark}
\renewcommand{\theclaim}{\textup{\theclaim}}
\numberwithin{equation}{section}
\def\openone%{\hbox{\upshape \small1\kern-3.3pt\normalsize1}}
\newbox\ipbox
\newcommand{\diracb}[1]{\left\langle #1\mathrel{\mathchoice

{\setbox\ipbox=\hbox{$\displaystyle \left\langle\mathstrut
#1\right.$}

\vrule height\ht\ipbox width0.25pt depth\dp\ipbox}

{\setbox\ipbox=\hbox{$\textstyle \left\langle\mathstrut
#1\right.$}

\vrule height\ht\ipbox width0.25pt depth\dp\ipbox}

{\setbox\ipbox=\hbox{$\scriptstyle \left\langle\mathstrut
#1\right.$}

\vrule height\ht\ipbox width0.25pt depth\dp\ipbox}

{\setbox\ipbox=\hbox{$\scriptscriptstyle \left\langle\mathstrut
#1\right.$}

\vrule height\ht\ipbox width0.25pt depth\dp\ipbox}

}\right. }
\newcommand{\dirack}[1]{\left. \mathrel{\mathchoice

{\setbox\ipbox=\hbox{$\displaystyle \left.\mathstrut
#1\right\rangle$}

\vrule height\ht\ipbox width0.25pt depth\dp\ipbox}

{\setbox\ipbox=\hbox{$\textstyle \left.\mathstrut
#1\right\rangle$}

\vrule height\ht\ipbox width0.25pt depth\dp\ipbox}

{\setbox\ipbox=\hbox{$\scriptstyle \left.\mathstrut
#1\right\rangle$}

\vrule height\ht\ipbox width0.25pt depth\dp\ipbox}

{\setbox\ipbox=\hbox{$\scriptscriptstyle \left.\mathstrut
#1\right\rangle$}

\vrule height\ht\ipbox width0.25pt depth\dp\ipbox}

} #1\right\rangle}
\newcommand{\bz}{\mathbb{Z}}
\newcommand{\br}{\mathbb{R}}
\newcommand{\bn}{\mathbb{N}}
\newcommand{\beq}{\begin{equation}}
\newcommand{\eeq}{\end{equation}}
\def\blfootnote{\xdef\@thefnmark{}\@footnotetext}
\renewcommand{\mod}{\operatorname{mod}}
\def\N{\mathbb{N}}
\def\-{^{-1}}
\def\Z{\mathbb{Z}}
\begin{document}

\title[Scaling of spectra]{Number theoretic considerations related to the scaling of spectra of Cantor-type measures}
\author{Dorin Ervin Dutkay}

\address{[Dorin Ervin Dutkay] University of Central Florida\\
	Department of Mathematics\\
	4000 Central Florida Blvd.\\
	P.O. Box 161364\\
	Orlando, FL 32816-1364\\
U.S.A.\\} \email{Dorin.Dutkay@ucf.edu}

\author{Isabelle Kraus}

\address{[Isabelle Kraus] University of Central Florida\\
	Department of Mathematics\\
	4393 Andromeda Loop N.\\
	Orlando, FL 32816-1364\\
U.S.A.\\} \email{izzy.kraus@knights.ucf.edu}
\subjclass[2010]{11A07,11A51,42C30}    
\keywords{Cantor set, Fourier basis, prime decomposition, spectral measure, base $n$ decomposition}
\begin{abstract}
We investigate some relations between number theory and spectral measures related to the harmonic analysis of a Cantor set. Specifically, we explore ways to determine when an odd natural number $m$ generates a complete or incomplete Fourier basis for a Cantor-type measure with scale $g$.
 \end{abstract}
\maketitle \tableofcontents

\section{Introduction}
In \cite{JP98}, Jorgensen and Pedersen constructed the first example of a singular fractal measure on a Cantor set, which has an orthonormal Fourier series. This Cantor set is obtained from the interval $[0,1]$, dividing it into four equal intervals and keeping the first and the third, $[0,1/4]$ and $[1/2,3/4]$, and repeating the procedure infinitely many times. The measure $\mu_4$ on this Cantor set associates measure 1 to $[0,1]$, measure $\frac12$ to $[0,1/4]$ and $[2/4,3/4]$, measure $\frac14$ to the four intervals in the next step of the construction and so on. It is the Hausdorff measure of dimension $\frac12$ on this Cantor set, and it is also the invariant measure of the iterated function system $\tau_0(x)=x/4$, $\tau_2(x)=(x+2)/4$ (see \cite{Hut81} or \cite{JP98} for details). 

Jorgensen and Pedersen proved the surprising result that the Hilbert space $L^2(\mu_4)$ has an orthonormal basis formed with exponential functions, i.e., a Fourier basis, $E(\Gamma_0):=\{e^{2\pi i\lambda x} : \lambda\in\Gamma_0\}$ where 
\begin{equation}
\Gamma_0:=\left\{\sum_{k=0}^n4^kl_k : l_k\in\{0,1\},n\in\bn\right\}.
\label{eqi1.3}
\end{equation}

A set $\Lambda$ in $\br$ is called a {\it spectrum} for a Borel probability measure $\mu$ on $\br$ if the corresponding exponential functions $\{e^{2\pi i\lambda x} : \lambda\in\Lambda\}$ form an orthonormal basis for $L^2(\mu)$. 

Jorgensen and Pedersen's example opened up a new area of research and many other examples of singular measures which admit orthonormal Fourier series have been constructed since, see e.g., \cite{MR1785282,MR1929508,DJ06,DJ07d,MR3055992,MR2297038}. 

In \cite{DJ12}, it was proved also that the set $5^k\Gamma_0$ is a spectrum for the measure $\mu_4$, for any $k\in\bn$! This means that the operator on $L^2(\mu_4)$ which maps $e^{2\pi i\lambda x}$ into $e^{2\pi i 5^k\lambda x}$ is actually unitary, for all $k$, which means that there are some hidden symmetries, a certain scaling by 5 in the geometry of this Cantor set. These operators were further investigated in \cite{MR2966145,MR3202868,MR3310953}.

Later, Dutkay and Haussermann \cite{DuHa16} studied for what digits $\{0,m\}$, with $m$ odd, the set 
$$\Gamma(m):=m\Gamma_0=\left\{\sum_{k=0}^n 4^kl_k : l_k\in\{0,m\}, n\in\bn\right\}$$
is a spectrum for $L^2(\mu_4)$. Among other things, they proved that, for any prime number $p>3$, the set $p^k\Gamma_0$ is a spectrum for $\mu_4$, and there are some interesting number theoretic considerations that are required to solve this problem. 

Now, we will generalize these results. 

Consider the iterated function system generated by a scale $g$, with $g$ even, and the digits $B=\{0,\frac g2\}$,
$$\tau_0(x)=\frac{x}{g},\quad \tau_{g/2}(x)=\frac{x+g/2}{g}.$$
 Let $\mu$ be the invariant measure for this iterated function system. This is the unique Borel probability measure on $\br$ which satisfies the invariance equation 
$$\mu(E)=\frac12\left(\mu(\tau_0^{-1}(E))+\mu(\tau_{g/2}^{-1}(E))\right),\mbox{ for all Borel sets $E$},$$ 
(see \cite{Hut81}).

We want to find the answer to the following question:
\begin{question}\label{q1}
 For what digits $\{0,m\}$ is the set 
\begin{equation}
\Gamma(m):=m\Gamma(1)=\left\{\sum_{k=0}^n g^kl_k : l_k\in\{0,m\}, n\in\bn\right\}
\label{eqgammam}
\end{equation}
a spectrum for $L^2(\mu)$?
\end{question}

As in \cite{DJ06}, we look for Hadamard triples of the form $(R,B,L)$ with $L=\{0,m\}$. That means that the matrix
$$\frac{1}{\sqrt 2}\left(e^{2\pi  i\frac{bl}R}\right)_{b\in B,l\in L}$$
is unitary, so $e^{2\pi i\frac{(g/2)\cdot m}{g}}=-1$. This means that $m$ is odd.

 It was shown \cite{DJ06} that the numbers \textit{m} that give spectra can be characterized in terms of {\it extreme cycles}, i.e., we want to find the even integers $g$ for which there exist $l_0,\dots, l_{r-1}\in\{0,m\}$, not all equal to 0, such that
\begin{equation}
x_1=\frac{x_0+l_0}g,\ x_2=\frac{x_1+l_1}g,\ \dots,x_{r-1}=\frac{x_{r-2}+l_{r-2}}g,\ x_0=\frac{x_{r-1}+l_{r-1}}g,
\label{eqec}
\end{equation}

and 
\begin{equation}
\left|\frac{1+e^{2\pi i \frac g2 x_k}}{2}\right|=1,\quad(k\in\{0,\dots,r-1\})
\label{eq1.1}
\end{equation}

where the finite set $\{x_0, x_1, \dots, x_{r-1}\}$ is the extreme cycle for $\{0,m\}$, and $x_i$ are the extreme cycle points. If such an extreme cycle exists then the set of exponential functions corresponding to $\Gamma(m)$ is incomplete but orthonormal, if no such extreme cycle exists then the set of exponential functions corresponding to $\Gamma(m)$ is an orthonormal basis, i.e., $\Gamma(m)$ is a spectrum. 

We note that points $x_0,\dots,x_{r-1}$ have to be integers. Indeed, equation \eqref{eq1.1} implies that $x_i=\frac{k_i}{g/2}$, for some $k\in\bz$. Assume that $k_0$ is not divisible by $g/2$. We have 
$$\frac{\frac{k_0}{g/2}+l_0}{g}=\frac{x_0+l_0}{g}=x_1=\frac{k_1}{g/2}$$
for some integer $l$. 

Then $\frac{k_0}{g/2}=2k_1-l_0$ so $k$ has to be divisible by $g/2$, contradiction. Thus, $x_0$ is in $\bz$ so all the points in the extreme cycle have to be integers. 

Hence, Question \ref{q1} becomes a purely number theoretical question:

\begin{question}\label{q2}
Given an even number $g\geq 4$, for what odd numbers $m\geq 1$, are there non-trivial extreme cycles, i.e., finite sets $C=\{x_0,\dots, x_{r-1}\}$ of integers and digits $l_0,\dots,l_{r-1}\in\{0,m\}$ such that 
$$x_1=\frac{x_0+l_0}g,\ x_2=\frac{x_1+l_1}g,\ \dots,x_{r-1}=\frac{x_{r-2}+l_{r-2}}g,\ x_0=\frac{x_{r-1}+l_{r-1}}g?$$
\end{question}

The extreme cycle $\{0\}$ corresponding to the digit $0$, is called the {\it trivial} extreme cycle.

\begin{definition}
We say that $m$ is {\it complete} if the only extreme cycle for the digit set $\{0,m\}$ is the trivial one $\{0\}$. Otherwise, $m$ is {\it incomplete}. 
In the paper, when we refer to an extreme cycle, we will assume it is not trivial. 
\end{definition}
As we mentioned above , if $m$ is complete then the set $\Gamma(m)$ in \eqref{eqgammam} is a complete orthonormal basis, and if $m$ is incomplete then $\Gamma(m)$ is an incomplete orthonormal set in $L^2(\mu)$ (see \cite{DJ06}). 

According to Lemma \ref{lem2.5}, any odd multiple of an incomplete number is also incomplete. This justifies the next definition. 

\begin{definition}
We say that an odd number $m$ is $primitive$ if $m$ is incomplete and, for all proper divisors $d$ of $m$, $d$ is complete. In other words, there exist non-trivial extreme cycles for the digits $\{0,m\}$ and there are no non-trivial extreme cycles for the digits $\{0,d\}$ for any proper divisor $d$ of $m$. We say that a primitive number $m$ is non-trivial if $m\neq g-1$. 
\end{definition}

In Theorems \ref{th2.12}, \ref{th2.13} and \ref{th2.18} we present some large classes of numbers which are complete (such as prime powers, in most cases). On the other hand, in Theorem \ref{th2.17}, we show that there are infinitely many primitive numbers. 

Finding explicit formulas for primitive numbers (and thus for incomplete or complete numbers) seems to be a difficult task, even for particular choices of number $g$. 

In Section \ref{sec2.4} we study the connection between primitive numbers and their order relative to $g$ (see Definition \ref{def2.8}). The key technical tool is Proposition \ref{prop2.23}. In Theorems \ref{th4.1} and \ref{th4.8.1} we study primitive numbers of small order. In Theorem \ref{th4.9} we present the form of a primitive number in terms of its order and the digits corresponding to its extreme cycle. Theorem \ref{th2.23} presents a way to locate primitive numbers and in Theorem \ref{th2.34} we give an explicit example of a non-trivial primitive number. 

Since, from Theorem \ref{th2.18}, we see that, in most cases, prime powers are complete, in Section \ref{sec2.6}, we study classes of composite numbers which are complete. The results are based on some important technical lemmas : Lemma \ref{lem2.4}, \ref{lem2.27.5} and \ref{lem2.29}. In Section \ref{sec2.6}, we use these lemmas in various ways to obtain  new composite numbers which are complete, from simpler complete composite numbers.  
\section{Main results}

For the rest of the paper $g$ will be an even integer $g\geq 4$ and $m$ will be an odd integer $m\geq1$.

\subsection{Some preliminary lemmas}

\begin{lemma}\label{lemi1.4}
If $x_0\in\bz$ is an extreme cycle point with digits $l_0,\dots,l_{r-1}$ as in \eqref{eqec}, then $x_0$ has a periodic base $g\in\bn$ expansion, 
\begin{equation}
x_0=\frac{l_{r-1}}g+\frac{l_{r-2}}{g^2}+\dots+\frac{l_0}{g^r}+\frac{l_{r-1}}{g^{r+1}}+\dots+\frac{l_0}{g^{2r}}+\dots,
\label{eqi1.4.1}
\end{equation}
and $0< x_0\leq\frac {m}{g-1}$. 
We write this as $x_0=.\underline{l_{r-1}l_{r-2}\dots l_1l_0}$, the underline indicates the infinite repetition of the digits $l_{r-1}\dots l_0$ in the base $g$ expansion  of $x_0$. 

Hence
$$x_0=\frac{g^{r-1}l_{r-1}+g^{r-2}l_{r-2}+\dots+gl_1+l_0}{g^r-1}.$$
Moreover, $$\{x_0 : x_0 \text{ is an extreme cycle point } \} = X_L \cap \mathbb{Z},$$ 
where $X_L$ is the attractor of the iterated function system
$$\sigma_0(x) = \frac{x}{g}, \,\,\sigma_m(x) = \frac{x+m}{g},$$
so
$$X_L = \cup_{l \in \{0,m\}} \sigma_l(X_L),$$
\begin{equation}
X_L = \Bigg\{\sum\limits_{n=1}^{\infty} \frac{l_n}{g^n} : l_n \in \{0,m\} \text{ for all n} \in \mathbb{N} \Bigg\}.
\label{eqi1.4.2}
\end{equation} 
\end{lemma}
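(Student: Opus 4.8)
The plan is to prove Lemma \ref{lemi1.4} by unwinding the cycle relations \eqref{eqec} algebraically. First I would solve the system \eqref{eqec} for $x_0$. Starting from $x_1 = (x_0+l_0)/g$ and substituting successively, after $r$ steps one returns to $x_0$ and obtains $g^r x_0 = x_0 + (g^{r-1}l_0 + g^{r-2}l_1 + \dots + g\,l_{r-2} + l_{r-1})$; rearranging gives the closed formula $x_0 = (g^{r-1}l_{r-1} + \dots + g\,l_1 + l_0)/(g^r-1)$ after relabeling the sum. Expanding $\frac{1}{g^r-1} = \sum_{j\ge 1} g^{-jr}$ as a geometric series and distributing the digits produces exactly the periodic base-$g$ expansion \eqref{eqi1.4.1}, which is the notation $x_0 = .\underline{l_{r-1}\dots l_0}$. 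The two-sided bound $0 < x_0 \le \frac{m}{g-1}$ is then immediate: the numerator is a nonnegative integer which is positive because not all $l_k$ are zero (so $x_0>0$ since each $l_k\le m$ and at least one is $m$, making the numerator $\ge 1$; strictly speaking, positivity follows from the cycle being nontrivial, i.e., some $l_k = m \neq 0$, forcing $x_0>0$), and the largest possible value occurs when every digit equals $m$, giving $x_0 \le m\sum_{n\ge 1} g^{-n} = \frac{m}{g-1}$.

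Next I would establish the identification $\{x_0 : x_0 \text{ an extreme cycle point}\} = X_L \cap \mathbb{Z}$. One inclusion is clear from \eqref{eqi1.4.1}: a cycle point has a base-$g$ expansion with all digits in $\{0,m\}$, hence lies in the attractor $X_L$ as described in \eqref{eqi1.4.2}, and it is an integer by hypothesis. For the reverse inclusion, suppose $x\in X_L\cap\mathbb{Z}$, so $x = \sum_{n\ge 1} l_n g^{-n}$ with $l_n\in\{0,m\}$ and $x\in\mathbb{Z}$. Define the orbit $x^{(0)} = x$, $x^{(k+1)} = g x^{(k)} - l_{k+1}$ (equivalently $x^{(k)} = \sum_{n\ge 1} l_{n+k} g^{-n}$, the shifted tail); each $x^{(k)}$ is again in $X_L$, lies in $[0, \frac{m}{g-1}]$, and — since $x^{(k+1)} = g x^{(k)} - l_{k+1}$ and $x^{(0)}$ is an integer — every $x^{(k)}$ is an integer as well. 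Since all $x^{(k)}$ lie in the finite set $X_L \cap \mathbb{Z} \cap [0, \tfrac{m}{g-1}]$, the orbit is eventually periodic; but the map $y\mapsto gy - l$ (with the appropriate digit) is injective on $\mathbb{R}$, so an eventually periodic orbit is in fact purely periodic, and $x$ itself lies on a cycle of the form \eqref{eqec}. This shows $x$ is an extreme cycle point (the modulus condition \eqref{eq1.1} being automatic once the points are integers, since $\frac g2 x_k \in \mathbb{Z}$ and — this needs the parity of the relevant quantity — actually \eqref{eq1.1} holds because $|1 + e^{2\pi i (g/2) x_k}|/2 = 1$ whenever $(g/2)x_k$ is an integer; here one uses that $x_k\in\mathbb{Z}$ and $g$ is even).

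Finally I would record the finiteness of $X_L \cap \mathbb{Z}$ used implicitly above: it follows from $X_L \subseteq [0, \frac{m}{g-1}]$, a bounded interval, so the set of integers in it is finite. The display \eqref{eqi1.4.2} for $X_L$ itself is just the standard description of the attractor of the IFS $\{\sigma_0, \sigma_m\}$, which I would cite from \cite{Hut81}, or derive directly by iterating the self-similarity relation $X_L = \sigma_0(X_L) \cup \sigma_m(X_L)$.

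I expect the main obstacle to be the reverse inclusion, specifically the step showing that an integer in $X_L$ generates a \emph{purely} (not merely eventually) periodic orbit and that all iterates stay integral. The integrality of iterates is the delicate point: it requires observing that the backward maps $y \mapsto gy - l$ preserve $\mathbb{Z}$ and that the forward orbit under these maps stays bounded, which together with finiteness of $X_L\cap\mathbb{Z}$ forces periodicity; injectivity of the affine maps then upgrades eventual periodicity to genuine periodicity. Everything else is routine geometric-series manipulation and bookkeeping with the digit expansions.
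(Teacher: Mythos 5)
Your argument follows the same route as the paper: unwind \eqref{eqec} to get the closed form and the bound, then for the reverse inclusion run the expanding orbit $x^{(k+1)}=gx^{(k)}-l_{k+1}$ inside the finite set $X_L\cap\mathbb{Z}$ and upgrade eventual periodicity to pure periodicity. The one step where you deviate from the paper is exactly the step that needs care, and your justification there does not work as stated. You claim that ``the map $y\mapsto gy-l$ is injective on $\mathbb{R}$, so an eventually periodic orbit is in fact purely periodic.'' Injectivity of each individual affine map is irrelevant here, because the orbit is non-autonomous: the map applied at step $k$ depends on the digit $l_k$, which varies with $k$. If $x^{(a)}=x^{(b)}$ with $a<b$, you have $x^{(a)}=gx^{(a-1)}-l_a$ and $x^{(b)}=gx^{(b-1)}-l_b$; when $l_a\neq l_b$ these are \emph{different} injective maps, and injectivity alone gives no way to conclude $x^{(a-1)}=x^{(b-1)}$. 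Indeed, the induced shift on $X_L$ is two-to-one (the two preimages of $z$ are $z/g$ and $(z+m)/g$), so ``injectivity'' fails for the actual dynamical system you are iterating.

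The missing ingredient — which is precisely what the paper supplies — is an arithmetic one: since $x^{(a-1)}$ and $x^{(b-1)}$ are both integers, subtracting the two relations gives $g\bigl(x^{(a-1)}-x^{(b-1)}\bigr)=l_a-l_b\in\{0,\pm m\}$, so $g$ divides $l_a-l_b$; as $m$ is odd and $g$ is even, $g\nmid m$, forcing $l_a=l_b$ and hence $x^{(a-1)}=x^{(b-1)}$. Induction on this then pulls the coincidence all the way back to $x^{(0)}$, which is what actually places $x$ on a cycle. Equivalently: the two preimages $z/g$ and $(z+m)/g$ differ by $m/g\notin\mathbb{Z}$, so the shift \emph{restricted to integer points} is injective — but that restriction is the whole point, and it must be argued. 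This is a localized and easily repaired gap, but as written the step would fail; everything else in your proposal (the closed formula, the bounds, the integrality of the orbit, the finiteness of $X_L\cap\mathbb{Z}$, and the automatic verification of \eqref{eq1.1} for integer points) matches the paper and is correct.
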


\begin{proof}
Recall that a finite set $\{x_0, x_1, \dots , x_{r-1}\}$ is an extreme cycle for digits $\{0,m\}$ if there exist $l_0, \dots, l_{r-1}\in \{0,m\}$ such that
$$x_1=\frac{x_0+l_0}g,\ x_2=\frac{x_1+l_1}g,\ \dots , x_{r-1}=\frac{x_{r-2}+l_{r-2}}g,\ x_0=\frac{x_{r-1}+l_{r-1}}g.$$
$$x_0=\frac{x_{r-1}}g+\frac{l_{r-1}}{g}=\frac{x_{r-2}}{g^2}+\frac{l_{r-2}}{g^2}+\frac{l_{r-1}}{g}=\dots=\frac{x_0}{g^r}+\frac{l_0}{g^r}+\frac{l_1}{g^{r-1}}+\dots+\frac{l_{r-1}}{g}.$$ 
Iterating this equality to infinity we obtain the base $g$ decomposition of $x_0$. Also 
$$0< x_0\leq \sum_{k=1}^\infty \frac{m}{g^k}=\frac{m}{g-1}.$$

From above, we know that $x_0 \in \mathbb{Z}$. Therefore, $x_0$ is contained in $X_L \cap \mathbb{Z}$. Conversely, if $x_0 \in X_L \cap \mathbb{Z}$ then, if $x_0 \in \sigma_0(X_L)$, we have that there exists $x_{-1} \in X_L$ such that $x_0 = \frac{x_{-1}}{g},$ and we get that $x_{-1} = gx_0 \in \mathbb{Z} \cap X_L$. If $x_0 \in \sigma_m(X_L)$ then there exists $x_{-1} \in X_L$ such that $x_0 = \frac{x_{-1}+m}{g}$. Then $x_{-1} = gx_0 - m \equiv x_0 (\mod m)$. By induction, we obtain $x_{-1}, x_{-2} \ldots$ in $X_L\cap \bz$ and digits $d_0, d_1, \ldots$ in $\{0,m\}$ such that $x_{-i} = \frac{x_{-i-1}+d_i}{g}$. Since the set $X_L\cap\bz$ is finite it follows that there exists $k$ and $p$, $k<p$, such that $x_{-k}=x_{-p}$. That means that $\{x_{-k},x_{-k-1},\dots,x_{-p}\}$ form a cycle. We will show that actually we can start the cycle with $x_0$. 

We have that $\frac{x_{-k}+d_{k-1}}{g}=x_{-k+1}\in\bz$. Also $\frac{x_{-k}+d_{p-1}}{g}=\frac{x_{-p}+d_{p-1}}{g}=x_{-p+1}\in\bz$. This means that $d_{k-1}-d_{p-1}$ is divisible by $g$, and since the only digits we use are $0$ and $m$ and $m$ (odd) is not divisible by $g$ (even), it follows that $d_{k-1}=d_{p-1}$ and therefore $x_{-k+1}=x_{-p+1}$. By induction, we get that $x_0$ must be in the same cycle.

\end{proof}

\begin{lemma}\label{lem2.3}
Assume $m$ is odd and $x_j$ is an extreme cycle point for the digit set $\{0,m \}$. Then $x_j \equiv 0 (\mod g)$ or $x_j \equiv -m (\mod g)$.
\end{lemma}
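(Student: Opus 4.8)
The plan is to read the congruence directly off the cycle relations, using only the integrality of the cycle points that was already established in Lemma \ref{lemi1.4}. Write the extreme cycle containing $x_j$ as $\{x_0,x_1,\dots,x_{r-1}\}$ with digits $l_0,\dots,l_{r-1}\in\{0,m\}$ satisfying \eqref{eqec}, and read the indices modulo $r$ (so that $x_r:=x_0$). The equation governing the successor of $x_j$ in the cycle is
\[
x_{j+1}=\frac{x_j+l_j}{g},
\]
which I would simply rearrange to $x_j=g\,x_{j+1}-l_j$.

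Now by Lemma \ref{lemi1.4} every point of an extreme cycle lies in $\bz$; in particular $x_{j+1}\in\bz$, so $g\,x_{j+1}\equiv 0\pmod g$ and therefore $x_j\equiv -l_j\pmod g$. Since $l_j\in\{0,m\}$, this yields $x_j\equiv 0\pmod g$ when $l_j=0$ and $x_j\equiv -m\pmod g$ when $l_j=m$, which is exactly the disjunction claimed.

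There is no real obstacle here: the argument uses a single cycle equation at the chosen index together with the (already proved) fact that cycle points are integers. The only point worth flagging is that the two alternatives are not mutually exclusive across the cycle — different indices $j$ may fall into different cases — which is precisely why the statement is phrased pointwise rather than as a property of the whole cycle.
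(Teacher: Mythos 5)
Your argument is correct and is essentially identical to the paper's own proof: both rearrange the single cycle relation to $g x_{j+1} = x_j + l_j$ and reduce modulo $g$, using the integrality of $x_{j+1}$. Your remark that the two alternatives are pointwise rather than uniform over the cycle is a fair (and accurate) clarification, but nothing further is needed.
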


\begin{proof}
We have $x_{j+1} = \frac{x_j + l_j}{g}$ where $l_j \in \{0,m \}$. 
Then $gx_{j+1} = {x_j + l_j}.$ 
If $l_{j}$ is 0, we get that $g x_{j+1} = x_j.$ 
Otherwise, $g x_{j+1} = x_{j} + m.$
Considering these modulo $g$, we have $0 \equiv x_j +m (\mod g)$
or $0 \equiv x_j (\mod g)$.
Thus
$x_{j} \equiv -m (\mod g)$
or
$x_{j} \equiv 0 (\mod g)$.
\end{proof}

\begin{lemma}\label{lem2.4}
Let $m$ be an odd number not divisible by $g-1$ and $x_t$ be the largest extreme cycle point in the non-trivial extreme cycle $X$ for the digit set $\{0,m \}$. Then $x_t$ is divisible by $g$.
\end{lemma}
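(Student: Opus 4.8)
The plan is to argue by contradiction using Lemma \ref{lem2.3} together with the size bound $0 < x_0 \le \frac{m}{g-1}$ from Lemma \ref{lemi1.4}. Let $x_t$ be the largest extreme cycle point. By Lemma \ref{lem2.3}, either $x_t \equiv 0 \pmod g$ or $x_t \equiv -m \pmod g$, and the claim is that the second case cannot occur. So suppose $x_t \equiv -m \pmod g$, i.e. $g \mid x_t + m$. The key observation is to look at the predecessor relation in reverse: since $x_t$ is an extreme cycle point there is some $x_{t-1}$ in the cycle and a digit $l_{t-1} \in \{0,m\}$ with $x_t = \frac{x_{t-1} + l_{t-1}}{g}$, hence $x_{t-1} = g x_t - l_{t-1}$. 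I would instead look \emph{forward}: the successor of $x_t$ is $x_{t+1} = \frac{x_t + l_t}{g}$, and I want to choose the analysis so as to produce a cycle point strictly larger than $x_t$, contradicting maximality.

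More precisely, here is the mechanism I expect to use. Consider the point $y := \frac{x_t + m}{g}$. By the assumption $x_t \equiv -m\pmod g$, this $y$ is an integer. I claim $y$ is again an extreme cycle point (equivalently, $y \in X_L \cap \mathbb{Z}$ by Lemma \ref{lemi1.4}): indeed $x_t \in X_L$, so $\sigma_m(x_t) = \frac{x_t+m}{g} = y \in X_L$, and $y \in \mathbb{Z}$, so by the characterization $X_L \cap \mathbb{Z} = \{\text{extreme cycle points}\}$, $y$ lies in some extreme cycle. Now I compare $y$ with $x_t$. Since $m$ is \emph{not} divisible by $g-1$, we have the strict inequality $x_t < \frac{m}{g-1}$ (the bound $x_t \le \frac{m}{g-1}$ of Lemma \ref{lemi1.4} is an equality only when all digits equal $m$, which forces $x_t = \frac{m}{g-1}$, an integer only when $(g-1)\mid m$). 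From $x_t < \frac{m}{g-1}$ one computes $\frac{x_t + m}{g} > x_t$: rearranging, $x_t + m > g x_t$ iff $m > (g-1)x_t$ iff $x_t < \frac{m}{g-1}$, which holds. Hence $y > x_t$. But $x_t$ was assumed to be the largest extreme cycle point among all non-trivial extreme cycles — and $y$ is a positive integer in $X_L$, hence in a non-trivial extreme cycle — contradiction. Therefore the case $x_t \equiv -m \pmod g$ is impossible, and $x_t \equiv 0 \pmod g$, i.e. $g \mid x_t$.

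The step I expect to require the most care is making the "$x_t$ is the largest extreme cycle point" hypothesis bite correctly: one must be sure that the maximum is taken over \emph{all} non-trivial extreme cycle points (not just those within one fixed cycle), which is how the statement should be read, and that $y$ indeed lands in a non-trivial cycle rather than collapsing to $\{0\}$ — but $y = \frac{x_t+m}{g} > x_t > 0$ rules that out immediately. A secondary subtlety is the strict inequality $x_t < \frac{m}{g-1}$: I would spell out that equality $x_0 = \frac{m}{g-1}$ in Lemma \ref{lemi1.4} happens exactly when every digit $l_k = m$, and then $x_0 = \frac{m}{g-1}$ is an integer only if $(g-1) \mid m$, which is excluded by hypothesis; so under our hypothesis the bound is strict. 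Everything else is the short rearrangement of inequalities shown above, plus the already-proved characterization of extreme cycle points as $X_L \cap \mathbb{Z}$.
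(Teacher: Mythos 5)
Your proof is correct in mechanism and is essentially the paper's argument: if $g\nmid x_t$ then $\frac{x_t+m}{g}$ is a cycle point exceeding $x_t$ (using the strict bound $x_t<\frac{m}{g-1}$, which you justify correctly from the fact that equality forces all digits to be $m$ and hence $(g-1)\mid m$), contradicting maximality. The one point to fix is your reading of the hypothesis, which you yourself flagged as the delicate step and then resolved the wrong way. The lemma asserts divisibility of the largest point \emph{within a single} non-trivial cycle $X$ (and it is used in that per-cycle form in the proof of Theorem \ref{th2.13}), not of the global maximum over all cycles; your argument, as written, places $y=\frac{x_t+m}{g}$ in $X_L\cap\mathbb{Z}$ and hence in \emph{some} extreme cycle, which only contradicts global maximality and so only proves the weaker global statement. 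The repair is one line and is exactly what the paper does: since $g\nmid x_t$, the digit $l_t$ used at $x_t$ cannot be $0$ (that would force $g\mid x_t$), so $l_t=m$ and $y=\frac{x_t+m}{g}=x_{t+1}$ is the successor of $x_t$ \emph{in the same cycle} $X$; then $y>x_t$ contradicts maximality within $X$. With that substitution the detour through the $X_L\cap\mathbb{Z}$ characterization becomes unnecessary, and your proof coincides with the paper's, which runs the same inequality in the contrapositive direction: maximality gives $\frac{x_t+m}{g}\le x_t$, hence $x_t\ge\frac{m}{g-1}$, hence equality with the bound of Lemma \ref{lemi1.4}, hence $(g-1)\mid m$, a contradiction.
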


\begin{proof}
Assume by contradiction that $x_t$ is not divisible by $g$. Then, we know that the next cycle point is 
$$x_{t+1} = \frac{x_t + m}{g}. $$

Since $x_t$ is the largest cycle point in this cycle, we have that $\frac{x_t+m}{g}\leq x_{t}$. If $\frac{x_t+m}{g}=x_t$ then $x_t=\frac{m}{g-1}$ so $m$ is divisible by $g-1$, contradiction. Otherwise, we get that $x_t>\frac{m}{g-1}$, which contradicts Lemma \ref{lemi1.4}.
\end{proof}

\begin{lemma}\label{lem2.4.1}
If $m=g-1$, then it has the extreme cycle $\{1\}$ with digits $\{m\}$.
\end{lemma}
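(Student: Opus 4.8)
The plan is to verify directly that the singleton $\{1\}$ satisfies the defining cycle relation \eqref{eqec} with the single digit $l_0 = m$, and then to confirm the extremality condition \eqref{eq1.1}. Since the cycle here has length $r = 1$, the relation \eqref{eqec} collapses to the one equation $x_0 = \frac{x_0 + l_0}{g}$; substituting $x_0 = 1$ and $l_0 = m = g-1$ gives $\frac{1 + (g-1)}{g} = \frac{g}{g} = 1$, so the relation holds with $l_0 = m \in \{0,m\}$, and this digit is not $0$, so the cycle is non-trivial.

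For the extremality condition, I would note that since $g$ is even, $\frac{g}{2} \in \bn$, hence $e^{2\pi i \frac{g}{2}\cdot 1} = 1$ and $\left|\frac{1 + e^{2\pi i \frac{g}{2}\cdot 1}}{2}\right| = 1$, as required by \eqref{eq1.1}. Alternatively, and perhaps more cleanly, one can invoke Lemma \ref{lemi1.4}: choosing all digits $l_n = m$ in \eqref{eqi1.4.2} produces the point $\sum_{n=1}^{\infty} \frac{m}{g^n} = \frac{m}{g-1} = \frac{g-1}{g-1} = 1$, which therefore lies in $X_L \cap \bz$ and is consequently an extreme cycle point; reading off its (constant) periodic base $g$ expansion shows that the associated cycle is $\{1\}$ with digit $m$.

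There is essentially no obstacle here; the only point requiring a bit of care is to match the indexing convention of \eqref{eqec} in the degenerate case $r=1$, where the list of equations becomes the single equation $x_0 = (x_0 + l_{0})/g$. I would end by remarking that this shows $m = g-1$ is always incomplete, which is precisely why the definition of a primitive number flags $m = g-1$ as the trivial case.
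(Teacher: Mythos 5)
Your proposal is correct and takes essentially the same approach as the paper: a direct verification that $x_0=1$ with digit $l_0=m=g-1$ satisfies $x_0=(x_0+l_0)/g$, since $(1+(g-1))/g=1$. The additional check of the extremality condition \eqref{eq1.1} and the alternative via Lemma \ref{lemi1.4} are fine but not needed, as integrality of the cycle point already guarantees \eqref{eq1.1} when $g$ is even.
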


\begin{proof}
Letting $m = g-1$ and $x_0 = 1$, we get that $x_1 = \frac{1 + g-1}{g}$. So, $x_1 = 1$, and since $x_1 = x_0$, $\{1\}$ is indeed an extreme cycle for the digit $g-1.$
\end{proof}

\begin{lemma}\label{lem2.5}
If $m$ is incomplete, then any odd multiple of $m$ is also incomplete. 
\end{lemma}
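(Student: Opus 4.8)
The plan is to take a non-trivial extreme cycle for the digit set $\{0,m\}$ and scale it by the odd cofactor to manufacture a non-trivial extreme cycle for the larger digit set. Write the odd multiple as $km$ with $k$ a positive odd integer; then $km$ is again odd, so it falls under the standing hypotheses. Suppose $\{x_0,\dots,x_{r-1}\}$ is a non-trivial extreme cycle for $\{0,m\}$ with digits $l_0,\dots,l_{r-1}\in\{0,m\}$, so that $x_{j+1}=(x_j+l_j)/g$ cyclically (indices mod $r$, with $x_r=x_0$); by Lemma \ref{lemi1.4} the $x_j$ are integers, and they are not all $0$.

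First I would set $y_j:=kx_j$ for each $j$. Multiplying each cycle equation by $k$ gives $y_{j+1}=(y_j+kl_j)/g$, and since $l_j\in\{0,m\}$ we have $kl_j\in\{0,km\}$. Thus the points $y_0,\dots,y_{r-1}$ satisfy the defining relations of an extreme cycle for the digit set $\{0,km\}$, with digits $kl_j$.

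Next I would check that nothing degenerates: the $y_j$ are integers because $k$ and the $x_j$ are integers; they are not all zero because the $x_j$ are not all zero and $k\neq0$; and, since $k\neq0$, the map $x\mapsto kx$ is injective, so $\{y_0,\dots,y_{r-1}\}$ has the same cardinality as $\{x_0,\dots,x_{r-1}\}$ and in particular is not the trivial cycle $\{0\}$. (If one prefers to argue through Question \ref{q1}, note that for integer cycle points condition \eqref{eq1.1} is automatic, since $g$ is even forces $(g/2)y_j\in\bz$.) Hence $\{y_0,\dots,y_{r-1}\}$ is a non-trivial extreme cycle for $\{0,km\}$, so $km$ is incomplete.

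There is essentially no real obstacle here; the two points that need a line of care are verifying that the scaled digits $kl_j$ still lie in the admissible set $\{0,km\}$ and that the scaled cycle does not collapse to the trivial one — both immediate from $k$ being a nonzero integer.
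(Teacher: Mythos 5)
Your proof is correct and follows essentially the same route as the paper: multiply each cycle relation by the odd cofactor $k$ so that the scaled points form an extreme cycle for $\{0,km\}$. The extra checks you include (integrality, non-triviality of the scaled cycle) are sound and only make the argument slightly more careful than the paper's version.
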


\begin{proof}
The number $m$ is complete if and only if the only extreme cycle for the digit set $\{0,m\}$ is the trivial one $\{0\}$. Suppose that $m$ is incomplete, so $m$ has the non-trivial extreme cycle $\{x_0, x_1, ... , x_{r-1}\}$, where $$x_1=\frac{x_0+l_0}g,\ x_2=\frac{x_1+l_1}g,\ \dots,x_{r-1}=\frac{x_{r-2}+l_{r-2}}g,\ x_0=\frac{x_{r-1}+l_{r-1}}g.$$
Consider the extreme cycles of $\{0, km\}$, where $k$ is an odd number. Multiplying the previous expression by $k$, we get that, 
$$kx_1=\frac{kx_0+kl_0}g,\ kx_2=\frac{kx_1+kl_1}g,\ \dots,kx_{r-1}=\frac{kx_{r-2}+kl_{r-2}}g,\ kx_0=\frac{kx_{r-1}+kl_{r-1}}g.$$
Thus, we get an extreme cycle for the digit $km$. 

Hence, odd multiples of $m$ are incomplete whenever $m$ is incomplete. 
\end{proof}

\begin{lemma}\label{lem2.6}
All of the odd numbers between $1$ and $g-2$ are complete. 
\end{lemma}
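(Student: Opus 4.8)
The plan is to read off the conclusion directly from the size bound on extreme cycle points in Lemma \ref{lemi1.4}. Suppose, for contradiction, that $m$ is incomplete: then there is a non-trivial extreme cycle $\{x_0,\dots,x_{r-1}\}$ for the digit set $\{0,m\}$, with associated digits $l_0,\dots,l_{r-1}\in\{0,m\}$ not all equal to $0$. Recall (from the discussion preceding Question \ref{q2}, reproved inside Lemma \ref{lemi1.4}) that every extreme cycle point is an integer. Apply Lemma \ref{lemi1.4} to the point $x_0$: since the digits are not all zero, the periodic base-$g$ expansion \eqref{eqi1.4.1} of $x_0$ is not identically zero, so the lemma gives $0 < x_0 \le \frac{m}{g-1}$.

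Now I would invoke the hypothesis $1\le m\le g-2$. This yields $\frac{m}{g-1}\le \frac{g-2}{g-1} < 1$, because $g\ge 4$. Hence $x_0$ would be an integer with $0 < x_0 < 1$, which is impossible. Therefore no non-trivial extreme cycle exists, i.e., $m$ is complete.

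There is essentially no obstacle here; the only thing to verify is that the upper bound $\frac{m}{g-1}<1$ is strict, which is immediate from $m\le g-2<g-1$. It is worth noting in passing that the bound $g-2$ in the statement is sharp: for the next odd value $m=g-1$, Lemma \ref{lem2.4.1} exhibits the non-trivial extreme cycle $\{1\}$, so $g-1$ is incomplete. More generally, the same computation shows that any search for extreme cycles of $\{0,m\}$ may be confined to the finitely many integers in the interval $\bigl(0,\tfrac{m}{g-1}\bigr]$, a reduction that will be used repeatedly later.
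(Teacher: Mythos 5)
Your proof is correct and follows essentially the same route as the paper: both deduce from Lemma \ref{lemi1.4} that a non-trivial extreme cycle point would be an integer in $\bigl(0,\tfrac{m}{g-1}\bigr]$ with $\tfrac{m}{g-1}\le\tfrac{g-2}{g-1}<1$, which is impossible. Your version is in fact slightly more careful than the paper's (you note explicitly why $x_0>0$ for a non-trivial cycle), but the argument is the same.
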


\begin{proof}
Let $m$ be an odd number, $1\leq m\leq g-2$. Suppose $m$ is incomplete. Then, by Lemma \ref{lemi1.4}, the set $[0,\frac{m}{g-1}]\cap\bz\supset X_L\cap \bz$ contains a cycle point so it is non-empty. But $\frac{m}{g-1}<1$, and this is a contradiction. 
\end{proof}

\begin{lemma}\label{lem2.7}
Let $x_0$ be a cycle point, i.e., it has the form in \eqref{eqi1.4.1}. Suppose $x_0$ is an integer. Then $x_0$ is an extreme cycle point, i.e., all the other points in the cycle are integers. 
\end{lemma}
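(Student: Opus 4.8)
The plan is to use the cycle relations \eqref{eqec} read in reverse. Writing them as $x_{j+1} = \frac{x_j + l_j}{g}$ for $j = 0, 1, \ldots, r-1$, with indices taken modulo $r$ (so that $x_r = x_0$), I rearrange each one into
$$x_j = g\,x_{j+1} - l_j, \qquad j = 0, 1, \ldots, r-1.$$
The essential observation is that, whereas the forward relation $x_{j+1} = (x_j+l_j)/g$ involves a division by $g$ and so need not preserve integrality, the backward relation $x_j = g\,x_{j+1} - l_j$ certainly does: since $g \in \bz$ and $l_j \in \{0,m\} \subset \bz$, if $x_{j+1} \in \bz$ then $x_j \in \bz$.

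Starting from the hypothesis $x_0 \in \bz$, I then run a finite downward induction around the cycle. Taking $j = r-1$ in the backward relation gives $x_{r-1} = g\,x_0 - l_{r-1} \in \bz$; taking $j = r-2$ gives $x_{r-2} = g\,x_{r-1} - l_{r-2} \in \bz$; and continuing through $j = r-3, \ldots, 1$ yields $x_{r-1}, x_{r-2}, \ldots, x_1 \in \bz$. Hence every point of the cycle $\{x_0, x_1, \ldots, x_{r-1}\}$ is an integer, which is exactly the statement that $x_0$ is an extreme cycle point.

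I do not anticipate any genuine obstacle. (Alternatively, one could simply quote Lemma \ref{lemi1.4}: by \eqref{eqi1.4.2}, a point of the form \eqref{eqi1.4.1} with all $l_n \in \{0,m\}$ lies in the attractor $X_L$, so if it is also an integer it belongs to $X_L \cap \bz$ and is therefore an extreme cycle point.) The one thing to get right is the direction of the propagation: integrality must be pushed backward along the cycle, not forward, and this is enough precisely because the cycle returns to every one of its points after $r$ steps.
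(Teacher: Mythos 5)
Your proof is correct and is essentially the paper's own argument: both rearrange the cycle relation into $x_j = g\,x_{j+1} - l_j$ and propagate integrality backward around the cycle from $x_0$ by induction. No issues.
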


\begin{proof}
Suppose that $\{x_0, x_1, ..., x_n\}$ is a cycle for $\{0,m\}$, $l_n$ $\in$ $\{0,m\}$ and that $x_0$ $\in \bz$. Since $x_0$ is a cycle point, we know that 
$x_0 = \frac{x_n+l_n}{g}$. Then $x_n = x_0g-l_n$, so $x_n$ $\in \bz$. By induction, all points in the cycle are integers.
\end{proof}

\begin{definition}\label{def2.8}
Let $m$ be an odd natural number. We will denote by $\Z_m$ the finite ring of integers modulo $m$. We denote by $U(\Z_m)$ the multiplicative group of elements in $\Z_m$ that have a multiplicative inverse, i.e., the elements in $\bz_m$ which are relatively prime with $m$. For $a\in U(\bz_m)$, we denote by $o_a(m)$ the order of the element $a$ in the group $U(\bz_m)$. We also say that $m$ {\it has order }$o_a(m)$ (with respect to $a$). We denote by $G_{m,g}$ (or $G_m$) the group generated by $g$ in $U(\Z_m)$, that is $G_{m,g} = \{g^j(\mod m) : j = 0, 1, \dots\}.$ 
\end{definition} 

\begin{proposition}\label{pr2.14}
Assume $m>g-1$ is odd. If a coset $C$ of $G_{m,g}$ in $U(\Z_m)$ has the property that for all $x_j \in C$, $x_j < \frac{2m}{g}$, then $C$ is an extreme cycle for the digit set $\{0,m\}$. 
\end{proposition}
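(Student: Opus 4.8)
The plan is to show that each element $x_j$ of the coset $C$ is a genuine extreme cycle point, i.e.\ that the orbit of $x_j$ under the backward maps $x\mapsto gx\pmod m$ (equivalently, the maps $\sigma_0^{-1},\sigma_m^{-1}$ acting on integers) stays inside $C$ and consists of integers, and that the digits $l_j$ used in the cycle relation \eqref{eqec} are forced to be $m$ at every step. First I would fix a representative $x_j\in C$ with $0<x_j<m$ (such a representative exists since $C\subseteq U(\Z_m)$, so $\gcd(x_j,m)=1$ and in particular $x_j\not\equiv 0$); the hypothesis $x_j<\tfrac{2m}{g}$ is to be read for this choice of representatives. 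Since $x_j$ is coprime to $m$ and $m$ is odd, $x_j$ is not divisible by $m$, and by Lemma \ref{lem2.3} applied to the putative cycle relation we need $x_j\equiv 0\pmod g$ or $x_j\equiv -m\pmod g$; but the first is impossible here because $x_j\in U(\Z_m)$ forces... actually the cleaner route is to define the predecessor directly.

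The key step: given $x_j\in C$ with $0<x_j<\tfrac{2m}{g}$, set $l_j=m$ and $x_{j+1}=\dfrac{x_j+m}{g}$ provided this is an integer, and $l_j=0$, $x_{j+1}=\dfrac{x_j}{g}$ otherwise. I would argue that exactly one of these produces an integer (because $x_j$ and $x_j+m$ differ by the odd number $m$, which is coprime to the relevant power of $g$ — here one must be a little careful: $g$ need not be prime, so I would instead use Lemma \ref{lem2.3}, which tells us $x_j\equiv 0$ or $-m\pmod g$, and rule out the ambiguous case by a parity/coprimality argument, noting $\gcd(x_j,g)$ divides $\gcd(x_j,m)\cdot(\text{stuff})$). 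Whichever branch is taken, the new point satisfies $x_{j+1}\equiv g^{-1}x_j\pmod m$ (since $m\mid 0$ and $gx_{j+1}\equiv x_j+l_j\equiv x_j\pmod m$ as $l_j\in\{0,m\}$), so $x_{j+1}$ lies in the same coset $C$ of $G_{m,g}$; hence by hypothesis, after reducing $x_{j+1}$ to its representative in $(0,m)$ we again have $x_{j+1}<\tfrac{2m}{g}$. Crucially I must check $x_{j+1}$ is already in the correct range without further reduction: from $0<x_j<\tfrac{2m}{g}$ we get $0<\dfrac{x_j+m}{g}<\dfrac{2m/g+m}{g}<\dfrac{2m}{g}$ for $g\ge 4$ (since $\tfrac{2/g+1}{g}<\tfrac2g\iff 2/g+1<2\iff g>2$), and similarly $0<\dfrac{x_j}{g}<\dfrac{2m}{g^2}<\dfrac{2m}{g}$; so the backward orbit never leaves the interval $(0,\tfrac{2m}{g})$ and stays in $C$. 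Since $C$ is finite, the orbit is eventually periodic, and by Lemma \ref{lem2.4.1}/Lemma \ref{lemi1.4}-type reasoning (all points integers, the cycle relation \eqref{eqec} satisfied) this periodic part is an extreme cycle; finally, since $C$ is a single coset under the $G_{m,g}$-action and the backward map is multiplication by $g^{-1}$ which acts transitively on $C$, the whole coset $C$ is the cycle.

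The main obstacle I anticipate is the step deciding which digit $l_j\in\{0,m\}$ to use and proving it is \emph{uniquely} determined, i.e.\ that exactly one of $x_j$, $x_j+m$ is divisible by $g$: when $g$ is not squarefree one cannot simply say "$g$ and $m$ are coprime." The fix is to invoke Lemma \ref{lem2.3}: for $x_j\in C\subseteq U(\Z_m)$ the value $x_j\bmod g$ is pinned to $\{0,-m\}$, and one shows these two residues are distinct mod $g$ (else $m\equiv 0\pmod g$, impossible as $m$ is odd and $g$ even); then divisibility of $x_j$ by $g$ corresponds to $x_j\equiv 0$, divisibility of $x_j+m$ by $g$ corresponds to $x_j\equiv -m$, and these are mutually exclusive. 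One should also double-check that when the branch $l_j=m$, $x_{j+1}=\tfrac{x_j+m}{g}$ is taken, $x_{j+1}$ is genuinely an integer and not merely that $g\mid x_j+m$ in $\Z_m$ — but $x_j+m$ is an honest integer and $g\mid x_j+m$ as integers once $x_j\equiv -m\pmod g$, so this is fine. A secondary (routine) point is confirming $x_{j+1}>0$ strictly, which holds because $x_j\in U(\Z_m)$ with representative in $(0,m)$ is never $0$, and in the $l_j=0$ branch $x_j/g$ is a positive integer.
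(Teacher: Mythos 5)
There is a genuine gap at the heart of your argument: the claim that for each $x_j\in C$ ``exactly one of $x_j$, $x_j+m$ is divisible by $g$.'' Your parity argument correctly shows \emph{at most} one can be divisible by $g$ (their difference $m$ is odd while $g$ is even), but you never establish \emph{at least} one, and your proposed fix — invoking Lemma \ref{lem2.3} — is circular: that lemma's hypothesis is that $x_j$ is already an extreme cycle point, i.e.\ that an integer successor $x_{j+1}=(x_j+l_j)/g$ exists, which is precisely what you are trying to prove. Nothing about being a small element of a coset of $G_{m,g}$ gives you, for free, that $x_j$ reduces to $0$ or $-m$ modulo $g$.

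The repair is to reverse the order of your construction, which is what the paper does. Define $x_{j+1}$ first as the element of $C$ (reduced representative in $(0,m)$) satisfying $x_j\equiv gx_{j+1}\ (\mod m)$; such an element exists because $C$ is a coset of the group generated by $g$. By hypothesis $0<x_{j+1}<\frac{2m}{g}$, so $0<gx_{j+1}<2m$, and since $0<x_j<\frac{2m}{g}<m$, the difference $gx_{j+1}-x_j$ is a multiple of $m$ lying strictly between $-m$ and $2m$, hence equals $0$ or $m$. Thus $gx_{j+1}=x_j+l_j$ with $l_j\in\{0,m\}$, which simultaneously yields the divisibility you were missing and the cycle relation \eqref{eqec}; integrality is automatic because the $x_{j+1}$ are residues. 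The rest of your write-up (the orbit stays in $(0,\frac{2m}{g})$, the $g^{-1}$-action is transitive on the coset so the whole of $C$ is one cycle) is fine and matches the paper, but as written the proof does not close without this reordering.
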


\begin{proof}
Let $C$ be such a coset. Label the elements in $C$ such that $x_j \equiv gx_{j+1}(\mod m)$, and if $a$ is the number of elements in $G_{m,g}$, then $x_{a-1} \equiv gx_{0}(\mod m)$. Then, since $0 < x_{j+1} < \frac{2m}{g}$, we have that $0 < gx_{j+1} < 2m$. Now, since $x_j \equiv gx_{j+1}(\mod m)$, we have that $x_j = gx_{j+1} + km$, where $k \in \Z$. Consider the following possibilities for the value of $k$.
 
If $k>0$, then $ x_j=gx_{j+1}+km > m> \frac{2m}{g}$, a contradiction. 

If $k \leq -2$, then $x_j \leq gx_{j+1} - 2m \implies x_j \leq 0$, a contradiction. 

So, $k \in \{0, -1\}$, and it follows that, $x_j = gx_{j+1} - km$ for $k \in \{0,1\}$, and similarly for $x_0$ and $x_{a-1}$. Rearranging, we find that $$\frac{x_j + l_j}{g} = x_{j+1}$$ for $l_j \in \{0,m\}$, and similarly for $x_0$ and $x_{a-1}$. Since $C$ contains only integers, $C$ is an extreme cycle.
\end{proof}

\subsection{Some complete numbers}

\begin{theorem}\label{th2.12}
Let $m>g-1$ be an odd  number not divisible by $g-1$. If any of the numbers $-1(\mod m), -2(\mod m),  \dots , -g+2(\mod m)$ or $2(\mod m), 3(\mod m),  \dots , g-1(\mod m)$ is in $G_{m,g}$, then $m$ is complete. 
\end{theorem}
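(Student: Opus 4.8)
The plan is to argue by contradiction. Suppose $m$ is incomplete; then by definition there is a non-trivial extreme cycle $\{x_0,\dots,x_{r-1}\}$ with digits $l_0,\dots,l_{r-1}\in\{0,m\}$ satisfying $x_{j+1}=(x_j+l_j)/g$ for all $j$ (indices read modulo $r$). By Lemma~\ref{lemi1.4} every cycle point satisfies $0<x_j\le \frac{m}{g-1}<m$, so each $x_j$ equals its own residue modulo $m$; and since $l_j\in\{0,m\}$, the recursion gives $x_j\equiv g\,x_{j+1}\pmod m$, hence $x_j\equiv g^{\,s}x_{j+s}\pmod m$ for every $s\ge 0$. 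Write $S=\{x_0,\dots,x_{r-1}\}\subseteq\left(0,\tfrac{m}{g-1}\right]$.

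The key elementary observation is that for any $a$ with $a\equiv g^{\,s}\pmod m$, i.e.\ $a\in G_{m,g}$, the map $x\mapsto ax\bmod m$ carries $S$ into itself: from $a x_{j+s}\equiv g^{\,s}x_{j+s}\equiv x_j\pmod m$, both sides lying in $\{0,\dots,m-1\}$, we get $ax_{j+s}\bmod m=x_j\in S$, and as $s$ is fixed every element of $S$ arises as some $x_{j+s}$. In the same way, if $m-a\in G_{m,g}$ then $S$ is closed under $x\mapsto (m-a)x\bmod m$, i.e.\ under multiplication by $-a$ modulo $m$.

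Assume first the hypothesis holds with $a\in\{2,\dots,g-1\}$ and $a\in G_{m,g}$. Pick any $c\in S$. Since $c\le\frac{m}{g-1}$ and $a\le g-1$ we have $ac\le m$; but $ac=m$ is impossible, since then $ac\bmod m=0\notin S$. Hence $ac<m$, so $ac\bmod m=ac\in S$ and in particular $ac\le\frac{m}{g-1}$. Iterating with $ac$ in the role of $c$ and inducting on $k$ gives $a^{k}c\in S$, hence $a^{k}c\le\frac{m}{g-1}$, for all $k\ge 0$ --- absurd, since $a\ge 2$ forces $a^{k}c\to\infty$. In the remaining case the hypothesis holds with $a\in\{1,\dots,g-2\}$ and $m-a\in G_{m,g}$; pick $c\in S$ and put $c'=(m-a)c\bmod m\in S$, so $c'\equiv -ac\pmod m$ and hence $ac+c'\equiv 0\pmod m$. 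Because $0<ac+c'\le\frac{am}{g-1}+\frac{m}{g-1}=\frac{(a+1)m}{g-1}\le m$, we must have $ac+c'=m$, whence $ac=m-c'\ge m-\frac{m}{g-1}=\frac{(g-2)m}{g-1}$, while also $ac\le\frac{am}{g-1}$. Comparing, $g-2\le a$; together with the hypothesis $a\le g-2$ this forces $a=g-2$ and $ac=\frac{(g-2)m}{g-1}\in\bz$, so $(g-1)\mid m$, contradicting $g-1\nmid m$. Either way we reach a contradiction, so $m$ must be complete.

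I expect the only delicate point to be the second case, where one must tighten the inequalities enough to pin $ac$ to the single value $\frac{(g-2)m}{g-1}$ before invoking the non-divisibility hypothesis; one should also double-check the endpoint cases $a=g-1$ in the first case and $a\in\{1,g-2\}$ in the second. Note that the membership conditions only make sense, and can only hold, when $\gcd(g,m)=1$, since $G_{m,g}\subseteq U(\Z_m)$.
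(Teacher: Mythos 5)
Your proof is correct and follows essentially the same strategy as the paper's: multiplication by an element of $G_{m,g}$ permutes the extreme cycle modulo $m$, and the size constraint $0<x_j\le \frac{m}{g-1}$ then yields a contradiction in each of the two cases. The only differences are cosmetic --- you iterate $a^k c$ where the paper invokes the maximal cycle point, and in the negative case you isolate the boundary equality $ac=\frac{(g-2)m}{g-1}$ and then apply $g-1\nmid m$, where the paper uses the strict bound $x_0<\frac{m}{g-1}$ (coming from the same hypothesis) directly.
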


\begin{proof}
Assume by contradiction that $m$ is incomplete. Then there is a non-trivial extreme cycle $C = \{x_0, \dots, x_{r-1}\}$ for the digit set $\{0,m\}$. From the relation between the cycle points, $$x_{j+1} = \frac{x_j+l_j}{g},$$ where $l_j \in \{0,m\}$, we have that $gx_{j+1} \equiv x_j(\mod m)$. Thus, $$g^{r-k}x_0 \equiv x_k(\mod m), \text { with } k \in \{0, \dots, r\}, x_r:=x_0.$$ So, for all $k \in \mathbb{N}$, the number $g^kx_0$ is congruent modulo $m$ with an element of the extreme cycle $C$. If, as in the hypothesis, there is a number $c \in \{-1, -2, \dots, -g+2\}$ in $G_{m,g}$ such that the number $cx_0$ is congruent modulo $m$ with an element in $C$, and since $x_0$ is arbitrary in the cycle, we get that $cx_j$ is congruent to an element in $C$ for any $j$. 

In the following arguments, we use the fact that since $m$ is not divisible by $g-1$, the condition on cycle points $0<x_j \leq \frac{m}{g-1}$ implies $0<x_j<\frac{m}{g-1}$. 

If $c \in \{-1, -2, \dots, -g+2\}$, then, since $0<x_0<\frac{m}{g-1}$, we have $0>cx_0>-m$ so $$cx_0(\mod m) = m+cx_0 > m+c\frac{m}{g-1} = \frac{m(g+c-1)}{g-1} > \frac{m}{g-1},$$ a contradiction with the fact that $cx_0(\mod m)$ is a cycle point. 

For the second set $ \{2, 3, \dots, g-1\}$, by a similar argument, we have that for some $c$ in this set, $cx_j(\mod m) \in C$ for all $j$. Let $x_N$ be the largest element of the extreme cycle. Since $0 < x_N < \frac{m}{g-1}$, $0<cx_N<m$ so $cx_N(\mod m) = cx_N>x_N$, a contradiction to the maximality of $x_N$. 
\end{proof}

\begin{theorem}\label{th2.13}
Let $m>g(g-1)$ be an odd number not divisible by $g-1$. If any of the numbers $g+1(\mod m), g+2(\mod m), \dots ,$ or $g(g-1)(\mod m)$ is in $G_{m,g}$, then $m$ is complete. 
\end{theorem}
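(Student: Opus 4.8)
The plan is a proof by contradiction parallel to Theorem~\ref{th2.12}, the new ingredient being Lemma~\ref{lem2.4}, which will let us arrange that the element of $G_{m,g}$ we multiply by stays below $m$. Suppose $m$ is incomplete and fix a non-trivial extreme cycle $C=\{x_0,\dots,x_{r-1}\}$ for $\{0,m\}$. Since $g-1\nmid m$, Lemma~\ref{lemi1.4} gives $0<x_j<\frac{m}{g-1}$ for all $j$ (the inequality is strict because $\frac{m}{g-1}$ is not an integer, exactly as in the proof of Theorem~\ref{th2.12}). From $x_{j+1}=\frac{x_j+l_j}{g}$ we get $g\,x_{j+1}\equiv x_j\pmod m$, so $g^{n}x\bmod m$ again lies in $C$ for every $x\in C$ and every $n\ge0$; in particular $cx\bmod m\in C$ for each $c\in G_{m,g}$ and each $x\in C$. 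As $m>g(g-1)$, each of $g+1,\dots,g(g-1)$ equals its own residue mod $m$, so the hypothesis supplies an integer $c$ with $g+1\le c\le g(g-1)$ and $c\in G_{m,g}$.

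Next I would pass from the largest cycle point to its successor. Let $x_t=\max C$. By Lemma~\ref{lem2.4}, $g\mid x_t$; since $g\,x_{t+1}=x_t+l_t$ with $l_t\in\{0,m\}$ and $g\nmid m$ ($g$ even, $m$ odd), we must have $l_t=0$, so $x_{t+1}=x_t/g$ is again a point of $C$. The payoff is the bound $x_{t+1}=x_t/g<\frac1g\cdot\frac{m}{g-1}=\frac{m}{g(g-1)}$, whence $c\,x_{t+1}<c\cdot\frac{m}{g(g-1)}\le g(g-1)\cdot\frac{m}{g(g-1)}=m$. Thus $c\,x_{t+1}$ is a positive integer less than $m$; it is congruent mod $m$ to some point of $C$, which itself lies in $(0,m)$, so the two coincide and $c\,x_{t+1}\in C$. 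But $c\ge g+1$ and $x_{t+1}\ge1$ give $c\,x_{t+1}\ge(g+1)x_{t+1}>g\,x_{t+1}=x_t$, contradicting the maximality of $x_t$. Hence no such $C$ exists and $m$ is complete.

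The crux — the one genuine choice in the argument — is which cycle point to multiply by $c$. Hitting $x_t$ directly fails, because $c\,x_t$ may exceed $m$ and its reduction mod $m$ can fall back into $(0,x_t]$, so no contradiction emerges. Replacing $x_t$ by its forward image $x_t/g$ (a cycle point precisely because Lemma~\ref{lem2.4} yields $g\mid x_t$) supplies the extra factor $g$ that keeps $c\cdot(x_t/g)<m$, after which the maximality contradiction is automatic; everything else is the bounded-cycle-point bookkeeping already used in Theorem~\ref{th2.12}.
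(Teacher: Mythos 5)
Your proof is correct and follows essentially the same route as the paper's: reduce to the largest cycle point $x_t$, invoke Lemma \ref{lem2.4} to pass to its successor $x_{t+1}=x_t/g<\frac{m}{g(g-1)}$, and observe that $cx_{t+1}$ is then a genuine cycle point exceeding $x_t$. The only difference is that you spell out why the digit at $x_t$ must be $0$ (a detail the paper leaves implicit), which is a harmless elaboration.
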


\begin{proof}
Assume by contradiction that $m$ is incomplete. Then there is a non-trivial extreme cycle $C = \{x_0, \dots, x_{r-1}\}$ for the digit set $\{0,m\}$. As in the proof of Theorem \ref{th2.12}, for all $k \in \N$, the number $g^{k}x_0$ is congruent modulo $m$ with an element of the extreme cycle $C$. But then, the hypothesis implies that there is a number $c \in \{g+1,g+2, \dots , g(g-1)\}$ in $G_{m,g}$ such that the number $cx_0$ is congruent modulo $m$ with an element in $C$, and since $x_0$ is arbitrary in the cycle, we get that $cx_j$ is congruent to an element in $C$ for any $j$. 

In the following arguments we use the fact that since $m$ is not divisible by $g-1$, the condition on the cycle points $0< x_j \leq \frac{m}{g-1}$ implies $0 < x_j < \frac{m}{g-1}$. Let $x_t$ be the largest element in the extreme cycle. We have $$0 < x_t < \frac{m}{g-1}.$$
By Lemma \ref{lem2.4}, $x_t$ is divisible by $g$. Therefore, dividing by $g$, we get the next element in the extreme cycle, called $x_N$, and we have $$x_N < \frac{m}{g(g-1)}.$$
For $c \in \{g+1, g+2, \dots , g(g-1)\}$,  $x_t=gx_N < cx_N < m$, so $cx_N(\mod m) = cx_N$ is a point in $C$ bigger than $x_t$, a contradiction to the maximality of $x_t$. 
\end{proof}

\begin{corollary}

For $n \geq 1$, the numbers $g^n+1, g^n+3, \dots, g^n+(g-1)$ are complete. 
For $n \geq 2$, the numbers $g^n-3, g^n-5, \dots, g^n-(g-1)$ are complete.
For $n \geq 3$, the numbers $g^n-(g+1), g^n-(g+3), \dots, g^n-(g(g-1)-1)$ are complete.  
\end{corollary}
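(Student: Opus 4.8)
The three families are all instances of Theorems~\ref{th2.12} and~\ref{th2.13}, obtained by computing $g^{n}$ modulo $m$; in each case $m=g^{n}\pm(\text{odd number})$ is odd, and the only points that need attention are the size hypothesis and the hypothesis $g-1\nmid m$, both of which are routine since $g\equiv 1\pmod{g-1}$. For the first family write $m=g^{n}+j$ with $j\in\{1,3,\dots,g-1\}$. Then $m\equiv j+1\pmod{g-1}$, and since no odd $j\in[1,g-1]$ is $\equiv-1\pmod{g-1}$ we have $g-1\nmid m$; also $m>g-1$. From $g^{n}\equiv-j\pmod m$ the residue $-j$ lies in $G_{m,g}$, and for $j\le g-2$ (i.e.\ $j\in\{1,3,\dots,g-3\}$) this is one of $-1,-2,\dots,-g+2$ modulo $m$, so Theorem~\ref{th2.12} gives that $m$ is complete.

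This leaves only $j=g-1$, i.e.\ $m=g^{n}+g-1$, in the first family. Here I would use instead the identity $g\,(g^{n-1}+1)=g^{n}+g=m+1$, which gives $g^{-1}\equiv g^{n-1}+1\pmod m$, so $g^{n-1}+1\in G_{m,g}$. For $n=1$ this element is $2\in\{2,\dots,g-1\}$ and Theorem~\ref{th2.12} applies; for $n=2$ it is $g+1\in\{g+1,\dots,g(g-1)\}$ and Theorem~\ref{th2.13} applies; the side hypotheses are checked as before. For $n\ge 3$, however, $g^{n-1}+1$ exceeds $g(g-1)$ and fits neither range, and this is the step I expect to be the main obstacle. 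For it I would argue directly, in the spirit of the proofs of Theorems~\ref{th2.12} and~\ref{th2.13}: from $g^{n}\equiv 1-g\pmod m$ one sees that every non-trivial extreme cycle $C$ satisfies $m-(g-1)x\pmod m\in C$ for each $x\in C$, and combined with $0<x<\frac{m}{g-1}$ (Lemma~\ref{lemi1.4}, using $g-1\nmid m$) this confines $C$ to $\bigl(\frac{m(g-2)}{(g-1)^{2}},\frac{m}{g-1}\bigr)$; iterating $x\mapsto m-(g-1)x\pmod m$ shrinks this interval by a further factor $g-1$ at each step, eventually to length $<1$, whence $C$ would be a single fixed point $\frac{l}{g-1}$ with $l\in\{0,m\}$, impossible for a non-trivial cycle when $g-1\nmid m$.

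For the second family, $m=g^{n}-j$ with $j\in\{3,5,\dots,g-1\}$ and $n\ge 2$, one has $g^{n}\equiv j\pmod m$ with $j\in\{2,\dots,g-1\}$, $m\ge g^{2}-(g-1)>g-1$, and $m\equiv 1-j\pmod{g-1}$ is nonzero for such $j$, so $g-1\nmid m$; Theorem~\ref{th2.12} applies. For the third family, $m=g^{n}-j$ with $j\in\{g+1,g+3,\dots,g(g-1)-1\}$ and $n\ge 3$, one has $g^{n}\equiv j\pmod m$ with $j\in\{g+1,\dots,g(g-1)\}$ and $m>g(g-1)$, so Theorem~\ref{th2.13} applies provided $g-1\nmid m$. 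That proviso is genuinely necessary: $g-1\mid m$ exactly when $j\equiv 1\pmod{g-1}$, and then $m$ is an odd multiple of $g-1$, hence incomplete by Lemmas~\ref{lem2.4.1} and~\ref{lem2.5}; so the third family should be read with $m$ restricted to be coprime to $g-1$. With that caveat the corollary follows from Theorems~\ref{th2.12} and~\ref{th2.13}.
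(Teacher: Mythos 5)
Your proposal follows the same basic route as the paper's own proof---compute $g^{n}$ modulo $m$ and invoke Theorems \ref{th2.12} and \ref{th2.13}---but it is noticeably more careful, and both of the places where you go beyond that template are places where the paper's argument is actually deficient. First, for $m=g^{n}+(g-1)$ the paper just writes ``similarly,'' but as you observe $g^{n}\equiv-(g-1)\pmod{m}$ and $-(g-1)$ falls outside the range $\{-1,\dots,-(g-2)\}$ of Theorem \ref{th2.12}, so the quoted argument genuinely does not cover this digit. Your repair is correct: $g\bigl(g^{n-1}+1\bigr)=m+1$ puts $g^{n-1}+1\equiv g^{-1}$ in $G_{m,g}$, which settles $n\le 2$ via Theorems \ref{th2.12} and \ref{th2.13}, and your contraction argument for $n\ge 3$ is sound --- since $\psi(x)=m-(g-1)x$ maps every non-trivial cycle $C$ into itself and $C\subseteq\bigl(0,\tfrac{m}{g-1}\bigr)$, pulling this bound back through $\psi$ repeatedly confines $C$ to intervals of length $\tfrac{m}{(g-1)^{k}}$, forcing $|C|=1$ and hence $x_{0}=l/(g-1)\notin\Z$, a contradiction. (In fact that argument works for all $n\ge 1$, so the three-way case split is optional.)

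Second, you are right that the third family is false as stated. For $j\equiv 1\pmod{g-1}$ --- for instance $j=2g-1$, which is odd and lies in $\{g+1,\dots,g(g-1)-1\}$ --- one has $g-1\mid g^{n}-j$, so $g^{n}-j$ is an odd multiple of the incomplete number $g-1$ and is itself incomplete by Lemmas \ref{lem2.4.1} and \ref{lem2.5}; concretely, for $g=4$, $4^{3}-7=57=3\cdot 19$ has the non-trivial extreme cycle $\{19\}$. The paper verifies $g-1\nmid m$ only for $j=g+1$ and asserts the rest ``similarly,'' which fails exactly for these $j$; your restriction to $j\not\equiv 1\pmod{g-1}$ is the correct reading of the statement. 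The remaining verifications in your write-up (size hypotheses, non-divisibility by $g-1$ in the first two families) all check out.
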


\begin{proof}
Let $n \geq 1$ and $m = g^n+1$. Then $g\equiv 1(\mod (g-1))$ so $g^n\equiv 1(\mod (g-1))$, so $g^n+1\equiv 2(\mod (g-1))$ so $m$ is not divisible by $g-1$. Then $g^n \equiv -1(\mod m)$. Since $g^n \in G_{m,g}$, by Theorem \ref{th2.12}, $-1 \in G_{m,g}$, so $m$ is complete. Similarly for $g^n+3, g^n+5, \dots, g^n+(g-1)$. 

Let $n \geq 2$ and $m = g^n-3$. Then $g^n-3\equiv -2\equiv g-3(\mod (g-1))$ so $m$ is not divisible by $g-1$. Also $g^n \equiv 3(\mod m)$. Since $g^n \in G_{m,g}$, by Theorem \ref{th2.12}, $3 \in G_{m,g}$, so $m$ is complete. Similarly for $g^n-5, g^n-7, \dots, g^n-(g-1)$.

Let $n \geq 3$ and $m = g^n-(g+1)$. Then $g^n-(g+1)\equiv -g(\mod(g-1))$ so $m$ is not divisible by $g-1$. Also $g^n \equiv (g+1)(\mod m)$. Since $g^n \in G_{m,g}$, by Theorem \ref{th2.13} $(g+1) \in G_{m,g}$, so $m$ is complete. Similarly for $g^n-(g+3), g^n-(g+5), \dots, g^n-(g(g-1)-1)$.
\end{proof}

\begin{theorem}\label{th2.18}
If $p$ is a prime number, $p>g-1$ and $n \in \N$, then $p^n$ is complete whenever the order of $g$, $o_g(p)$ is even. Otherwise, $p^n$ is complete provided that $g$ is a perfect square.
\end{theorem}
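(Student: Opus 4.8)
The plan is to deduce everything from Theorem \ref{th2.12} by exhibiting, in each case, a small integer $c$ whose residue modulo $p^n$ lies in $G_{p^n,g}$ and which belongs to $\{-1,-2,\dots,-g+2\}$ or to $\{2,3,\dots,g-1\}$. First I would check that Theorem \ref{th2.12} applies to $m=p^n$: since $p$ is an odd prime with $p>g-1\geq 3$ we get $p\geq 5$, so $m=p^n$ is odd and $m>g-1$; and since $1<g-1<p$, the number $g-1$ is neither $1$ nor a power of $p$, hence $g-1\nmid p^n$. (Also $p>g$, since $p$ is odd and $g$ even, so $g$ is a unit mod $p^n$ and $G_{p^n,g}$ is defined.)

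The one auxiliary fact I would isolate is that $o_g(p^n)$ has the same parity as $o_g(p)$. This follows from the reduction map $\varphi\colon U(\Z_{p^n})\to U(\Z_p)$, which is onto with kernel of order $p^{n-1}$: restricting $\varphi$ to the cyclic group $G_{p^n,g}$ gives a surjection onto $G_{p,g}$ with kernel $N=G_{p^n,g}\cap\ker\varphi$, so $o_g(p^n)=o_g(p)\cdot|N|$ with $|N|$ dividing $p^{n-1}$, hence $|N|$ odd.

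\emph{Case 1: $o_g(p)$ even.} Then $o_g(p^n)$ is even, so the cyclic group $G_{p^n,g}$ contains an element of order $2$. Since $p$ is odd, $U(\Z_{p^n})$ is cyclic of even order and its unique element of order $2$ is $-1$; therefore $-1\in G_{p^n,g}$. As $g\geq4$, $-1$ belongs to $\{-1,-2,\dots,-g+2\}$, so Theorem \ref{th2.12} gives that $p^n$ is complete. \emph{Case 2: $o_g(p)$ odd and $g=h^2$.} Then $o_g(p^n)$ is odd, so $t:=\tfrac12\bigl(o_g(p^n)+1\bigr)$ is a positive integer and $(g^t)^2=g^{o_g(p^n)+1}\equiv g\pmod{p^n}$; thus $g^t$ is a square root of $g=h^2$ modulo $p^n$. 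Because $p$ is odd and $0<h\leq g-2<p$, the congruence $x^2\equiv h^2\pmod{p^n}$ has exactly the solutions $x\equiv\pm h$, so $g^t\equiv h$ or $g^t\equiv -h\pmod{p^n}$. Since $g\geq4$ forces $2\leq h\leq g-2$, in the first case $h\in\{2,\dots,g-1\}$ lies in $G_{p^n,g}$, and in the second case $-h$, which lies in $\{-1,\dots,-g+2\}$ modulo $p^n$, lies in $G_{p^n,g}$; either way Theorem \ref{th2.12} gives that $p^n$ is complete.

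The inequalities used ($p\geq5$, $g-1\nmid p^n$, $2\leq h\leq g-2$, $h\not\equiv -h\pmod{p^n}$) are routine. The step that needs genuine care is identifying the square root $g^t$ as $\pm\sqrt g$ rather than some unrelated residue: this is precisely where the hypothesis ``$g$ is a perfect square'' enters, and it rests on $U(\Z_{p^n})$ being cyclic (so $x^2\equiv g$ has at most two solutions) together with $p$ being odd. I do not anticipate a deeper obstacle; the work is mainly in packaging the order-of-$g$ information correctly so that Theorem \ref{th2.12} can be invoked.
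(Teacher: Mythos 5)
Your proposal is correct and follows essentially the same route as the paper: both cases reduce to Theorem \ref{th2.12} via the elements $g^{a/2}$ (giving $-1$) and $g^{(a+1)/2}$ (giving $\pm\sqrt g$), with the parity of $o_g(p^n)$ controlled by its relation to $o_g(p)$ (the paper cites Proposition \ref{prop2.19} for this, you give a direct group-theoretic argument). Your write-up is in fact a bit more careful than the paper's in verifying the hypotheses of Theorem \ref{th2.12} (that $g-1\nmid p^n$ and that $\sqrt g$, $-\sqrt g$ land in the required ranges), but the underlying idea is identical.
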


\begin{proof}
If $o_g(p)$ is even, then $o_g(p^n)$ is even for all $n\geq 1$, see Proposition \ref{prop2.19} below. 
Since $p$ is prime and greater than $g-1$, we have that $p$ and $g$ are relatively prime. It is well known that the equation $x^2 \equiv b(\mod p^n)$ has zero or two solutions. Let $a:=o_g(p^n)$. If $a$ is even, then we have $(g^{\frac{a}{2}})^2 \equiv 1(\mod p^n)$ so $(g^{\frac{a}{2}}) \equiv \pm 1(\mod p^n)$. Since $(g^{\frac{a}{2}}) \neq 1(\mod p^n)$ we get that $(g^{\frac{a}{2}}) \equiv -1(\mod p^n)$. The result follows from Theorem \ref{th2.12}. If $g$ is a perfect square and $a$ is odd, then $(g^{\frac{a+1}{2}})^2 \equiv g(\mod p^n)$. Therefore $(g^{\frac{a+1}{2}}) \equiv \pm \sqrt g (\mod p^n)$. If $g$ is a perfect square, $\sqrt g$ or $-\sqrt g$ is in $G_{m,g}$ and the result again follows from Theorem \ref{th2.12}.

\end{proof}

\begin{remark}\label{rem2.14}
There are prime numbers which are not complete. 
Consider $g=6$ and the prime number $ p = 55987$. Then $6^7 \equiv 1(\mod 55987)$, so the order of $6$ in $\bz_p^\times$, $o_6(55987)=7$ is odd. An extreme cycle for this digit set is $$\{311, 9383, 10895, 11147, 11189, 11196, 1866\},$$ so we see that $p$ is incomplete. 
\end{remark}
\subsection{Primitive numbers}

\begin{proposition}
A number $m$ is incomplete if and only if it is divisible by a primitive number. 
\end{proposition}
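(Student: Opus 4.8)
The plan is to prove the two implications separately, using Lemma~\ref{lem2.5} for one direction and a minimality argument for the other.

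For the ``if'' direction, suppose $m$ is divisible by a primitive number $p$. By the definition of primitivity, $p$ is incomplete. Since $m$ is odd and $p \mid m$, the number $m$ is an odd multiple of $p$, so Lemma~\ref{lem2.5} immediately yields that $m$ is incomplete. This half requires essentially no work; it is just an application of the fact that incompleteness propagates upward along odd multiples.

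For the ``only if'' direction, suppose $m$ is incomplete. Consider the finite, nonempty set $D$ of positive divisors $d$ of $m$ that are incomplete (it is nonempty because $m \in D$), and let $p$ be a minimal element of $D$, say the smallest. Then $p$ is incomplete by construction. Moreover, any proper divisor $d$ of $p$ also divides $m$ (since $d \mid p$ and $p \mid m$) and satisfies $d < p$, so by minimality $d \notin D$; that is, $d$ is complete. Hence $p$ satisfies both defining conditions of a primitive number, so $p$ is primitive and $m$ is divisible by $p$.

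I do not expect any substantial obstacle here: the only bookkeeping point is that every divisor of $m$ is odd (automatic, since $m$ is odd), so that the notions ``complete''/``incomplete'' and Lemma~\ref{lem2.5} all apply to the divisors in question. The real content of the proposition is carried by Lemma~\ref{lem2.5}, while the definition of ``primitive'' is arranged precisely so that a minimal incomplete divisor of any incomplete $m$ is automatically primitive.
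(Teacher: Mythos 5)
Your proof is correct and follows essentially the same route as the paper: the ``if'' direction is the same application of Lemma~\ref{lem2.5}, and your minimal-incomplete-divisor argument is just a cleaner, well-ordering phrasing of the paper's iterative descent through proper divisors.
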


\begin{proof}
Suppose that $m$ is incomplete. Then either $m$ is primitive, and hence divisible by a primitive number, or $m$ is not primitive. If $m$ is incomplete and not primitive, then a proper divisor $d$ of $m$ must be incomplete. Similarly, either $d$ is primitive, or a proper divisor of $d$ is incomplete. Continuing this process until we run out of proper divisors, we find that a proper divisor of $m$ must be primitive.  

On the other hand, suppose that $m$ is divisible by a primitive number $p$. Since $p$ is incomplete, by Lemma \ref{lem2.5}, all odd multiples of $p$ are also incomplete, so $m$ is incomplete. 
\end{proof}

\begin{lemma}\label{lem2.8}
If $m$ is a primitive number for $g$, then $m$ and $g$ are relatively prime.
\end{lemma}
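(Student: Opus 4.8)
The plan is to argue by contradiction. If $m$ and $g$ had a common factor $q>1$, I would take a non-trivial extreme cycle for the digit set $\{0,m\}$, observe that every cycle point is forced to be divisible by $q$, and then divide the whole cycle (and its digits) by $q$ to obtain a non-trivial extreme cycle for the digit set $\{0,m/q\}$. Since $q>1$, the number $m/q$ is a proper (odd) divisor of $m$, so this contradicts the primitivity of $m$.

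Concretely, suppose $\gcd(m,g)=q>1$; one may take $q$ prime or simply $q=\gcd(m,g)$, it makes no difference. Because $m$ is primitive it is incomplete, so there is a non-trivial extreme cycle $C=\{x_0,\dots,x_{r-1}\}$ for $\{0,m\}$ with digits $l_0,\dots,l_{r-1}\in\{0,m\}$, not all zero, satisfying $g x_{j+1}=x_j+l_j$ (indices read modulo $r$). Since $l_j\in\{0,m\}$ and $q\mid m$, each digit $l_j$ is divisible by $q$. Reducing the relation $g x_{j+1}=x_j+l_j$ modulo $q$ and using $q\mid g$ gives $x_j\equiv 0\pmod q$ for every $j$. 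Hence I can write $x_j=q y_j$ with $y_j\in\Z$, and $l_j=q k_j$ with $k_j\in\{0,m/q\}$.

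Substituting $x_j=qy_j$ and $l_j=qk_j$ into $g x_{j+1}=x_j+l_j$ and cancelling $q$ yields $g y_{j+1}=y_j+k_j$, so $\{y_0,\dots,y_{r-1}\}$ is a cycle of integers for the digit set $\{0,m/q\}$ whose digits $k_j$ are not all zero (if $l_{j_0}=m$ then $k_{j_0}=m/q\neq 0$). By Lemma \ref{lem2.7} this is an extreme cycle, and it is non-trivial: a cycle with all digits $0$ satisfies $g y_{j+1}=y_j$, hence $y_0=g^r y_0$ and therefore $y_0=\cdots=y_{r-1}=0$. Thus $m/q$ is incomplete. But $q>1$ makes $m/q$ a proper divisor of $m$, and it is odd because $m$ is, which contradicts the definition of a primitive number. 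Therefore $\gcd(m,g)=1$.

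There is no substantial obstacle in this argument; the only points that require a little care are checking that the rescaled cycle remains non-trivial (tracked via the digits $k_j$) and that $m/q$ is a legitimate odd proper divisor to which the primitivity hypothesis applies (which is exactly what $q>1$ and the parity of $m$ guarantee).
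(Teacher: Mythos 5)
Your proof is correct and follows essentially the same strategy as the paper: show every point of a non-trivial extreme cycle is divisible by the common factor of $m$ and $g$, divide through to produce a non-trivial extreme cycle for $\{0,m/q\}$, and contradict primitivity. The only (harmless) difference is that you establish the divisibility of the cycle points by reducing the recursion $gx_{j+1}=x_j+l_j$ modulo $q$, whereas the paper reads it off the closed form $x_0=\bigl(\sum_k g^k l_k\bigr)/(g^r-1)$ together with $\gcd(d,g^r-1)=1$.
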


\begin{proof}
Suppose that $m$ is a primitive number and that $\gcd(m,g) = d$, with $d>1$. We know by Lemma \ref{lemi1.4} that there is an extreme cycle point in $\bz$, $x_0=\frac{g^{r-1}l_{r-1}+g^{r-2}l_{r-2}+\dots+gl_{1}+l_{0}}{g^r-1}$, with $l_k \in \{0,m\}$. Since each $l_k$ is either $0$ or $m$, where $m$ is divisible by $d$, and since $g^r-1$ is not divisible by any of the prime factors of $d$, we have that $x_0$ is also divisible by $d$. Dividing everything by $d$ we get that $x_0/d$ is an extreme cycle for $\{0,m/d\}$. But $m/d$ is complete, because $m$ is primitive, a contradiction. Thus $m$ and $g$ are relatively prime.
\end{proof}

\begin{theorem}\label{th2.17}
There are infinitely many primitive numbers. 
\end{theorem}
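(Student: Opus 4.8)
The plan is to exhibit an infinite family of incomplete numbers that cannot all be divisible by members of a fixed finite set of primitives, and then invoke the Proposition that every incomplete number is divisible by a primitive number. The family I would use is the family of base-$g$ repunits $R_r := 1+g+g^2+\cdots+g^{r-1} = \frac{g^r-1}{g-1}$. \emph{First I would show that $R_r$ is incomplete for every $r \ge g-1$.} Take $m = R_r$ and consider the cycle point whose periodic base-$g$ expansion has digits $l_j = m$ for $0 \le j \le g-2$ and $l_j = 0$ for $g-1 \le j \le r-1$. By the formula in Lemma~\ref{lemi1.4},
\[
x_0 = \frac{m\,R_{g-1}}{g^r-1} = \frac{R_r\,R_{g-1}}{(g-1)R_r} = \frac{R_{g-1}}{g-1},
\]
which is an integer since $R_{g-1} \equiv g-1 \equiv 0 \pmod{g-1}$, and $0 < x_0 \le \frac{R_r}{g-1} = \frac{m}{g-1}$ because $r \ge g-1$. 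Being an integer cycle point for $\{0,m\}$, $x_0$ is an extreme cycle point by Lemma~\ref{lem2.7}, and it is non-trivial; hence $R_r$ is incomplete.

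\emph{Next, suppose for contradiction that $p_1,\dots,p_k$ are all the primitive numbers.} By Lemma~\ref{lem2.8} each $p_i$ is coprime to $g$, so $o_g(p_i)$ is defined, and I would verify that
\[
\{\, r \ge 1 : p_i \mid R_r \,\} = \{\, r \ge 1 : \ell_i \mid r \,\}, \qquad \ell_i := \begin{cases} o_g(p_i), & p_i \nmid g-1,\\ g-1, & p_i \mid g-1, \end{cases}
\]
with $\ell_i \ge 2$ in either case. In the first case one uses $g^r-1 = (g-1)R_r$ together with $\gcd(p_i, g-1)=1$ to get $p_i \mid R_r \iff p_i \mid g^r-1 \iff o_g(p_i) \mid r$, and $o_g(p_i)=1$ is impossible as it would force $p_i \mid g-1$. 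In the second case, $p_i \mid g-1$ forces $p_i = g-1$: every proper divisor of $g-1$ is odd and lies in $[1,g-2]$, hence is complete by Lemma~\ref{lem2.6} and not primitive; then $R_r \equiv r \pmod{g-1}$ gives the displayed description with $\ell_i = g-1 \ge 3$.

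\emph{Finally, I would pick a prime $q > \max(g-1,\ell_1,\dots,\ell_k)$.} Then $\ell_i \nmid q$ for every $i$ (since $q$ is prime and $1 < \ell_i < q$), so $p_i \nmid R_q$ for every $i$; yet $R_q$ is incomplete by the first step (as $q \ge g-1$), so by the Proposition it must be divisible by one of $p_1,\dots,p_k$ --- a contradiction. Hence there are infinitely many primitive numbers. I expect the only delicate point to be the bookkeeping in the second step: pinning down exactly which indices $r$ satisfy $p_i \mid R_r$, handling the degenerate case $p_i \mid g-1$ (where Lemma~\ref{lem2.6} is essential), and confirming each modulus $\ell_i$ is genuinely at least $2$, which is what lets a sufficiently large prime escape every one of these congruence classes. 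The first step is routine once the right digit pattern is written down, and the last step is immediate.
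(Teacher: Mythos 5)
Your proof is correct and follows essentially the same route as the paper: both construct the repunit $R_r=\frac{g^r-1}{g-1}$, show it is incomplete via the cycle with $g-1$ digits equal to $m$ followed by zeros, and contradict the proposition that every incomplete number has a primitive divisor --- the only difference being that you dodge the finitely many assumed primitives by taking $r$ to be a large prime, whereas the paper takes $r=n+1$ with $n$ a common multiple of $o_g((g-1)^2),o_g(m_1),\dots,o_g(m_s)$. One small caution: your claimed equality $\{r:p_i\mid R_r\}=\{r:\ell_i\mid r\}$ leans on ``$p_i\nmid g-1\Rightarrow\gcd(p_i,g-1)=1$,'' which can fail because primitive numbers need not be prime (e.g.\ $455=5\cdot7\cdot13$ for $g=4$); however, only the inclusion $p_i\mid R_q\Rightarrow o_g(p_i)\mid q$, which needs no coprimality with $g-1$, is actually used in your final step, so the argument stands.
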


\begin{proof}
Suppose there are only finitely many primitive numbers and let $m_1,\dots, m_s$ be the ones bigger than $g-1$. By Lemma \ref{lem2.8}, the numbers $m_i$ are relatively prime with $g$ so the order $o_g(m_i)$ of $g$ in $U(\bz_{m_i})$ is well defined. Let $n$ be a common multiple of $o_g((g-1)^2)$, $o_g(m_1),\dots, o_g(m_s)$, larger than $g-1$. 

Then $g^{n+1}-1\equiv g-1\mod((g-1)^2,m_1,\dots,m_s)$. Let $m=\frac{g^{n+1}-1}{g-1}$. This is an odd number. We have that $m$ is not divisible by $g-1,m_1,\dots$ or $ m_s$, otherwise $g^{n+1}-1$ is divisible by $(g-1)^2$, $m_1,\dots $ or  $ m_s$. 
Consider the cycle point $x_0$ with digits $l_{0}=m,\dots, l_{g-2}=m, l_{g-1}=0,\dots, l_n=0$, as in Lemma \ref{lemi1.4}.  Then 
$$x_0=\frac{m(1+g+\dots+g^{g-2})}{g^{n+1}-1}=\frac{1+g+\dots+g^{g-2}}{g-1}$$
But $g\equiv 1(\mod (g-1))$ so $1+g+\dots+g^{g-2}\equiv \underbrace{1+1+\dots+1}_{\mbox{$g-1$ times}}\equiv g-1\equiv 0 (\mod (g-1))$. So $x_0\in \bz$. With Lemma \ref{lem2.7}, it follows that $m$ is incomplete, so it is divisible by a primitive number, contradiction. 
\end{proof}

\subsection{Properties of the order of a number}\label{sec2.4}

\begin{definition}
For a prime number $p \geq 3$, we denote by $\iota_g(p)$ the largest number $l$ such that $o_g(p^l) = o_g(p)$. We say that $p$ is $simple$ if $o_g(p) < o_g(p^2)$, i.e., $\iota_g(p) = 1$. 
\end{definition}

\begin{proposition}\label{prop2.17}
Let $m$ and $n$ be relatively prime odd integers. Then $$o_g(mn) = \lcm(o_g(m),o_g(n)).$$
\end{proposition}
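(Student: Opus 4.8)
The plan is to reduce the claim to the elementary divisibility fact that, for coprime $m$ and $n$ and any integer $N$, one has $mn\mid N$ precisely when $m\mid N$ and $n\mid N$; I would apply it to $N=g^k-1$.

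First I would note that the symbol $o_g(mn)$ only makes sense when $g\in U(\Z_{mn})$, i.e. $\gcd(g,mn)=1$, and since $m$ and $n$ both divide $mn$ this forces $\gcd(g,m)=\gcd(g,n)=1$, so $o_g(m)$ and $o_g(n)$ are defined as well. Set $a=o_g(m)$, $b=o_g(n)$, $d=o_g(mn)$ and $\ell=\lcm(a,b)$. The goal is to show $d=\ell$.

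Next, for an arbitrary $k\in\bn$ I would chase the equivalences
\[
g^k\equiv 1\,(\mod mn)\iff mn\mid g^k-1\iff \bigl(m\mid g^k-1\ \text{and}\ n\mid g^k-1\bigr)\iff \bigl(a\mid k\ \text{and}\ b\mid k\bigr)\iff \ell\mid k,
\]
where the second equivalence uses $\gcd(m,n)=1$, the third is the standard fact that in the finite group $U(\Z_m)$ one has $g^k\equiv 1\,(\mod m)$ iff $o_g(m)\mid k$ (applied to both $m$ and $n$), and the last is the definition of $\lcm$. Taking $k=d$ in this chain gives $\ell\mid d$; taking $k=\ell$ gives $g^\ell\equiv 1\,(\mod mn)$, hence $d\mid\ell$. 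Therefore $d=\ell$, as desired.

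I do not expect any genuine obstacle here: this is the familiar statement that, under the Chinese Remainder isomorphism $U(\Z_{mn})\cong U(\Z_m)\times U(\Z_n)$ sending $g$ to $(g,g)$, the order of an element of a direct product is the least common multiple of the orders of its components. The only point deserving a line of justification is the order lemma $g^k\equiv 1\,(\mod m)\Leftrightarrow o_g(m)\mid k$, which follows from division with remainder on the exponent together with the minimality of $o_g(m)$.
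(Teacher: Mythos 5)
Your proof is correct and follows essentially the same route as the paper: both reduce $g^k\equiv 1\ (\mod mn)$ to the simultaneous congruences modulo $m$ and $n$ via coprimality, and then identify the smallest such exponent with $\lcm(o_g(m),o_g(n))$ using the order-divisibility lemma. Your version merely spells out the divisibility chain more explicitly than the paper does.
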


\begin{proof}
We have that $a = o_g(mn)$ is the smallest integer such that $g^a \equiv 1(\mod mn)$. So $a$ is the smallest integer such that $g^a \equiv 1(\mod m)$ and $g^a \equiv 1(\mod n)$, which means that $a$ is the smallest integer that is divisible by $o_g(m)$ and $o_g(n)$ so it is the lowest common multiple of these two numbers. 
\end{proof}

\begin{lemma}\label{lemi2.18}
Let $p$ be an odd prime number relatively prime with $g$. Then $o_g(p^l) \leq o_g(p^{l+1})$.
\end{lemma}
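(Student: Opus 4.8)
The plan is to exploit the canonical reduction homomorphism between unit groups. First I would note that, since $p$ is an odd prime relatively prime with $g$, the element $g$ is a unit both in $\bz_{p^l}$ and in $\bz_{p^{l+1}}$, so the two quantities $o_g(p^l)$ and $o_g(p^{l+1})$ are both well defined; this is the only place the coprimality hypothesis is used.

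Next, I would set $a = o_g(p^{l+1})$, so that $g^a \equiv 1 \pmod{p^{l+1}}$ by definition of the order. Since $p^l$ divides $p^{l+1}$, this congruence may be reduced modulo $p^l$, giving $g^a \equiv 1 \pmod{p^l}$. By the standard minimality property of the order — namely that $g^n \equiv 1 \pmod{p^l}$ holds if and only if $o_g(p^l) \mid n$ — I conclude that $o_g(p^l)$ divides $a = o_g(p^{l+1})$, and in particular $o_g(p^l) \leq o_g(p^{l+1})$, which is the assertion. Equivalently, one may phrase the same argument structurally: the ring surjection $\bz_{p^{l+1}} \twoheadrightarrow \bz_{p^l}$ restricts to a group homomorphism $U(\bz_{p^{l+1}}) \to U(\bz_{p^l})$ carrying the class of $g$ to the class of $g$, and under any group homomorphism the order of the image divides the order of the source element.

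I do not expect any genuine obstacle here: once both orders are seen to make sense, this is a one-line divisibility observation. The only points needing (minor) care are invoking coprimality to guarantee $g \in U(\bz_{p^{l+1}})$ and citing the elementary characterization of the order of a group element. I would also remark that this lemma is really the base case of an induction that yields the sharper statement $o_g(p^{l+1}) \in \{\, o_g(p^l),\ p\cdot o_g(p^l)\,\}$, but only the inequality is required at this stage.
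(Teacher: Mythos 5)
Your proof is correct and is essentially the same argument as the paper's: both reduce the congruence $g^{o_g(p^{l+1})}\equiv 1\pmod{p^{l+1}}$ modulo $p^l$ and invoke the divisibility characterization of the order. The paper merely phrases it as a proof by contradiction, while your direct (and cleaner) version, together with the homomorphism reformulation, carries identical content.
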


\begin{proof}
Suppose to the contrary that $o_g(p^l) > o_g(p^{l+1})$. Let $a = o_g(p^l)$ and $b = o_g(p^{l+1})$, with $a > b$. Then we have that $g^a \equiv 1(\mod p^l)$ and $g^b \equiv 1(\mod p^{l+1})$, so $p^l \vert g^a-1$ and $p^{l+1} \vert g^b-1$. Since $p^{l+1} \vert g^b-1$, we also have that $p^{l} \vert g^b-1$. Thus $g^b \equiv 1(\mod p^l)$, which means that $a$ divides $b$. This contradicts the fact that $a>b$, so we have that $o_g(p^l) \leq o_g(p^{l+1})$.
\end{proof}

\begin{proposition}\label{prop2.19}
Let $p$ be an odd prime number relatively prime with $g$. Then $o_g(p^k) = o_g(p)$ for $k \leq \iota_g(p)$ and $o_g(p^k) = p^{k-\iota_g(p)}o_g(p)$ for all $k > \iota_g(p)$. 
\end{proposition}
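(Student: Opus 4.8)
The plan is to reduce both statements to a single $p$-adic valuation identity — a \emph{lifting the exponent} computation — and then simply read off the order formula. Write $d := o_g(p)$, and let $s := v_p(g^d - 1)$ be the exact power of $p$ dividing $g^d - 1$; since $p \mid g^d - 1$ by the definition of $d$, and $g^d - 1 \neq 0$, we have $1 \le s < \infty$. Everything rests on the claim
\[
v_p\bigl(g^{d p^j} - 1\bigr) = s + j \qquad \text{for all } j \ge 0,
\]
which I would prove by induction on $j$; I will come back to this, as it is the only delicate step.

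Granting the identity, here is how I would obtain the first assertion together with the well-definedness of $\iota_g(p)$. For $1 \le l \le s$ we have $p^l \mid g^d - 1$, hence $g^d \equiv 1 \pmod{p^l}$, so $o_g(p^l) \mid d$; iterating Lemma \ref{lemi2.18} gives $o_g(p^l) \ge o_g(p) = d$, and therefore $o_g(p^l) = d$. On the other hand $p^{s+1} \nmid g^d - 1$, so $g^d \not\equiv 1 \pmod{p^{s+1}}$, i.e. $o_g(p^{s+1}) \neq d$; since $o_g(p^{s+1}) \ge o_g(p^s) = d$ by Lemma \ref{lemi2.18}, in fact $o_g(p^{s+1}) > d$, and iterating again $o_g(p^l) > d$ for every $l > s$. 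Thus $\{\, l : o_g(p^l) = o_g(p)\,\} = \{1, \dots, s\}$, so $\iota_g(p)$ is well defined and equals $s$.

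For the second assertion I would fix $k > \iota_g(p) = s$, write $k = s + j$ with $j \ge 1$, and set $a := o_g(p^{s+j})$. By the identity, $p^{s+j} \mid g^{d p^j} - 1$, so $a \mid d p^j$; and $g^a \equiv 1 \pmod p$ forces $d \mid a$, whence $a = d p^i$ for some $0 \le i \le j$. But then $v_p(g^a - 1) = v_p(g^{d p^i} - 1) = s + i$ by the identity, while $p^{s+j} \mid g^a - 1$ forces $s + i \ge s + j$, so $i = j$. Hence $o_g(p^{k}) = d p^j = o_g(p)\, p^{\,k - \iota_g(p)}$, as claimed.

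The hard part is the valuation identity. The case $j = 0$ is the definition of $s$. For the inductive step, I would write $g^{d p^j} = 1 + p^{s+j} u$ with $p \nmid u$ and expand $g^{d p^{j+1}} = (1 + p^{s+j} u)^p = \sum_{i=0}^{p} \binom{p}{i} p^{\,i(s+j)} u^i$: the $i = 1$ term is $p^{s+j+1} u$; each term with $2 \le i \le p-1$ has $p$-valuation at least $1 + i(s+j) \ge s + j + 2$, because $p \mid \binom{p}{i}$ and $s + j \ge 1$; and the $i = p$ term has valuation $p(s+j) \ge s + j + 2$ since $p \ge 3$. Hence $g^{d p^{j+1}} \equiv 1 + p^{s+j+1} u \pmod{p^{s+j+2}}$ with $p \nmid u$, giving $v_p(g^{d p^{j+1}} - 1) = s + j + 1$. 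This is exactly the point where $p$ odd is needed (for $p = 2$ the $i = 2$ term would survive), and apart from this binomial count everything else is routine bookkeeping with divisibility; so I expect that estimate to be the only real obstacle.
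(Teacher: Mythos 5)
Your proof is correct and rests on the same key computation as the paper's: expanding $(1+p^{k}u)^p$ by the binomial theorem and checking that the terms of index $i\geq 2$ have $p$-adic valuation at least $k+2$ (which is where $p$ odd enters), combined with Lemma \ref{lemi2.18} for monotonicity of the orders. The only difference is organizational — you isolate the valuation identity $v_p(g^{dp^j}-1)=s+j$ and explicitly identify $\iota_g(p)$ with $v_p(g^{o_g(p)}-1)$, whereas the paper runs the same induction directly on $a_k=o_g(p^k)$ — so this is essentially the paper's argument in a slightly cleaner packaging.
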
 

\begin{proof}
For $k \leq \iota_g(p)$, the statement follows from Lemma \ref{lemi2.18}. Assume by induction that for $k \geq \iota_g(p)$, $a_k := o_g(p^k) = p^{k-\iota_g(p)}o_g(p)$ and $o_g(p^k) < o_g(p^{k+1})$. Then there exists $q$ not divisible by $p$ such that $g^{a_k} = 1 + qp^k$. Raise this to power $p$ using the binomial formula: 
$$g^{pa_k} = 1 + p\cdot qp^k + q'p^{k+2},$$ for some integer $q'$. This implies that $a_{k+1} = o_g(p^{k+1})$ divides $pa_k$ and also that $pa_k$ is not $o_g(p^{k+2})$. Since $g^{a_{k+1}} \equiv 1(\mod p^{k+1})$ we have also that $g^{a_{k+1}} \equiv 1(\mod p^k)$ so $a_k$ divides $a_{k+1}$. Thus $a_{k+1}$ is a number that divides $pa_k$ and is divisible by $a_k$, and by the induction hypothesis $a_{k+1} > a_k$. Thus $a_{k+1} = pa_k = p^{k+1-\iota_g(p)}o_g(p)$. Also, $o_g(p^{k+1}) = pa_k \neq o_g(p^{k+2})$ so $o_g(p^{k+1}) < o_g(p^{k+2})$. Using induction we obtain the result. 
\end{proof}

\begin{proposition}\label{pr2.24}
Let $p_1, \dots, p_r$ be distinct odd primes relatively prime with $g$ and $k_1, \dots, k_r \geq 0$. For $i \in \{1,\dots,r\}$, let $j_i \geq 0$ be the largest integer such that $p_i^{j_i}$ divides $\lcm(o_g(p_1), \dots, o_g(p_r))$. Then 
\begin{equation}
 o_g(p_1^{k_1} \dots p_r^{k_r}) = \Bigg(\prod_{i=1}^{r} p_i^{\max\{k_i-j_i-\iota_g(p_i),0\}}\Bigg) \lcm(o_g(p_1), \dots, o_g(p_r)). 
\label{eq2.24.1}
\end{equation}

\end{proposition}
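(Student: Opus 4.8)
The plan is to reduce the general case to prime powers via Proposition \ref{prop2.17}, insert the prime-power formula of Proposition \ref{prop2.19}, and then verify the resulting identity by comparing $q$-adic valuations of the two sides, one prime $q$ at a time.

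\emph{Step 1 (reduction).} The numbers $p_1^{k_1},\dots,p_r^{k_r}$ are pairwise coprime odd integers coprime to $g$, so iterating Proposition \ref{prop2.17} yields
$$o_g(p_1^{k_1}\cdots p_r^{k_r}) = \lcm\bigl(o_g(p_1^{k_1}),\dots,o_g(p_r^{k_r})\bigr).$$
By Proposition \ref{prop2.19} (we may assume each $k_i\ge 1$, since a prime with $k_i=0$ contributes a trivial factor and can simply be discarded), $o_g(p_i^{k_i}) = p_i^{e_i}\,o_g(p_i)$ with $e_i:=\max\{k_i-\iota_g(p_i),0\}$. Hence it remains to prove
$$\lcm\bigl(p_1^{e_1}o_g(p_1),\dots,p_r^{e_r}o_g(p_r)\bigr) = \Bigl(\prod_{i=1}^r p_i^{\max\{k_i-j_i-\iota_g(p_i),0\}}\Bigr)L,\qquad L:=\lcm\bigl(o_g(p_1),\dots,o_g(p_r)\bigr).$$

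\emph{Step 2 (valuations).} Since lowest common multiples are computed prime by prime, I would fix a prime $q$ and compare its exponent $v_q$ on the two sides. The only arithmetic fact needed is that $o_g(p_i)$ divides $p_i-1$, so $v_{p_i}(o_g(p_i))=0$; consequently $j_i = v_{p_i}(L) = \max_{t\ne i} v_{p_i}(o_g(p_t))$ (an empty maximum read as $0$). If $q\notin\{p_1,\dots,p_r\}$, the powers $p_i^{e_i}$ and the product $\prod_{i=1}^r p_i^{\max\{k_i-j_i-\iota_g(p_i),0\}}$ contribute nothing, so both sides have $v_q$ equal to $\max_i v_q(o_g(p_i)) = v_q(L)$. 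If $q=p_\ell$, the left side has $v_{p_\ell} = \max\bigl(e_\ell+v_{p_\ell}(o_g(p_\ell)),\ \max_{i\ne\ell}v_{p_\ell}(o_g(p_i))\bigr) = \max(e_\ell,j_\ell)$, while the right side has $v_{p_\ell} = \max\{k_\ell-j_\ell-\iota_g(p_\ell),0\}+j_\ell$. Writing $x:=k_\ell-\iota_g(p_\ell)$, both reduce to $\max\{x,j_\ell\}$: the left because $\max\bigl(\max\{x,0\},j_\ell\bigr)=\max\{x,j_\ell\}$ when $j_\ell\ge0$, the right because $\max\{x-j_\ell,0\}+j_\ell=\max\{x,j_\ell\}$ when $j_\ell\ge0$. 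Since the exponents agree for every prime $q$, the two numbers are equal.

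I do not expect a genuine obstacle here: once Propositions \ref{prop2.17} and \ref{prop2.19} are in hand the argument is pure bookkeeping. The one step that needs care is the valuation comparison at $q=p_\ell$, where one must remember that $o_g(p_\ell)$ is itself coprime to $p_\ell$, so that the $p_\ell$-part $j_\ell$ of $L$ is supplied entirely by the orders $o_g(p_i)$ of the \emph{other} primes; this is precisely what makes the two maximum-expressions line up.
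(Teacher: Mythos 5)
Your proof is correct and follows essentially the same route as the paper's: reduce to $\lcm(o_g(p_1^{k_1}),\dots,o_g(p_r^{k_r}))$ via Proposition \ref{prop2.17}, substitute Proposition \ref{prop2.19}, and then absorb each factor $p_i^{\max\{k_i-\iota_g(p_i),0\}}$ into the lcm prime by prime; your valuation comparison at $q=p_\ell$ (using $p_\ell\nmid o_g(p_\ell)$, so that $j_\ell$ comes entirely from the other orders) is just a more explicit rendering of the paper's observation that this factor either already divides $\lcm(o_g(p_1),\dots,o_g(p_r))$ or contributes exactly $p_i^{k_i-\iota_g(p_i)-j_i}$. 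One small caveat: discarding the primes with $k_i=0$ changes both $L$ and the exponents $j_i$, so strictly speaking you (like the paper itself) establish the identity only for $k_i\ge 1$; as literally stated with some $k_i=0$ the right-hand side still contains $o_g(p_i)$ while the left-hand side does not, and the formula can fail.
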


\begin{proof}
We have that $o_g(p_1^{k_1} \dots p_r^{k_r}) = \lcm(o_g(p_1^{k_1}), \dots, o_g(p_r^{k_r}))$ by Proposition \ref{prop2.17}. By Proposition \ref{prop2.19}, $$o_g(p_1^{k_1} \dots p_r^{k_r}) = \lcm\bigg(p_i^{\max\{k_i-\iota_g(p_i),0\}}o_g(p_i); i \in \{1, \dots, r\}\bigg).$$ If $k_i - \iota_g(p_i) \leq j_i$, then $p_i^{\max\{k_i-\iota_g(p_i),0\}}$ already divides $\lcm(o_g(p_1), \dots, o_g(p_r))$ so it does not contribute to the right-hand side. If $k_i-\iota_g(p_i) > j_i$, then $p_i^{\max\{k_i-\iota_g(p_i),0\}}$ contributes with $p_i^{k_i-\iota_g(p_i)-j_i}$ to the right-hand side. Then \eqref{eq2.24.1} follows. 
\end{proof}

\begin{proposition}\label{prop2.23}
Let $m$ be a primitive number and let $C = \{x_0, \dots, x_{p-1}\}$ be an extreme cycle. Then:
\begin{enumerate}
	\item The length $p$ of the cycle is equal to $o_g(m)$. 
	\item Every element of the cycle $x_i$ is mutually prime with $m$. 
	\item The extreme cycle $C$ is a coset of the group $G_{m,g}$ in $U(\Z_m)$, $C=x_0G_{m,g}$. 
	\item The number $m$ is primitive if and only if it is incomplete and $\gcd(C) = 1$ for all extreme cycles $C$.  
\end{enumerate}
\end{proposition}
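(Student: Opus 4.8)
The plan is to boil the cycle relations down to the single congruence $x_j\equiv g^{-j}x_0\pmod m$, prove (2) first — in the sharper form $\gcd(x_0,\dots,x_{p-1})=1$, which is also exactly what (4) requires — and then read (1) and (3) off that congruence together with the elementary bound $0<x_j<m$. First the setup. Since $m$ is primitive it is incomplete, so by Lemma \ref{lem2.8} $\gcd(m,g)=1$. I index the cycle cyclically, $gx_{j+1}=x_j+l_j$ with $l_j\in\{0,m\}$ for all $j\in\Z$ and $x_{j+p}=x_j$; since $l_j\equiv 0\pmod m$, reduction mod $m$ gives $x_j\equiv gx_{j+1}\pmod m$. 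Because $\gcd(g,m)=1$ this yields two facts: $\gcd(x_j,m)$ does not depend on $j$, and (as $g$ is invertible mod $m$) $x_j\equiv g^{-j}x_0\pmod m$ for all $j$. Finally, Lemma \ref{lemi1.4} gives $0<x_j\leq\frac{m}{g-1}$, and $g\geq 4$ forces $0<x_j<m$, so $x_0,\dots,x_{p-1}$ are pairwise distinct elements of $\{1,\dots,m-1\}$.

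Next I would prove $\gcd(x_0,\dots,x_{p-1})=1$ and hence (2). Put $D=\gcd(x_0,\dots,x_{p-1})$. From $l_j=gx_{j+1}-x_j$ one gets $D\mid l_j$ for all $j$; the $l_j$ cannot all vanish (else telescoping the relations around the cycle gives $(g^p-1)x_0=0$, so $x_0=0$, contradicting $x_0>0$), so some $l_j=m$ and therefore $D\mid m$, while $D<m$ since $0<x_0<m$. If $D>1$, dividing every relation by $D$ exhibits $\{x_0/D,\dots,x_{p-1}/D\}$ as a non-trivial extreme cycle for the digit set $\{0,m/D\}$ with $m/D$ an odd proper divisor of $m$; this contradicts the completeness of the proper divisors of a primitive number. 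Hence $D=1$; this is the statement $\gcd(C)=1$, and since the common value of $\gcd(x_j,m)$ divides $D$, also $\gcd(x_j,m)=1$ for every $j$, proving (2).

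Then (1) and (3) follow quickly. Telescoping $x_j\equiv gx_{j+1}\pmod m$ gives $x_0\equiv g^px_0\pmod m$, so $\gcd(x_0,m)=1$ forces $g^p\equiv 1\pmod m$ and $o_g(m)\mid p$; conversely, with $a=o_g(m)$ we have $x_a\equiv g^{-a}x_0\equiv x_0\pmod m$, and since $x_a,x_0\in\{1,\dots,m-1\}$ this gives $x_a=x_0$, hence $p\mid a$, so $p=o_g(m)$: this is (1). For (3): as $j$ runs over $0,\dots,p-1$, $g^{-j}\bmod m$ runs through $G_{m,g}=\langle g\rangle\leq U(\Z_m)$, which is cyclic of order $o_g(m)=p$ (note $g^{-j}\equiv g^{p-j}\pmod m$); hence $\{x_j\bmod m\}=x_0G_{m,g}$, and since $x_j\in U(\Z_m)$ by (2) and the $x_j$ are distinct mod $m$, the map $x_j\mapsto x_j\bmod m$ identifies $C$ with the coset $x_0G_{m,g}$. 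Finally (4): the forward direction is immediate, since a primitive $m$ is incomplete and the computation above applies to every extreme cycle of $\{0,m\}$; for the converse, if $m$ were incomplete but not primitive, some odd proper divisor $d\mid m$ would be incomplete, hence carry a non-trivial extreme cycle $\{y_0,\dots,y_{q-1}\}$ for $\{0,d\}$, and with $k=m/d$ (odd, $k\geq 3$) the scaled set $\{ky_0,\dots,ky_{q-1}\}$ is a non-trivial extreme cycle for $\{0,m\}$ (cf.\ the proof of Lemma \ref{lem2.5}) with $\gcd$ equal to $k\cdot\gcd(y_0,\dots,y_{q-1})\geq k>1$, contradicting the hypothesis; so $m$ is primitive.

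I expect the only real work to be the second step, the identity $\gcd(x_0,\dots,x_{p-1})=1$: it carries both (2) and the forward half of (4), and the delicate point is to check that dividing the cycle by $D$ produces a genuine non-trivial extreme cycle for $\{0,m/D\}$ — integer cycle points, digit set exactly $\{0,m/D\}$, digits not all zero — so that the primitivity hypothesis can legitimately be invoked. Once (2) is available, (1) and (3) are formal consequences of the congruence $x_j\equiv g^{-j}x_0\pmod m$ and the bound $0<x_j<m$, and the converse in (4) is a one-line scaling argument.
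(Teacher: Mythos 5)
Your proof is correct and follows essentially the same route as the paper: the congruence $x_j\equiv gx_{j+1}\pmod m$ together with Lemma \ref{lem2.8}, the bound $0<x_j\leq\frac{m}{g-1}<m$ to upgrade congruences to equalities, and the scaling/division argument against primitivity. The only (harmless) difference is organizational — you establish $\gcd(C)=1$ first and deduce (ii) from it, whereas the paper proves $\gcd(x_0,m)=1$ directly and handles $\gcd(C)=1$ separately in part (iv).
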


\begin{proof}
Suppose $x_0$ and $m$ have a common divisor $d > 1$. Then, since $x_1 = \frac{x_0+l_0}{g}$ we have that $gx_1$ is divisible by $d$. From Lemma \ref{lem2.8}, we have that $g$ and $m$ are relatively prime because $m$ is primitive, so $d$ must divide $x_1$. By induction $d$ divides all the elements of the cycle. But then $\{\frac{x_0}{d}, \frac{x_1}{d}, \dots, \frac{x_{p-1}}{d}\}$ is an extreme cycle for the digits $\{0, \frac{m}{d}\}$. This contradicts that $m$ is primitive. 

We have $g^jx_i \equiv x_{(i-j)(\mod p)}(\mod m)$ for all $i, j \in \{0, \dots, p-1\}$. Therefore $g^px_0 \equiv x_0(\mod m)$. Since $x_0$ is in $U(\Z_m)$, we get that $g^p \equiv 1(\mod m)$, so $p$ is divisible by $o_g(m) =: a$. Also, we have $x_0 \equiv g^ax_0 \equiv x_{-a(\mod p)}(\mod m)$ so, since all the elements of the cycle are in $[0, \frac{m}{g-1}]$ we get that $x_0 = x_{-a(\mod p)}.$ Therefore $a$ is divisible by $p$. Thus $p = a = o_g(m)$. 

Since the length of the cycle is $o_g(m)$ which is the order of the group $G$, and since $g^jx_0(\mod m) = x_{-j(\mod p)}$, we get that $x_0G_{m,g} = C$.

For (iv), suppose that $k = \gcd(C) > 1$. Then, one of the digits for the cycle is $m$, we can assume it is the first one, therefore we have $x_0 + m = gx_1$, which implies that $k$ divides $m$. Thus $\{\frac{x_i}{k} : i = 0, 1, \ldots, p-1\}$ is a cycle for $\frac{m}{k}$, contradicting that $m$ is primitive.  

Conversely, suppose that $m$ is not primitive. Then there exists a primitive number $p$ such that $m=pk$, $k \in \mathbb{N}$. Then $p$ has an extreme cycle $C$. So $kC$ is an extreme cycle for $m$, but $\gcd(kC) \geq k$, a contradiction.

\end{proof}

\subsection{The order and possible cycles}

\begin{theorem}\label{th4.1}
The only primitive number of order 1 is $g-1$.
There are no primitive numbers of order 2 or 3. If $g-1$ is not divisible by $3$ then there are no primitive numbers of order 4 or of order 5. If $g-1$ is divisible by 3, then there exists a unique primitive number of order 4, namely $m=\frac{g^4-1}{3}$, and there exists a unique primitive number of order 5, namely $\frac{g^5-1}{3}$. 
\end{theorem}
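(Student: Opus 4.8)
### Proof proposal

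\textbf{Setup.} By Proposition~\ref{prop2.23}, if $m$ is primitive of order $a:=o_g(m)$, then every extreme cycle $C$ is a coset $x_0 G_{m,g}$ of size exactly $a$, consisting of integers $x_j$ with $0<x_j<\frac{m}{g-1}$ (the strict upper bound because, by Lemma~\ref{lem2.8}, $m$ is coprime to $g$, hence to $g-1$ unless $m=g-1$), and $\gcd(C)=1$. The plan is to run through the possible small values of $a$, and in each case exploit the tight constraint that $a$ integers lying in the short interval $\left(0,\frac{m}{g-1}\right)$ must be permuted cyclically by multiplication by $g$ modulo $m$. The key quantitative fact I would use repeatedly: if $x_j$ is a cycle point and $x_j \equiv 0 \pmod g$ (Lemma~\ref{lem2.3}), then $x_j/g$ is the next cycle point and is $<\frac{m}{g(g-1)}$; and if $x_j\equiv -m\pmod g$ then $\frac{x_j+m}{g}$ is the next cycle point.

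\textbf{Order 1.} Here $C=\{x_0\}$ with $x_0=\frac{x_0+l_0}{g}$, $l_0\in\{0,m\}$; $l_0=0$ forces $x_0=0$ (trivial), so $l_0=m$ and $x_0=\frac{m}{g-1}$, forcing $g-1\mid m$. Then $\gcd(C)=x_0=1$ gives $m=g-1$. Conversely Lemma~\ref{lem2.4.1} shows $g-1$ is incomplete with cycle $\{1\}$, and Lemma~\ref{lem2.6} shows all smaller odd numbers are complete, so $g-1$ is primitive. This is the easy case.

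\textbf{Orders 2 and 3 (nonexistence).} Suppose $a=2$, cycle $\{x_0,x_1\}$, so $m>g-1$ and by Proposition~\ref{prop2.23}(i) $o_g(m)=2$, i.e.\ $m\mid g^2-1=(g-1)(g+1)$. Use the structure: at least one digit is $m$; WLOG trace the cycle and apply Lemma~\ref{lem2.3} to each point. I would show the only consistent possibility forces $m\mid g+1$ or $m\mid g-1$ via elementary inequalities on $x_0,x_1\in\left(0,\frac{m}{g-1}\right)$, and then that any such $m>g-1$ dividing $g^2-1$ either is incomplete already through a proper divisor (contradicting primitivity) or produces $\gcd(C)>1$. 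A cleaner route: $m\mid g^2-1$ with $g-1\nmid m$ and $m>g-1$ — but then since $o_g(m)=2$, $m$ divides $g^2-1$; writing $m=\frac{g^2-1}{d}$ and checking the candidate cycles directly (there are very few), one finds $\gcd$ of the cycle exceeds $1$, hence $m$ is not primitive. Order $3$: $m\mid g^3-1=(g-1)(g^2+g+1)$, cycle $\{x_0,x_1,x_2\}$; the same case analysis on digit patterns (the digit string has period $3$; patterns $(m,0,0)$ and its rotations, $(m,m,0)$ and rotations, $(m,m,m)$) combined with $x_i<\frac{m}{g-1}$ and Lemma~\ref{lem2.3} pins $x_0$ as a fixed explicit fraction of $m$; in each case $\gcd(C)>1$ unless we land back on $g-1$, which has order $1\ne 3$. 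I would organize this as: enumerate digit strings up to rotation, solve $x_0=\frac{\sum g^k l_k}{g^r-1}$ for each, compute $\gcd(x_0,\dots,x_{r-1})$, and observe it is always a nontrivial common factor of $m$ — contradiction with primitivity.

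\textbf{Orders 4 and 5.} Now $a=4$: $m\mid g^4-1$, $o_g(m)=4$, cycle of length $4$ with digit string of period $4$. Enumerate the $\le \lfloor 2^4/4\rfloor$ rotation classes of digit strings, solve for $x_0=\frac{g^3l_3+g^2l_2+gl_1+l_0}{g^4-1}$, and compute $\gcd(C)$. The claim is that $\gcd(C)=1$ forces exactly one surviving string, for which $x_0=\frac{g^3+1}{3}\cdot\frac{m}{g^4-1}$ or similar, and this is consistent only when $3\mid g-1$, giving $m=\frac{g^4-1}{3}$; one then checks this $m$ is incomplete (exhibit the cycle explicitly) and that all its proper divisors are complete — here invoking Lemma~\ref{lem2.6}, the corollary on completeness of numbers near powers of $g$, and Theorems~\ref{th2.12}–\ref{th2.13} to knock out the proper divisors of $\frac{g^4-1}{3}$. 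The condition "$3\nmid g-1 \Rightarrow$ no primitive of order $4$" is the contrapositive: if $3\nmid g-1$ the surviving digit string yields a non-integer $x_0$, so no cycle of length exactly $4$ exists. Order $5$ is analogous with $g^5-1$ and $5$-periodic digit strings; the surviving candidate is $m=\frac{g^5-1}{3}$, again requiring $3\mid g-1$.

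\textbf{Main obstacle.} The real work — and the step most likely to hide subtleties — is the case analysis for orders $4$ and $5$: one must be careful that (a) the digit string genuinely has minimal period equal to the claimed cycle length (a string with smaller period would give a shorter cycle, i.e.\ a divisor of $m$ being incomplete, which is exactly what primitivity forbids, so this actually helps), (b) the resulting $x_0$ is a genuine integer lying in $\left(0,\frac{m}{g-1}\right)$ — which translates into the divisibility-by-$3$ dichotomy on $g-1$, and (c) that $m=\frac{g^4-1}{3}$ (resp.\ $\frac{g^5-1}{3}$) has no incomplete proper divisor, which requires applying the completeness theorems of Section~2.2 to each prime-power factor and using Proposition~2.17/Proposition~2.24 on orders to rule out the composite divisors. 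I would expect the bookkeeping in (c) — verifying uniqueness, i.e.\ that no \emph{other} coset of $G_{m,g}$ fits inside the interval and that $\frac{g^k-1}{3}$ itself is primitive rather than merely incomplete — to be the crux, and I would lean on Proposition~\ref{pr2.14} (a coset all of whose elements are $<\frac{2m}{g}$ is automatically an extreme cycle) to produce the cycle for $\frac{g^k-1}{3}$ cleanly.
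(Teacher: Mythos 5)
Your skeleton — enumerate the period-$r$ digit strings up to rotation, write $x_0=\frac{\sum g^k l_k}{g^r-1}$, and extract divisibility constraints on $m$ from integrality — is exactly the paper's strategy, and your handling of orders $1$, $2$, $3$ is essentially right (the paper concludes in each dead case that $g-1\mid m$, hence $m$ is divisible by the primitive number $g-1$, rather than arguing via $\gcd(C)>1$; both mechanisms work here). But there are two genuine problems with the proposal as written.

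First, the step you yourself flag as the crux — showing that $\frac{g^4-1}{3}$ and $\frac{g^5-1}{3}$ are \emph{primitive} (not merely incomplete) and unique — is attacked the wrong way. You propose to verify that every proper divisor is complete by "applying the completeness theorems of Section 2.2 to each prime-power factor" and using Propositions \ref{prop2.17} and \ref{pr2.24} to rule out composite divisors. For a general even $g$ with $3\mid g-1$ this is not feasible: Theorems \ref{th2.12} and \ref{th2.13} require specific small elements to lie in $G_{m,g}$, and there is no reason these hypotheses hold for an arbitrary proper divisor of $\frac{g^4-1}{3}$. The paper avoids this entirely with a short bootstrap: $\frac{g^4-1}{3}$ is incomplete (it carries the $mmm0$ cycle), hence divisible by some primitive $d$; then $d\mid g^4-1$ forces $o_g(d)\in\{1,2,4\}$; orders $1$ and $2$ are already excluded (note $g-1\nmid\frac{g^4-1}{3}$); and the digit-string analysis you just carried out shows \emph{every} primitive number of order $4$ is divisible by $\frac{g^4-1}{3}$, so $d=\frac{g^4-1}{3}$. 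This simultaneously gives existence and uniqueness. Without this idea your proof of the order-$4$ and order-$5$ claims does not close.

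Second, the order-$5$ case is dismissed as "analogous," but it is where most of the computation lives: the rotation classes $m0m00$, $mmm00$, $mm0m0$ each require a separate coprimality analysis (e.g.\ showing $\gcd(1+g+g^3,\,g-1)=3$ exactly, and that $1+g+g^3$ is coprime to $1+g+g^2+g^3+g^4$), and it is not a priori obvious that all surviving strings funnel to the \emph{same} number $\frac{g^5-1}{3}$ — two distinct surviving strings could in principle have produced two distinct primitive candidates, destroying uniqueness. A small further slip: your parenthetical deduction that $m$ coprime to $g$ implies $m$ coprime to $g-1$ is false ($g=4$, $m=9$); the strict bound $x_j<\frac m{g-1}$ for non-trivial primitive $m$ instead comes from the fact that $g-1$ is itself primitive, so it cannot divide $m$.
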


\begin{proof}
The first statement is clear from Lemma \ref{lem2.4.1} and Lemma \ref{lem2.6}.
Suppose $m$ is a primitive number of order 2. Then, by Proposition \ref{prop2.23}, there exists an extreme cycle of length 2. The only possible digits that correspond to a cycle of length 2, up to a cyclical permutation, are $m0$. Then, by Lemma \ref{lemi1.4}, the cycle point is $x_0=\frac{m}{g^2-1}\in\bz$. This implies that $m$ is divisible by the primitive number $g-1$, which has order 1, a contradiction. 

Suppose now $m$ is a primitive number of order 3. Then it has an extreme cycle of length 3. The digits corresponding to such a cycle can be $000$, $00m$, $0m0$, $m00$, $0mm$, $mm0$ $m0m$ and $mmm$. The digits $000$ correspond to the trivial cycle $\{0\}$. The digits $mmm$ correspond to a cycle of length 1, not 3. The digits $00m$, $0m0$ and $m00$ correspond to three points in the same extreme cycle, and if one sequence appears then the other two appear too, therefore we can consider just one of them, e.g, $m00$. Same for $0mm$, $m0m$, $mm0$, we can consider just $mm0$.

Thus, up to a cyclical permutation, the only possible digits for such a cycle are, $m00$ or $mm0$. In the first case, the cycle point is $x_0=\frac{m}{g^3-1}$, and then $m$ is divisible by $g-1$, a contradiction. In the second case, the cycle point is $x_0=\frac{m(g+1)}{(g-1)(g^2+g+1)}$. But $g-1$ and $g+1$ are mutually prime (since $g$ is even) so $m$ is divisible by $g-1$, a contradiction. 

 Suppose $m$ is a primitive number of order 4. Then it has an extreme cycle of length 4. The digits for such a cycle can only be $m000$, $mm00$ or $mmm0$. In the first case the cycle point is $x_0=\frac{m}{g^4-1}$ so $m$ is divisible by $g-1$, contradiction. In the second case, the cycle point is $x_0=\frac{m(g+1)}{g^4-1}$. Since $g-1$ and $g+1$ are mutually prime, it follows that $m$ is divisible by $g-1$. In the last case the cycle point is $x_0=\frac{m(1+g+g^2)}{g^4-1}=\frac{m(1+g+g^2)}{(g-1)(g+1)(g^2+1)}$. If a prime number $p$ divides both $1+g+g^2$ and $g+1$ then it has to divide $g^2$ so it divides $g$ and $g+1$ so it divides 1. Therefore $1+g+g^2$ and $g+1$ are mutually prime so $m$ is divisible by $g+1$. If a prime number divides both $1+g+g^2$ and $g^2+1$ then it must divide $g$ so it divides 1, so $1+g+g^2$ and $g^2+1$ are mutually prime and therefore $m$ is divisible by $g^2+1$. If a prime number $p$ divides both $g^2+g+1$ and $g-1$ then it divides $g^2-2g+1$ so it divides $3g$. Then, either $p=3$ or $p$ divides $g$. If $p$ divides $g$ then it divides $1$. Thus the only common divisor of $g-1$ and $g^2+g+1$ can be 3. If $g-1$ is not divisible by 3, then $1+g+g^2$ and $g-1$ are mutually prime so $m$ is divisible by $g-1$, a contradiction. If $g-1$ is divisible by 3, then $g=3k+1$ for some $k\in\bz$ and so 
$1+g+g^2=3(1+3k+3k^2)$. This means that $1+g+g^2$ is not divisible by $9$ and therefore the greatest common divisor of $1+g+g^2$ and $g-1$ is 3. Then $m$ has to be divisible by $\frac{g-1}{3}\times (g+1)\times (g^2+1)=\frac{g^4-1}{3}$. 

Note that the number $\frac{g^4-1}{3}$ is incomplete since it has an extreme cycle point with digits $mmm0$. If it is not primitive, then there is a primitive number $m$ which divides it. Then $m$ divides $g^4-1$ so $g^4\equiv 1(\mod m)$ and therefore $o_g(m)$ divides 4, hence the order of $m$ is either 1,2 or 4. We ruled out the first two cases. If the the order of $m$ is 4, then from the discussion above, it follows that $m$ is divisible by $\frac{g^4-1}{3}$. So $m=\frac{g^4-1}{3}$. 

Suppose now $m$ is a primitive number of order 5. Then it has an extreme cycle of length 5. The digits for such a cycle can only be: $m0000$, $mm000$, $m0m00$, $mmm00$, $mm0m0$, $mmmm0$. 

For $m0000$ the cycle point is $x_0=\frac{m}{g^5-1}$ so $m$ is divisible by $g-1$, a contradiction. 

For $mm000$ the cycle point is $x_0=\frac{m(1+g)}{g^5-1}$. Since $g-1$ and $g+1$ are mutually prime, it follows that $m$ is divisible by $g-1$, contradiction. 

For $m0m00$ the cycle point is $x_0=\frac{m(1+g^2)}{g^5-1}$. If a prime number divides both $1+g^2$ and $g-1$ then it divides $g^2-2g+1$, so it divides $2g$, so it divides $g$, so it divides 1. Therefore $g-1$ and $1+g^2$ are mutually prime so $m$ is divisible by $g-1$, contradiction.

For $mmm00$ the cycle point is $x_0=\frac{m(1+g+g^2)}{g^5-1}$. If a prime number $p$ divides both $1+g+g^2$ and $g-1$ then, as in the discussion for the case of order 4, we get that $p=3$ and $g-1$ has to be divisible by $3$ and $\gcd(1+g+g^2,{g-1})=3$. Also, if a prime number divides both $1+g+g^2$ and $1+g+g^2+g^3+g^4$ then it divides $g^3(g+1)$ so it either divides $g$ or it divides $g+1$. If it divides $g$ then it divides $1$, and if it divides $g+1$ then it divides $g^2$, so it divides $g$, so it divides $1$. Thus, $1+g+g^2$ and $1+g+\dots+g^4$ are mutually prime and therefore $m$ is divisible by $1+g+\dots+g^4$. Hence $m$ is divisible by $\frac{g-1}3\times (1+g+\dots+g^4)=\frac{g^5-1}{3}$.

For $mm0m0$ the cycle point is $x_0=\frac{m(1+g+g^3)}{g^5-1}$. If a prime number $p$ divides both $1+g+g^3$ and $g-1$ then it divides $g^3-g^2$ so it divides $1+g+g^2$, then as before, $p=3$ and $g-1$ is divisible by 3. We prove that $\gcd(1+g+g^3, g-1)=3$. As we saw, the only prime number that divides both $1+g+g^3$ and $g-1$ is $3$; so we have to show only that $9$ does not divide both numbers. Let $g=3k+1$ with $k\in\bz$. Then $1+g+g^3=3(1+4k+9k^2+9k^3)$. If $9$ divides $1+g+g^3$ then 3 divides $1+k$, so $k=3l+2$ for some $l\in\bz$. But then $g-1=3(3l+2)=9k+6$ is not divisible by $9$. Thus $\gcd(1+g+g^3,{g-1})=3$. 

If a prime number divides both $1+g+g^3$ and $1+g+\dots+g^4$, then it divides $g^2(1+g^2)$, so it either divides $g$ or it divides $1+g^2$. If it divides $g$ then it divides $1$, and if it divides $1+g^2$ then it divides $g+g^3$ so it divides 1. Thus, $1+g+g^3$ and $1+g+\dots+g^4$ are mutually prime. Therefore 
$m$ has to be divisible by $\frac{g-1}{3}\times (1+g+\dots+g^4)=\frac{g^5-1}{3}$.

For $mmmm0$ the cycle point is $x_0=\frac{m(1+g+g^2+g^3)}{g^5-1}$. If a prime number divides $1+g+g^2+g^3$ and $g-1$ then it divides $g^3-g^2$, so it divides $1+g+2g^2$ and $2-4g+2g^2$ so it divides $5g-1$ and $5g-5$, so it divides $4$, which is impossible because $g-1$ is odd. Thus $m$ has to be divisible by $g-1$, contradiction.

In conclusion, if $g-1$ is not divisible by 3, then there are no primitive numbers of order 5. If $g-1$ is divisible by $3$, then a primitive number of order 5 must be divisible by $\frac{g^5-1}{3}$. This number is incomplete because it has at least two extreme cycles with digits $mmm00$ and $mm0m0$. If it is not primitive, then it is divisible by a primitive number $m$. Then $m$ divides $g^5-1$ so $g^5\equiv 1(\mod m)$ so the order of $m$ divides 5. We cannot have $o_g(m)=1$ so $o_g(m)=5$. From the previous discussion, we obtain that $m$ is divisible by $\frac{g^5-1}{3}$ so $m=\frac{g^5-1}{3}$.

\end{proof}

\begin{theorem}\label{th4.8.1}
Let $g=p+1$ where $p$ is a prime number. Then there are no non-trivial primitive numbers of order strictly less than $g$. 
\end{theorem}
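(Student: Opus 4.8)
The plan is a proof by contradiction that exploits two features of the hypothesis: that $g-1=p$ is prime, and the "average over a cycle" identity lurking inside Lemma~\ref{lemi1.4}. Suppose $m$ were a non-trivial primitive number with $a:=o_g(m)<g=p+1$, so $1\le a\le p$. First I would clear away the case $a=1$: then $g\equiv 1\pmod m$, hence $m\mid g-1=p$, and since $p$ is prime $m\in\{1,p\}$; but $m=1$ is complete by Lemma~\ref{lem2.6}, and $m=p=g-1$ is precisely the trivial primitive number, which is excluded by hypothesis. So I may assume $2\le a\le p$ and aim to force $g-1=p$ to be a proper divisor of $m$, contradicting primitivity since $g-1$ is incomplete by Lemma~\ref{lem2.4.1}.

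Next I would extract the digit structure. By Proposition~\ref{prop2.23} there is an extreme cycle $C=\{x_0,\dots,x_{a-1}\}$ whose length equals $a$, and by Lemma~\ref{lemi1.4} its points are
$$x_k=\frac{m\,N_k}{g^a-1},\qquad N_k=\sum_{j=0}^{a-1}\epsilon^{(k)}_j g^j ,$$
where the strings $(\epsilon^{(k)}_{a-1}\cdots\epsilon^{(k)}_0)$, $k=0,\dots,a-1$, run through the $a$ cyclic rotations of one binary block $\mathbf e=(\epsilon_{a-1}\cdots\epsilon_0)$ — the digit block of the cycle, whose $j$-th digit is $m\epsilon_j$. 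Let $w$ be the number of $1$'s in $\mathbf e$. Because the cycle is non-trivial, $w\ge 1$; and $w\le a-1$, since the constant block $\mathbf e=(1\cdots 1)$ would give the fixed point $m/(g-1)$, a cycle of length $1<a$.

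Then I would sum over the cycle. Each nonzero digit of $\mathbf e$ occupies every one of the $a$ positions exactly once across the rotations, so $\sum_{k=0}^{a-1}N_k=w\,(1+g+\cdots+g^{a-1})=w\cdot\frac{g^a-1}{g-1}$, whence
$$\sum_{k=0}^{a-1}x_k=\frac{m}{g^a-1}\sum_{k=0}^{a-1}N_k=\frac{mw}{g-1}.$$
The left-hand side is a sum of integers, so $(g-1)\mid mw$. Since $g-1=p$ is prime and $1\le w\le a-1\le p-1$, $p\nmid w$, hence $p\mid m$; thus $g-1=p$ divides $m$, and it is a \emph{proper} divisor because $m=p$ would give $o_g(m)=1\ne a$. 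As $g-1$ is incomplete (Lemma~\ref{lem2.4.1}), this contradicts that $m$ is primitive, which finishes the argument.

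The only step needing a little care is the identification of the $a$ cycle points with the $a$ cyclic rotations of a single length-$a$ digit block (so that the sum $\sum_k N_k$ is computed correctly) together with the bound $w\le a-1$; both follow from Proposition~\ref{prop2.23}(i) and the base-$g$ expansion of Lemma~\ref{lemi1.4}. I do not expect a genuine obstacle here: the real leverage is simply that $g-1$ prime turns "$(g-1)\mid mw$ with $w<g-1$" into "$(g-1)\mid m$", after which the known incompleteness of $g-1$ does the rest.
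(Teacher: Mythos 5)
Your proof is correct and rests on the same key mechanism as the paper's: since $g\equiv 1\pmod{g-1}$, the integrality of the cycle points forces $g-1=p$ to divide the number of nonzero digits $w$ times $m$, and the bound $1\le w\le a-1<p$ then does the work. The only cosmetic difference is that you obtain the divisibility $(g-1)\mid mw$ by summing over the whole cycle and then derive $p\mid m$ as the contradiction, whereas the paper reduces a single numerator modulo $p$ (using that $p\nmid m$ for a non-trivial primitive) to conclude $p\mid w$, hence $w\ge p$ and $n\ge p+1=g$ directly.
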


\begin{proof}
Let $m$ be a non-trivial primitive number of order $n$. Then, by Proposition \ref{prop2.23}, it has an extreme cycle of length $n$ with some digits $l_0,\dots,l_{n-1}\in\{0,m\}$. Let $k_i:=l_i/m\in\{0,1\}$, for $i\in\{0,\dots, n-1\}$. The cycle point is
$$x_0=\frac{m(k_0+gk_1+\dots+g^{n-1}k_{n-1})}{g^n-1}=\frac{m(k_0+gk_1+\dots+g^{n-1}k_{n-1})}{p(1+g+\dots+g^{n-1})}\in\bz.$$

Since $m$ is a non-trivial primitive number, it cannot be divisible by $g-1=p$. Therefore $k_0+gk_1+\dots+g^{n-1}k_{n-1}$ must be divisible by $p$. However $g\equiv1(\mod p)$ so $g^k\equiv 1(\mod p)$ for all $k$. Then $k_0+gk_1+\dots+g^{n-1}k_{n-1}\equiv k_0+k_1+\dots+k_{n-1}(\mod p)$, so $k_0+\dots+k_{n-1}$ must be divisible by $p$. 
Therefore we must have a multiple of $p$ ones among the digits $k_0,\dots,k_{n-1}$, so we have at least $p$ ones. Also, not all the digits can be 1, because then 
$x_0=m/p$ so $m$ is divisible by $g$, a contradiction. Therefore we must have at least $p+1=g$ digits, so $n\geq g$. 
\end{proof}

\begin{theorem}\label{th4.9}
Let $m$ be a non-trivial primitive number, $o_g(m)=:n$ and let $x_0$ be an extreme cycle point with digits $l_0,\dots, l_{n-1}$ as in \eqref{eqec}. Let $k_i:=l_i/m\in\{0,1\}$ for $i\in\{0,\dots,n-1\}$ and let $d:=\gcd(k_0+gk_1+\dots +g^{n-1}k_{n-1}, g^n-1)$. Then $m=\frac{g^n-1}{d}$.

Also, if $k_0,\dots, k_{n-1}$ are some digits in $\{0,1\}$ and if $d:=\gcd(k_0+gk_1+\dots +g^{n-1}k_{n-1}, g^n-1)$ then the number $m:=\frac{g^n-1}{d}$ is incomplete and has an extreme cycle with digits $mk_0,mk_1,\dots,mk_{n-1}$. 
\end{theorem}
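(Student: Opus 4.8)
My plan is to read off $x_0$ from Lemma \ref{lemi1.4} and then use Proposition \ref{prop2.23} to pin down its denominator. Since $m$ is a non-trivial primitive number, Lemma \ref{lem2.8} gives $\gcd(m,g)=1$, so $n:=o_g(m)$ is defined and $g^n\equiv1\pmod m$; write $g^n-1=ms$ with $s\in\bn$. By Proposition \ref{prop2.23} the extreme cycle has length exactly $n$ and every cycle point is coprime with $m$, while Lemma \ref{lemi1.4} gives
$$x_0=\frac{\sum_{i=0}^{n-1}l_ig^i}{g^n-1}=\frac{mN}{g^n-1},\qquad N:=\sum_{i=0}^{n-1}k_ig^i .$$
Here $N\geq1$ because $x_0$ is a positive integer and $mN=x_0(g^n-1)$, so $d=\gcd(N,g^n-1)$ is a well-defined positive integer.

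\textbf{The gcd step.} Substituting $g^n-1=ms$ into $x_0(g^n-1)=mN$ and cancelling $m$ gives $x_0s=N$. Hence $s\mid N$ and $s\mid ms=g^n-1$, so $s\mid d$; conversely $d\mid ms=g^n-1$ and $d\mid x_0s=N$, so $d\mid\gcd(ms,x_0s)=s\gcd(x_0,m)=s$, where I use $\gcd(x_0,m)=1$ from Proposition \ref{prop2.23}. Therefore $s=d$ and $m=(g^n-1)/s=(g^n-1)/d$. This coprimality is what promotes the obvious divisibility $s\mid d$ to an equality, and it is the only place where primitivity is really used.

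\textbf{Second part.} For the converse I would proceed directly: given $k_0,\dots,k_{n-1}\in\{0,1\}$, set $N=\sum_i k_ig^i$, $d=\gcd(N,g^n-1)$, $m=(g^n-1)/d$, and $l_i=mk_i$. Then $md=g^n-1$ is odd, so $m$ is odd and each $l_i\in\{0,m\}$; and $d\mid N$ makes $x_0:=N/d$ an integer. A routine geometric-series identity shows the forward orbit $x_{j+1}=(x_j+l_j)/g$ (indices on $l$ read mod $n$) satisfies $x_j=\bigl(\sum_{i=0}^{n-1}l_{i+j}g^i\bigr)/(g^n-1)$, so $x_n=x_0$ and the points $x_0,\dots,x_{n-1}$ form a cycle for $\{0,m\}$ with digits $l_0,\dots,l_{n-1}$. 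Since $x_0\in\bz$, Lemma \ref{lem2.7} forces every $x_j\in\bz$, so the cycle is extreme; it is non-trivial as long as the $k_i$ are not all zero (if they are, then $N=0$, $d=g^n-1$, $m=1$, and the associated cycle is the trivial one $\{0\}$, a degenerate case to be set aside), whence $m$ is incomplete.

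\textbf{Main obstacle.} I expect the gcd manipulation in the first part to be the only genuine difficulty: it is short but must be carried out carefully, and it is where Proposition \ref{prop2.23}(ii) enters decisively. The second part is essentially a verification; the one point needing attention there is that the constructed cycle realizes the prescribed digit string in the correct cyclic order, which the shift identity for $x_j$ handles, together with the edge case $N=0$.
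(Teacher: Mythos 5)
Your proof is correct, and the key step of the first part is handled by a genuinely different route than the paper's. The paper observes that $N/d$ and $(g^n-1)/d$ are coprime (where $N=\sum k_ig^i$), deduces from $x_0\in\bz$ only that $m':=(g^n-1)/d$ \emph{divides} $m$, then exhibits $N/d$ as an integer cycle point for the digit set $\{0,m'\}$ so that $m'$ is incomplete (via Lemma \ref{lem2.7}), and finally invokes primitivity of $m$ in the form ``all proper divisors of $m$ are complete'' to force $m=m'$. You instead set $s:=(g^n-1)/m$, note $x_0s=N$, and compute $\gcd(N,g^n-1)=\gcd(x_0s,ms)=s\gcd(x_0,m)=s$ using Proposition \ref{prop2.23}(ii); this pins down $d=s$ in one line. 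Both arguments ultimately lean on Proposition \ref{prop2.23} (yours on coprimality of cycle points with $m$, the paper's on the cycle length together with the completeness of proper divisors), but yours is shorter and avoids re-deriving incompleteness inside the first part. For the second part the two proofs coincide: the paper folds it into ``the previous computations,'' while you spell out the shift identity $x_j=\bigl(\sum_i l_{i+j}g^i\bigr)/(g^n-1)$ and the appeal to Lemma \ref{lem2.7}; you also correctly flag the degenerate all-zero digit string (where $m=1$ and the cycle is trivial), an edge case the theorem's statement silently excludes and the paper does not mention.
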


\begin{proof}
First note that we know that the length of the cycle is equal to $n$, from Proposition \ref{prop2.23}.
With Lemma \ref{lemi1.4}, we have that 
\begin{equation}
x_0=\frac{m(k_0+gk_1+\dots+g^{n-1}k_{n-1})}{g^n-1}=\frac{m\frac{k_0+gk_1+\dots+g^{n-1}k_{n-1}}{d}}{\frac{g^n-1}d}
\label{eq4.9.1}
\end{equation}
But $\frac{k_0+gk_1+\dots+g^{n-1}k_{n-1}}{d}$ and $\frac{g^n-1}d$ are mutually prime, and since $x_0$ is an integer, it follows that $m$ must be divisible by $\frac{g^n-1}d$. Let $m':=\frac{g^n-1}d$. Then 
$$x_0':=\frac{m'(k_0+gk_1+\dots+g^{n-1}k_{n-1})}{g^n-1}=\frac{k_0+gk_1+\dots+g^{n-1}k_{n-1}}{d}$$
is a cycle point for the digits $\{0,m'\}$ and it is in $\bz$, therefore, by Lemma \ref{lem2.7}, it is an extreme cycle point for $m'$. This means that $m'$ is incomplete. Since $m$ is divisible by $m'$ and it is also primitive, it follows that $m=m'$. 

The last statement of the theorem follows from the previous computations. 
\end{proof}

\begin{example}Recall from \cite{DuHa16}, that the first few primitive numbers for $g=4$ are $$\{3, 85, 341, 455, 1285, 4369, 5461\}.$$ They can be obtained very nicely as: 
$$3 = 4^1-1, 85 = \frac{4^4-1}{3}, 341 = \frac{4^5-1}{3}, 455 = \frac{4^6-1}{3^2}, 1285 = \frac{4^8-1}{3\cdot 5 \cdot 17}, 4369 = \frac{4^8-1}{3\cdot 5}, 5461 = \frac{4^7-1}{3}.$$
\end{example}

\begin{corollary}\label{cor4.9}
All primitive numbers $m$ are divisors of ${g^n-1}$, where $o_g(m) = n$.
\end{corollary}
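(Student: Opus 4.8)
The plan is to read this off immediately from the results already in place, so the proof is short. First I would invoke Lemma~\ref{lem2.8}: since $m$ is primitive, $\gcd(m,g)=1$, hence the multiplicative order $n = o_g(m)$ of $g$ in $U(\Z_m)$ is well-defined. By the very definition of the order, $g^n \equiv 1 \pmod m$, i.e.\ $m \mid g^n - 1$. That is precisely the assertion, so the corollary follows.

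Alternatively — and this is the route that makes it genuinely a ``corollary'' of Theorem~\ref{th4.9} — I would split into two cases according to whether $m$ is non-trivial. If $m \neq g-1$ is primitive with $o_g(m) = n$, then Theorem~\ref{th4.9} already furnishes the sharper identity $m = \frac{g^n-1}{d}$, where $d = \gcd(k_0 + gk_1 + \dots + g^{n-1}k_{n-1},\, g^n-1)$ is a positive integer arising from the digits of an extreme cycle; in particular $d \mid g^n-1$ and $m = (g^n-1)/d$ is an integer dividing $g^n - 1$. The only remaining case is the trivial primitive number $m = g-1$: here $g \equiv 1 \pmod{g-1}$, so $o_g(g-1) = 1$ and $g^1 - 1 = g - 1 = m$, whence $m \mid g^n - 1$ with $n = 1$.

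There is essentially no obstacle: the statement is a restatement of the defining property of the multiplicative order, and the only points needing a line of care are (a) checking that $o_g(m)$ is meaningful, which is exactly why Lemma~\ref{lem2.8} is needed, and (b) disposing of the boundary value $m = g-1$, for which the divisibility is verified by direct substitution rather than through Theorem~\ref{th4.9}. I would present the argument via Lemma~\ref{lem2.8} for brevity, optionally remarking that for non-trivial $m$ the conclusion is also immediate from the explicit formula $m = (g^n-1)/d$ of Theorem~\ref{th4.9}.
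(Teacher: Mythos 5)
Your proof is correct. The paper gives no explicit proof of this corollary; its placement immediately after Theorem~\ref{th4.9} indicates that the intended derivation is your second route, reading $m\mid g^n-1$ off the identity $m=(g^n-1)/d$, with the trivial case $m=g-1$ handled separately exactly as you do. Your primary route is genuinely different and in fact sharper in a useful way: it bypasses Theorem~\ref{th4.9} entirely, using only Lemma~\ref{lem2.8} to guarantee $\gcd(m,g)=1$ and then the bare definition of the multiplicative order to get $g^n\equiv 1\pmod m$. That argument shows the conclusion holds for \emph{any} odd $m$ coprime to $g$ with $o_g(m)=n$, not just primitive ones --- primitivity enters only to secure coprimality --- whereas the route through Theorem~\ref{th4.9} buys the stronger quantitative statement that $m$ is the specific divisor $(g^n-1)/d$ determined by the cycle digits. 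Either presentation is acceptable; your choice to lead with the elementary one is reasonable.
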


\begin{example}\label{ex4.10}
We illustrate how we can use Theorem \ref{th4.9} to find some non-trivial primitive numbers. Take for example $g=16$. We want a non-trivial primitive number $m$ so $m$ cannot be divisible by $g-1=15$. Also, it must have an extreme cycle, so for some choice of digits $k_0,\dots,k_{n-1}\in\{0,1\}$ we must have that 
$$x_0:=\frac{m(k_0+16k_1+\dots+16^{n-1}k_{n-1})}{16^n-1}$$
is an integer. Since $16^n-1$ is divisible by 15, the denominator must be divisible by 15. But $m$ should not be divisible by 15. So the term $k_0+16k_1+\dots+16^{n-1}k_{n-1}$ must contain some factors of 15, i.e., 3 or 5.

 Let's pick 3 first. Since $k_0+16k_1+\dots+16^{n-1}k_{n-1}\equiv k_0+k_1+\dots+k_{n-1}(\mod 15)$ (and $(\mod 3)$ and $(\mod 5)$), we must have $k_0+\dots+k_{n-1}$ divisible by 3. Therefore we have a multiple of 3 number of ones among these digits. We cannot just pick $111$ because that is actually the cycle with digit $1$. So we can pick $1110$. Thus $n=4$. Then $k_0+16k_1+\dots+16^{n-1}k_{n-1}=1+16+16^2$ is divisible by 3. We take $m=\frac{16^4-1}{3}$ and using Theorem \ref{th4.9}, or by a direct check we can see that the number is primitive. 

 We can do a similar thing for 5. We must have $k_0+\dots+k_{n-1}$ divisible by 5, so we need at least 6 digits, such as $111110$. Then we take $m=\frac{16^6-1}{5}=3355443$. A computer check shows that the only extreme cycle is $\{ 13981,  210589, 222877,  223693,  223645,  223696\}$ and these numbers are relatively prime. Therefore, with Proposition \ref{prop2.23}, we obtain that this number is primitive too.

Now let's take $g=12$. A non-trivial primitive number $m$ cannot be divisible by $g-1=11$. Therefore, we must find digits so that $k_0+12k_1+\dots+12^{n-1}k_{n-1}$ is divisible by $11$. As before, this implies that $k_0+\dots+k_{n-1}$ is divisible by 11, so we must have a multiple of 11 number of ones among these digits! We need some large numbers! We can take 
$\underbrace{11\dots1}_{11\mbox{ times}}0$. So $n=12$. We pick $m=\frac{12^{12}-1}{11}=810554586205$. A computer check shows that the only extreme cycle is 
$\{68057929271,  73217709623,  73647691319,  73686509111, $ 
$ 73683523127,  73686778679,  73686757943,$ $  
73686780563,  73686780564,  73686780407, 73686780551, 6140565047\}$.
 The numbers are relatively prime, and by Proposition \ref{prop2.23}, it follows that this number $m$ is primitive. 

\end{example}

\begin{lemma}\label{lem4.11}
The prime divisors of $g^n-1$ are precisely the prime numbers with order dividing $n$. 
\end{lemma}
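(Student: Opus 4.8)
The plan is to prove the two inclusions separately, using only the definition of $o_g(p)$ (Definition \ref{def2.8}) together with the elementary fact that $g^n\equiv 1\pmod p$ forces $o_g(p)\mid n$. First I would record a preliminary observation that makes the statement meaningful: any prime $p$ dividing $g^n-1$ must be relatively prime with $g$, since $p\mid g$ would give $g^n-1\equiv -1\pmod p$, which is impossible. Hence $o_g(p)$ is well defined for every prime divisor of $g^n-1$, and likewise the primes "with order dividing $n$" are automatically coprime with $g$, so there is no abuse in speaking of their order.

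For the forward inclusion, suppose $p\mid g^n-1$, i.e.\ $g^n\equiv 1\pmod p$. Write $n=q\cdot o_g(p)+r$ with $0\le r<o_g(p)$. Since $g^{o_g(p)}\equiv 1\pmod p$, we get $g^r\equiv g^{n}\cdot g^{-q\,o_g(p)}\equiv 1\pmod p$, and the minimality of $o_g(p)$ forces $r=0$. Thus $o_g(p)$ divides $n$.

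For the reverse inclusion, suppose $p$ is a prime (necessarily coprime with $g$) with $d:=o_g(p)$ dividing $n$, say $n=dk$. Then $g^n-1=(g^d)^k-1$ is divisible by $g^d-1$, and $g^d-1$ is divisible by $p$ by the definition of the order of $g$ modulo $p$. Hence $p\mid g^n-1$.

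Since every step is elementary, there is no genuine obstacle here; the only point requiring a moment of care is the preliminary remark ruling out primes that divide $g$, which is what guarantees that the phrase "order dividing $n$" is neither ill-posed nor vacuous.
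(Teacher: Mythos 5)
Your proof is correct and follows essentially the same route as the paper: one direction by raising $g^{o_g(p)}\equiv 1\pmod p$ to the appropriate power, the other by noting $g^n\equiv 1\pmod p$ forces $o_g(p)\mid n$. Your preliminary remark that prime divisors of $g^n-1$ are automatically coprime with $g$, and your explicit use of the division algorithm for the minimality step, are just slightly more detailed versions of what the paper leaves implicit.
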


\begin{proof}
Let $p$ be a prime number with $o_g(p) = l$, and $l|n$. Since $o_g(p) = l$, we have that $g^l \equiv 1(\mod p)$. Since $l | n$, we have that $n = lj$, for some $j \in \mathbb{Z}$. Thus, 
$$ (g^l)^j \equiv 1^j(\mod p) \implies g^n \equiv 1(\mod p) \implies g^n-1 \equiv 0(\mod p).$$
So, we have that $p | g^n-1$. 

Conversely, if $p$ is a prime divisor of $g^n-1$ then $g^n\equiv 1(\mod p)$ so $o_g(p)$ divides $n$.  
\end{proof}

\begin{theorem}\label{th2.23}
Let $q> g-1$ be mutually prime with $g-1$. Then $m:=\frac{g^q-1}{g-1}$ is incomplete and $o_g(m)=q$. All divisors $e>1$ of $m$ have $o_g(e)\neq 1$ and $o_g(e) | q$. 
If, in addition, $q$ is prime, then there exist primitive numbers of order $q$ and all primitive numbers $d$ that divide $m$ have $o_g(d)=q$. 
\end{theorem}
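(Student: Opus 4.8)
The plan is to prove the four assertions in order: the first by an explicit construction, the second by a short order computation, and the last two formally from the first two.

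\emph{Incompleteness.} Since $q>g-1$, choose the digits $l_0=\dots=l_{g-2}=m$ and $l_{g-1}=\dots=l_{q-1}=0$; this is legitimate since $q-(g-1)\ge 1$ slots remain to be filled with $0$. By Lemma~\ref{lemi1.4} the corresponding periodic base-$g$ point is
$$x_0=\frac{l_0+gl_1+\dots+g^{q-1}l_{q-1}}{g^q-1}=\frac{m\,(1+g+\dots+g^{g-2})}{g^q-1}=\frac{1+g+\dots+g^{g-2}}{g-1},$$
using $m=\frac{g^q-1}{g-1}$. The numerator is a sum of $g-1$ terms, each $\equiv 1\pmod{g-1}$, hence divisible by $g-1$, so $x_0$ is a positive integer. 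By Lemma~\ref{lem2.7}, $x_0$ is then an extreme cycle point, and since $x_0\neq 0$ this is a non-trivial extreme cycle, so $m$ is incomplete.

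\emph{The order of $m$.} From $(g-1)m=g^q-1$ we get $g^q\equiv 1\pmod m$ (note $\gcd(m,g)=1$ because $m\mid g^q-1$), so $o_g(m)\mid q$. If $d:=o_g(m)$ satisfied $d<q$, then $m\mid g^d-1$; as $g^d-1$ is a positive multiple of $m$, we would have $g^d-1\ge m=\frac{g^q-1}{g-1}$, i.e.\ $(g-1)(g^d-1)\ge g^q-1$, and expanding the left side gives $g^{d+1}>g^q$, forcing $d\ge q$, a contradiction. Hence $o_g(m)=q$. For a divisor $e>1$ of $m$: then $e\mid g^q-1$, so $o_g(e)\mid q$; and if $o_g(e)=1$ then $e\mid g-1$, but $m\equiv q\pmod{g-1}$ yields $\gcd(m,g-1)=\gcd(q,g-1)=1$ by hypothesis, so no $e>1$ can divide both $m$ and $g-1$. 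Thus $o_g(e)\ne 1$.

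\emph{The prime case.} Suppose $q$ is prime. Since $m$ is incomplete, the proposition characterizing incomplete numbers (every incomplete number is divisible by a primitive one) provides a primitive number $d\mid m$. By the previous paragraph $o_g(d)\mid q$ and $o_g(d)\ne 1$; as $q$ is prime this forces $o_g(d)=q$. This yields at once the existence of primitive numbers of order $q$ and the fact that every primitive divisor of $m$ has order exactly $q$. The only steps that require care are the choice of digit pattern in the incompleteness construction and the size estimate ruling out $o_g(m)<q$; once those are in place, the remaining claims are immediate from Lemma~\ref{lem2.7}, the incompleteness/primitive-divisor characterization, and elementary modular arithmetic.
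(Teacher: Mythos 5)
Your proposal is correct and follows essentially the same route as the paper: the same explicit cycle point $x_0=\frac{1+g+\dots+g^{g-2}}{g-1}$ obtained from $g-1$ digits equal to $m$ followed by zeros, the same observation that $m\equiv q\pmod{g-1}$ forces $\gcd(m,g-1)=1$ (hence $o_g(e)\neq 1$ for divisors $e>1$), and the same use of the primitive-divisor proposition in the prime case. The only cosmetic difference is in showing $o_g(m)=q$ exactly, where you bound $(g-1)(g^d-1)$ against $g^q-1$ while the paper simply notes $0<g^l-1<m$ for $l<q$; both are the same size estimate.
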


\begin{proof}
We know from Lemma \ref{lem4.11} that, for all prime divisors $d$ of $g^q-1$, $o_g(d)$ divides $q$. We have the factorization 
$g^q-1=(g-1)m$. We prove that $g-1$ and $m$ are mutually prime. If a prime number $p$ divides both $g-1$ and $m$, then $g\equiv 1(\mod p)$ so $g^n\equiv 1(\mod p)$ for all $n\in\bn$. So $m=1+g+\dots+g^{q-1}\equiv 1+1+\dots+1=q(\mod p)$. But $p$ divides $m$ so $0\equiv q(\mod p)$ which means that $p$ divides $q$, and this contradicts the fact that $g-1$ and $q$ are mutually prime. 

We show that if $e>1$ divides $m$ then $o_g(e)\neq 1$. If not, then $g\equiv 1(\mod e)$ so $e$ divides $g-1$. But $e$ divides $m$, and $g-1$ and $m$ are mutually prime, a contradiction. 

Clearly we have that $m$ divides $g^q-1$ so $g^q\equiv 1(\mod m)$ so $o_g(m)$ divides $q$. For $1\leq l<q$, $0<g^l-1<m$ so $g^l-1\not\equiv 0(\mod m)$. Thus $o_g(m)=q$. Therefore, any divisor of $e>1$ of $m$ has $o_g(e) | q$. 

Next, we show that $m$ is incomplete. Consider the cycle point $x_0$ with digits $$\underbrace{m,m,\dots, m}_{g-1\mbox{ times}}, \underbrace{0,0,\dots,0}_{q-g+1\mbox{ times}}.$$ 
Then, by Lemma \ref{lemi1.4}, we have
$$x_0=\frac{m(1+g+\dots+g^{g-2})}{g^q-1}=\frac{m(1+g+\dots+g^{g-2})}{(g-1)m}=\frac{1+g+\dots+g^{g-2}}{g-1}.$$
We have $g\equiv 1(\mod (g-1))$ so $g^l\equiv 1(\mod (g-1))$. Then $1+g+\dots +g^{g-2}\equiv (g-1)\equiv 0(\mod (g-1))$. So $x_0$ is an integer and therefore an extreme cycle point. So $m$ is incomplete. 

Assume now that $q$ is prime. Since $m$ is incomplete, there exists a divisor $m$ which is a primitive number. Then, $d$ divides $g^q-1$ so, by Lemma \ref{lem4.11}, $o_g(d)$ divides $q$ so it is 1 or $q$. However, it cannot be 1, since that would imply that $d$ divides $g-1$, and since $d$ divides $m$, this would contradict the fact that $g-1$ and $m$ are mutually prime. Therefore $o_g(d)=q$. 

\end{proof}

\begin{remark}\label{rem4.10}
The condition that $q$ is prime cannot be removed, if we want to find a primitive number of order $q$. For example,
there is no primitive number of order $q=14$ for $g=6$. We have that $14$ and $g-1=5$ are mutually prime. Also, we have that $6^{14}-1 = 5\cdot 7 \cdot 7 \cdot 29 \cdot 197 \cdot 55987$. Since $5$ and $55987$ are primitive for this $g$, of order $1$ and $7$ respectively, a primitive number of order 14 would have to be a divisor of $7 \cdot 7 \cdot 29 \cdot 197 = 279937$. However, this number is complete. 
\end{remark}

\begin{remark}
Theorem \ref{th2.23} can be used in finding new primitive numbers. When $g=4$, we know that prime numbers cannot be primitive. The following numbers must all be primitive because they are of prime order and the product of exactly two prime numbers (and all prime numbers are complete for $g=4$, by Theorem \ref{th2.18}):

$$\frac{4^{13}-1}{3} = 22369621 =2731 \cdot 8191$$ 
$$\frac{4^{17}-1}{3} = 5726623061 =43691 \cdot 131071 $$
$$\frac{4^{19}-1}{3} = 91625968981 =174763 \cdot 524287$$ 

However, 
$$\frac{4^{23}-1}{3} = 23456248059221 = 47 \cdot 178481 \cdot 2796203$$
is merely incomplete. A computer check shows that $8388607 = 47 \cdot 178481$ is complete, while $131421541 = 47 \cdot 2796203$ and $499069107643=178481 \cdot 2796203$ are primitive. 

\end{remark}

\begin{remark}
It is possible for $\frac{g^n-1}{g-1}$ to be complete. Take $g=22$ and $n=7$. Then $\frac{22^7-1}{21}=118778947$  is complete.
\end{remark}

\begin{corollary}\label{cor2.24}
Let $g=p+1$ where $p$ is a prime number. Then there are no non-trivial primitive numbers of order strictly less than $g$ and, for every prime number $q>g$, there exists a primitive number of order $q$. 
\end{corollary}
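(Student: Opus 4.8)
The statement naturally splits into two independent assertions, and the plan is to derive each from a result already established in the paper. The first assertion --- that there are no non-trivial primitive numbers of order strictly less than $g$ when $g=p+1$ for a prime $p$ --- is literally the conclusion of Theorem \ref{th4.8.1}. So for this half I would simply invoke that theorem; no new argument is needed.

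For the second assertion, the plan is to apply Theorem \ref{th2.23} with the given prime $q$. That theorem requires two hypotheses on $q$: that $q>g-1$ and that $q$ is mutually prime with $g-1$. The first is immediate, since by hypothesis $q>g>g-1$. For the coprimality, the key observation is that in this setting $g-1=p$ is itself a prime. Since $q$ is a prime with $q>g=p+1$, in particular $q>p$, so $q\neq p$; two distinct primes are coprime, hence $\gcd(q,g-1)=\gcd(q,p)=1$. With both hypotheses verified and $q$ prime, the final clause of Theorem \ref{th2.23} gives exactly that there exist primitive numbers of order $q$ (in fact all primitive divisors of $\frac{g^q-1}{g-1}$ have order $q$). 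Combining the two halves completes the proof.

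There is essentially no substantive obstacle here: the corollary is a packaging of Theorems \ref{th4.8.1} and \ref{th2.23}. The only point that genuinely needs to be checked, rather than just quoted, is the coprimality hypothesis of Theorem \ref{th2.23}, and that reduces entirely to the remark that $g-1$ is prime in the case $g=p+1$, so that a prime $q>g$ cannot share a factor with it.
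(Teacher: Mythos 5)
Your proposal is correct and follows exactly the paper's own route: the first assertion is quoted from Theorem \ref{th4.8.1} and the second is obtained by applying Theorem \ref{th2.23} to the prime $q$. Your explicit verification that $\gcd(q,g-1)=1$ (because $g-1=p$ is prime and $q>g>p$) is a detail the paper leaves implicit in its ``follows immediately,'' but it is the right and only check needed.
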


\begin{proof}
The first part is contained in Theorem \ref{th4.8.1}, and the second part follows immediately from Theorem \ref{th2.23}. 
\end{proof}

\begin{example}
This example demonstrates a possible method for determining whether there exists a primitive number of order $n$. Let $g=4$, since $g-1=3$ is prime, it has already been shown that a primitive number exists for every prime $q>4$. We now consider multiples of prime numbers.   
Consider $n=22$. There are no primitive numbers of order $2$, and the only primitive number of order $11$ is $60787 = 89 \cdot 683$. Using the relationship between cycle points, and assuming, without loss of generality, that the last two digits in the cycle are $m0$, we have that 

$$x_0 = \frac{4m(k_0 + k_1\cdot 4 + \ldots + k_{19}\cdot 4^{19} + 4^{20})}{4^{22}-1} = \frac{4m(k_0 + k_1\cdot 4 + \ldots + k_{19}\cdot 4^{19} + 4^{20})}{3(5\cdot 23 \cdot 89 \cdot 397 \cdot 683 \cdot 2113)}$$
for some $k_0,\dots,k_{19}$ in $\{0,1\}$.

The orders of the numbers in the denominator are $1, 2, 11, 11, 22, 11, 22$ respectively. In order for a primitive number of order $22$ to exist, we need to cancel $89$ and/or $683$ with the parenthesis in the numerator. Since the parenthesis in the numerator must also be divisible by $3$, we know we need exactly $3l-1$ terms in addition to the $4^{20}$ term. Consider the multiplicative groups generated by $4$ modulo $89$ and $683$, since our primitive number $m$ should not be divisible by $60787 = 89 \cdot 683$ which is primitive. 

For $89$, we have $\{4, 16, 64, 78, 45, 2, 8, 32, 39, 67, 1\}$ and $4^{20} \equiv 39 (\mod 89)$.
 
For $683$, we have $\{4, 16, 64, 256, 341, 681, 675, 651, 555, 171, 1\}$ and $4^{20} \equiv 555 (\mod 683)$.

We need to pick exactly $2$, $5$, or $8$ terms from these groups, add them together with $4^{20}$, and try to get a number equivalent to $0 (\mod 89 \text{ or } 683)$.  

Using a computer, we see that from the first set, $4 + 16 + 78 + 2 + 39 + 39 = 178 \equiv 0(\mod 89)$ and from the second set, $256 + 555 + 555 =  1366 \equiv(0 \mod 683)$ satisfy these conditions. 

So, for the numerator, we get $4 + 4^2 + 4^4 + 4^6 + 4^9 + 4^{20}$ in the first case and $4^4 + 4^9 + 4^{20}$ in the second. 

Thus the number $5\cdot 23\cdot 89\cdot 397\cdot  2113$ is incomplete. 
A computer check shows that $\frac{4^{22}-1}{3\cdot 5\cdot 683}$ is primitive. 

Also the number $5\cdot 23\cdot 397 \cdot 683\cdot 2113$ is incomplete. A computer check shows that $\frac{4^{22}-1}{3\cdot 5\cdot 89}$ is primitive. Both have order 22. 
\end{example}

 For the next theorem, when we say $x=d_0d_1\dots d_n$ in base $g$, we mean 
$$x=d_0g^n+d_1g^{n-1}+\dots+d_{n-1}g + d_n.$$

\begin{theorem}\label{th2.34}
Let $m = \underbrace{11 \ldots 1}_{g\text{-times}}$ in base $g$, so $m=\frac{g^g-1}{g-1}$. Then $m$ is primitive  with the base $g$ extreme cycle point $12 \ldots (g-2)(g-1)$ and $m$ has has cycle length $g$. Moreover, the cycle generated by this cycle point is the only extreme cycle for $m$. 
\end{theorem}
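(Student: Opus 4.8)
The plan is to verify three things: (a) the claimed number $m = \frac{g^g-1}{g-1}$ is incomplete with the stated extreme cycle point; (b) $m$ is primitive; and (c) this extreme cycle is the only one. For part (a), I would apply Theorem \ref{th4.9} (or Lemma \ref{lemi1.4} directly) with digit string $k_0 = k_1 = \dots = k_{g-1} = 1$, i.e. all ones of length $n = g$. Then $k_0 + gk_1 + \dots + g^{g-1}k_{g-1} = 1 + g + \dots + g^{g-1} = \frac{g^g-1}{g-1}$, and the cycle point is $x_0 = \frac{m \cdot \frac{g^g-1}{g-1}}{g^g-1} = \frac{m}{g-1} = \frac{g^g-1}{(g-1)^2}$. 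Hmm — but the theorem asserts the cycle point is $12\dots(g-1)$ in base $g$, which corresponds to a different digit string. Let me reconsider: the extreme cycle point $x_0 = 12\ldots(g-2)(g-1)$ in base $g$ means $x_0 = \sum_{j=1}^{g-1} j\, g^{g-1-j}$, and one checks $(g-1)x_0 + 1 = g^g \cdot(\text{something})$... actually the cleanest route is to note that the digit string of the \emph{extreme cycle} (the $l_i$'s) is $\underbrace{m\,m\cdots m}_{g\text{ times}}$ shifted appropriately — no, a cycle of all $m$'s has length $1$. The right observation is that $x_0 = 12\ldots(g-1)$ base $g$ is the value $\frac{1}{g-1}\left(\frac{g^g-1}{g-1} - (g-1)\right)$, and one verifies the cycle relation $x_{j+1} = (x_j + l_j)/g$ directly by the base-$g$ shift: starting from $12\ldots(g-1)$, the digit-shift dynamics cycle through the $g$ cyclic rotations, and at each step we add either $0$ or $m$ so as to land back in the attractor $X_L$ (using Lemma \ref{lemi1.4}'s characterization). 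So for part (a) I would: identify $x_0$ explicitly, check it is an integer (using $g \equiv 1 \pmod{g-1}$ as in Theorem \ref{th2.17}), invoke Lemma \ref{lem2.7} to conclude it is an extreme cycle point, hence $m$ is incomplete, and compute the cycle length is $g$ via Proposition \ref{prop2.23}(i) once primitivity is known, or directly by tracking the $g$ distinct rotations.

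For part (b), primitivity: by Corollary \ref{cor4.9} and the order computation, $o_g(m) = g$ — indeed $m \mid g^g - 1$ so $o_g(m) \mid g$, and $o_g(m) \neq$ any proper divisor of $g$ because $0 < g^l - 1 < m$ for $1 \le l < g$ would force $g^l \not\equiv 1$; wait, one needs $g^l - 1 < m = \frac{g^g-1}{g-1}$, which holds for $l \le g-1$, so indeed $o_g(m) = g$. Now suppose $m$ is not primitive; then it is divisible by a primitive number $d < m$ with $o_g(d) \mid g$. By Theorem \ref{th4.8.1} — or rather, I should invoke that any primitive divisor $d$ has $o_g(d) = g$ unless $d = g-1$; but $g-1 \nmid m$ (this is exactly the mutual-primality argument from Theorem \ref{th2.23}: a common prime $p$ of $g-1$ and $m$ would divide $m \equiv g \pmod p$, a contradiction). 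Hmm, this shows no primitive divisor equals $g-1$, but to rule out $o_g(d) = g$ with $d$ a proper divisor I would argue: if $o_g(d) = g = o_g(m)$ and $d \mid m \mid g^g-1$, then... this is the delicate point. The cleanest approach is probably to use Theorem \ref{th4.9}'s converse together with $\gcd(1 + g + \dots + g^{g-1},\, g^g - 1)$; since $g^g - 1 = (g-1)m$ and $\gcd(m, g-1) = 1$, and the numerator $1+\dots+g^{g-1} = m$, we get $d = \gcd(m, (g-1)m) = m$, so by Theorem \ref{th4.9} the primitive number equals $\frac{g^g-1}{d} = \frac{g^g-1}{m} = g-1$ — contradiction with non-triviality unless... wait, that gives $g-1$, not $m$. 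I think the correct digit string for \emph{this} cycle point is not all-ones; I would recompute $d$ for the actual digit string of $x_0 = 12\ldots(g-1)$ and show $d = g-1$, giving $m = \frac{g^g-1}{g-1}$ directly from Theorem \ref{th4.9}, which \emph{also} yields primitivity since Theorem \ref{th4.9}'s first part says a non-trivial primitive number with this cycle \emph{is} $\frac{g^n-1}{d}$ — but I still must independently confirm $m$ is primitive, not merely that \emph{if} primitive then it has this form.

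For part (c), uniqueness of the extreme cycle: by Proposition \ref{prop2.23}(iii) every extreme cycle is a coset $x_0 G_{m,g}$ of the subgroup generated by $g$ in $U(\Z_m)$, and $|G_{m,g}| = o_g(m) = g$. Each cycle lies in $(0, \frac{m}{g-1}) \cap \Z$, an interval of length $\frac{m}{g-1} = \frac{g^g-1}{(g-1)^2}$. The number of such cosets is $\phi(m)/g$, which is generally huge, so I cannot argue by counting alone; instead I would use the constraint that every element $x_j$ of an extreme cycle satisfies $x_j \equiv 0$ or $-m \pmod g$ (Lemma \ref{lem2.3}), together with $0 < x_j \le \frac{m}{g-1}$, and the base-$g$ expansion from Lemma \ref{lemi1.4}: the digits $l_i$ are $0$ or $m = \underbrace{1\ldots1}_g$ base $g$, and $x_0 = \frac{g^{g-1}l_{g-1} + \dots + l_0}{g^g - 1}$ must be an integer $\le \frac{m}{g-1}$. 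Writing $l_i = k_i m$ with $k_i \in \{0,1\}$, integrality forces $(g-1) \mid (k_0 + gk_1 + \dots + g^{g-1}k_{g-1})$, i.e. $(g-1) \mid \sum k_i$, so $\sum k_i \in \{0, g-1\}$ — not $g$ since all-ones gives length-$1$ cycle, not $0$ since nontrivial. With exactly $g-1$ ones among $g$ digits, there are exactly $g$ choices of which position is $0$, and these $g$ digit strings are the $g$ cyclic rotations of $\underbrace{m\ldots m}_{g-1}0$, i.e. they all describe the \emph{same} cycle. Then one checks the resulting cycle point is indeed $12\ldots(g-1)$ base $g$ and it is an integer, completing everything.

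The main obstacle I expect is reconciling the digit-string bookkeeping: pinning down which digit string $(k_i)$ produces the stated cycle point $12\ldots(g-1)$ base $g$, and carefully proving $\sum k_i$ cannot be $g$ (length collapses) — this forces $\sum k_i = g-1$ exactly — and that all $g$ such strings are cyclic shifts of one another, hence one cycle. The integrality/order computations are routine given the earlier lemmas, but the combinatorial step that "exactly $g-1$ ones, in any arrangement, gives the same cycle" is where the argument must be airtight, and it is also precisely what delivers uniqueness.
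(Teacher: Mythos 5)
Your outline has the right shape, but as written it contains a genuine circularity that leaves both primitivity and uniqueness unproved. In part (c) you write every extreme cycle point as $x_0=\frac{g^{g-1}l_{g-1}+\dots+l_0}{g^g-1}$, i.e.\ you assume from the outset that every extreme cycle admits a digit string of length exactly $g$; your only justification is Proposition \ref{prop2.23}(i), which applies only \emph{after} $m$ is known to be primitive. But your part (b) never establishes primitivity --- you say so yourself ("I still must independently confirm $m$ is primitive"). A cycle of length $r\neq g$ has denominator $g^r-1$, not $g^g-1$, and your congruence $\sum k_i\equiv 0\pmod{g-1}$ simply does not apply to it. The missing piece that would break the circle is the one the paper supplies via Proposition \ref{prop2.23}(iv): once the unique cycle $C$ is exhibited, primitivity follows from $\gcd(C)=1$ (the paper computes $gx_2-x_0=1$); you never perform this computation, and your attempted order-theoretic route through Theorem \ref{th4.9} collapses (as you noticed, the all-ones string gives $d=m$ and hence $g-1$, not $m$, because the stated cycle point $12\ldots(g-1)=\frac{m-g}{g-1}$ comes from the digit string with a single zero, not from all ones).

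It is worth saying that your part (c), once repaired, is a genuinely different and shorter route than the paper's. The paper never assumes the cycle length: it runs a careful induction on the base-$g$ digits of an arbitrary cycle point under the dynamics $x\mapsto x/g$ or $(x+m)/g$, forcing the form $1^{n_1}2^{n_2}\cdots(g-1)$ and then pinning down the $n_i$. Your observation that integrality of $\frac{\sum k_ig^i}{g-1}$ forces $\sum k_i\in\{0,g-1\}$, so the digit string is a rotation of $\underbrace{m\cdots m}_{g-1}0$, replaces all of that --- \emph{provided} you first prove that every extreme cycle of $m$ has length dividing $o_g(m)=g$ (reduce by $e=\gcd(x_0,m)$ to get a cycle for $m/e$ of the same length $r=o_g(m/e)$, which divides $o_g(m)$) and then unroll the period-$r$ string to a periodic length-$g$ string, noting that $\frac{m\sum_{i=0}^{g-1}k_ig^i}{g^g-1}=\frac{m\sum_{i=0}^{r-1}k_ig^i}{g^r-1}$. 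With that lemma in place, exactly one zero among $g$ digits rules out proper periods, gives the $g$ points $\frac{m-g^j}{g-1}$ of a single cycle of length $g$, and the difference $\frac{g-1}{g-1}=1$ of two of them gives $\gcd(C)=1$ and hence primitivity. Without it, the argument as submitted does not go through.
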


\begin{proof}
Note that all operations are taking place in base $g$. 
Let $x_0 = 123\ldots(g-3)(g-2)(g-1).$ Then
$$x_1 = \frac{123\ldots(g-3)(g-2)(g-1) + \overbrace{11 \ldots 1}^{g\text{-times}}}{g} = 123\ldots(g-4)(g-3)(g-1)0$$
$$x_2 = 123\ldots(g-4)(g-3)(g-1)$$
$$x_3 = \frac{123\ldots(g-4)(g-3)(g-1) + \overbrace{11 \ldots 1}^{g\text{-times}}}{g} = 1123\ldots(g-4)(g-3)(g-1)$$
$$x_4 = \frac{1123\ldots(g-4)(g-3)(g-1) + \overbrace{11 \ldots 1}^{g\text{-times}}}{g} = 1223\ldots(g-4)(g-3)(g-1)$$
$$\vdots$$
$$x_n = \frac{123\ldots(n-3)(n-3)\ldots(g-4)(g-3)(g-1) + \overbrace{11 \ldots 1}^{g\text{-times}}}{g} $$$$= 123\ldots(n-2)(n-2)\ldots(g-4)(g-3)(g-1)$$
$$\vdots$$ 
$$x_g = \frac{123\ldots(g-4)(g-3)(g-3)(g-1) + \overbrace{11 \ldots 1}^{g\text{-times}}}{g} = 123\ldots(g-3)(g-2)(g-1)$$

Since $x_g = x_0$, we have that this is indeed an extreme cycle of length $g$.

We prove that this is the only extreme cycle for $m$. Note that if $x_0$ has some decomposition $x_0=a_p\dots a_0=a_pg^p+\dots+a_1 g+a_0$ in base $g$ then the next element in the cycle is either $x_0/g$ or $(x_0+m)/g$. In the first case, the last digit $a_0$ has to be $0$ in the second case $a_0$ has to be $g-1$. 

In the case the last digit $a_0$ is $0$ we simply divide by $g$ and this means that in the base $g$ representation the last $0$ is removed, and we do so as many times this is possible, i.e., as many zeros we have in the end of the base $g$ representation, so we ignore the last zeroes and, for simplicity we talk about the cycle points that have an expansion that ends in a non-zero digit. 

Assume now the last digit $a_0$ is $g-1$ and consider the next to last digit $a_1$. The next element in the cycle is $x_1=(x+m)/g$. 

For a positive integer $x$ we will write $x=\dots a_ra_{r-1}\dots a_1a_0$ to indicate that the base $g$ representation ends in $a_ra_{r-1}\dots a_1a_0$. 

Since $x_0=\dots a_1 (g-1)$ and $m=\dots 11$, we get that $x_0+m=\dots ((a_1+2)\mod g)0$ and $x_1=\dots ((a_1+2)\mod g)$. Since $x_1$ is also a cycle point, its last digit is $0$ or $g-1$ therefore $a_1=g-2$ or $a_1=g-3$. 

We claim that every extreme cycle point for $m$ has the form 
\begin{equation}
\overbrace{1\dots1}^{n_1\text{-times}}\overbrace{2\dots2}^{n_2\text{-times}}\dots \overbrace{(g-2)\dots(g-2)}^{n_{g-2}\text{-times}}(g-1)
\label{eq2.28.1}
\end{equation}
with $n_1,\dots,n_{g-3}\geq 1$, $n_{g-2}\geq 0$.

First, we will prove that $x_0=\dots (g-3)(g-2)(g-2)\dots (g-2)(g-1)$ or $x_0=\dots (g-3)(g-1)$. If the next to last digit is $a_1=g-3$, we are done. If the next to last digit is $g-2$ we consider the digit immediately before it $a_2$. Since $x_0=\dots a_2(g-2)(g-1)$ we have 
$x_0+m=\dots ((a_2+2)\mod g)00$ so $x_1=\dots \dots ((a_2+2)\mod g)0$ and $x_2=\dots ((a_2+2)\mod g)$. Since this is an extreme cycle point, the last digit is either $0$ or $g-1$. Thus $a_2=g-2$ or $a_2=g-3$. By induction if $x_0=\dots a_l (g-2)\dots (g-2)(g-1)$ then $x_0+m=\dots ((a_l+2)\mod g)0\dots 00$, so dividing by $g$ as many times as needed we get an extreme cycle point of the form $\dots ((a_l+2)\mod g)$ and since the last digit has to be $0$ or $g-1$ it follows that $a_l=(g-2)$ or $a_l=(g-3)$. 

We show that we cannot have $x_0=(g-2)\dots (g-2)(g-1)$, so the digit $(g-3)$ has to appear. 

Note first that, by Proposition \ref{lemi1.4}, $x_0\leq \frac m{g-1}=\frac{g^{g-1}+\dots +g +1}{g-1}=\frac{g^n-1}{(g-1)^2}< g^{n-1}$ so $x_0$ has at most $g-1$ digits, so it has a shorter expansion than $m$ which has $g$ digits. 

If $x_0=(g-2)\dots (g-2)(g-1)$ then $x_0+m$ has the form $11\dots 120\dots 00$, which would imply that an extreme cycle point is of the form $11\dots 12$, a contradiction to the fact that the last digit has to be $0$ or $g-1$. 

Thus $x_0$ is of the form $\dots (g-3)(g-2)\dots (g-2)(g-1)$ and $g-2$ does not have to appear. Assume by induction that all extreme cycle points  $x_0$ (which do not end in 0) are of the form 
$$\dots a_l\overbrace{(g-k)\dots(g-k)}^{n_{g-k}\text{-times}}\dots \overbrace{(g-2)\dots(g-2)}^{n_{g-2}\text{-times}}(g-1),$$
with $k\geq 3$, $n_{g-k},\dots, n_{g-3}\geq 1$ and $n_{g-2}\geq 0$. Then $x_0+m=\dots(a_l+1)(g-k+1)\dots (g-2)\dots (g-2)(g-1)0\dots0$. Dividing by $g$ we get that an extreme cycle point is of the form $\dots(a_l+1)(g-k+1)\dots (g-2)\dots (g-2)(g-1)$, and by the induction hypothesis we obtain that $a_l+1=g-k+1$ or $a_l+1=g-k$ so 
$a_l=g-k$ or $a_l=g-k-1$. 

Thus the digits in the base $g$ expansion of $x_0$ form an increasing sequence and two consecutive digits differ by at most 1, with the exception of the last two which can be $(g-3)(g-1)$. 

We show that the first digit has to be $1$. Suppose $x_0=a_{p-1}\dots a_0$. We saw above that $x_0$ has at most $n-1$ digits then 
$x_0+m=1(a_{p-1}+1)\dots0$ so $x_1=1(a_{p-1}+1)\dots$. But we know that two consecutive digits of $x_1$ differ by at most $1$ so $a_{p-1}=1$. 

Combining these results we get that every extreme cycle point must have the form in \eqref{eq2.28.1}. 

Next we claim that either $n_1=\dots=n_{g-2}=1$ or $n_{g-2}=0$ and all but one of the $n_1,\dots,n_{g-3}$ are equal to $1$ with possibly at most one exception, which is equal to 2. 

Suppose first $n_{g-2}=0$. We know that the first digit is 1 and the last digits are $(g-3)(g-1)$. Also two consecutive digits before the $(g-3)$ differ by at most one and they appear in increasing order in the expansion. This means that all digits $1,2,\dots, (g-3)$ have to appear in the expansion (otherwise there is a jump by at least 2). So $n_1,\dots, n_{g-3}\geq 1$. 

On the other hand there are at most $g-1$ digits so $g-1\geq n_1+\dots +n_{g-3}+1\geq g-2$. This implies that we cannot have two numbers $n_i$ bigger than 2, moreover, at most one of them is 2 and the rest are $1$. 

If $n_{g-2}\geq 1$ then, with the previous argument, we get that all digits between $1$ and $g-2$ must appear in the expansion and then, as before we get $x_0=12\dots (g-1)$. Going through all the cases, we see that every possibility yields a point in the extreme cycle listed in the first part of the proof. 

We prove that $d=\gcd(C)=1$. Since $d$ divides $x_0=12\dots (g-3)(g-2)(g-1)$ and $gx_2=12\dots (g-3)(g-1)0$ it will divide also $gx_2-x_0=(g-1)g -((g-2)g+(g-1))=1$. 
\end{proof}

\begin{conjecture}\label{con3.29}
Let $m = \underbrace{11 \ldots 1}_{g\text{-times}}$ in base $g$, and let $g = p+1$ where $p$ is a prime number. Then $m$ is the first non-trivial primitive number.
\end{conjecture}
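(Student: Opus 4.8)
\emph{Plan of proof.} The ``positive'' half of the conjecture is already available: by Theorem~\ref{th2.34}, $m=\frac{g^g-1}{g-1}$ is primitive, it is non-trivial since $m\neq g-1$, and its extreme cycle has length $g$, so $o_g(m)=g$ by Proposition~\ref{prop2.23}(i). By Corollary~\ref{cor2.24} (equivalently, Theorem~\ref{th4.8.1}), since $g=p+1$ with $p$ prime there is no non-trivial primitive number of order less than $g$. Hence the conjecture is equivalent to the statement that every non-trivial primitive number $m'$ with $n:=o_g(m')\geq g$ satisfies $m'\geq m$, with $m'=m$ forced when $n=g$. I would split the argument into the cases $n=g$ and $n>g$.

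For $n=g$: by Proposition~\ref{prop2.23} the extreme cycle of $m'$ has length $g$; write its digits as $l_i=m'k_i$, $k_i\in\{0,1\}$, and put $N=k_0+gk_1+\dots+g^{g-1}k_{g-1}$. As in the proof of Theorem~\ref{th4.8.1}, $\sum_i k_i$ is a positive multiple of $g-1$, and since $0<\sum_i k_i\leq g$ this forces $\sum_i k_i=g-1$; thus exactly one $k_j$ vanishes and $N=m-g^j$. By Theorem~\ref{th4.9}, $m'=(g^g-1)/d$ with $d=\gcd(m-g^j,\,g^g-1)$. Since $g^g-1=(g-1)m$ and $\gcd(m,g-1)=1$ (because $m\equiv 1\pmod{g-1}$), one has $d=\gcd(m-g^j,g-1)\cdot\gcd(m-g^j,m)$. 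Here $\gcd(m-g^j,m)=\gcd(g^j,m)=1$, since $m\mid g^g-1$ forces $\gcd(g,m)=1$, and $g-1\mid m-g^j$ together with $0<m-g^j$ gives $\gcd(m-g^j,g-1)=g-1$. Therefore $d=g-1$ and $m'=m$.

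For $n>g$ one must show $m'>m$. By Theorem~\ref{th4.9}, $m'=(g^n-1)/d$ with $d=\gcd(N,g^n-1)$, where $N=\sum_i k_ig^i$, $k_i\in\{0,1\}$, and the number of nonzero $k_i$ is a positive multiple of $g-1$ not exceeding $n-1$. As $g-1=p$ is prime and $p\nmid m'$ (a non-trivial primitive number is not divisible by $g-1$; see the proof of Theorem~\ref{th4.8.1}), we have $p\mid d$, and $m'\geq m$ is equivalent to
\[
\gcd\bigl(N,\,g^n-1\bigr)\;<\;\frac{(g-1)(g^n-1)}{g^g-1}\qquad\text{for every admissible }N\text{ and }n>g .
\]
Useful inputs are that the $n$ cycle points $x_0=N/d,\dots,x_{n-1}$ are distinct integers in $(0,\tfrac{m'}{g-1}]$, pairwise coprime to $m'$ (Proposition~\ref{prop2.23}), so $m'\geq n(g-1)$; and that, if $m'<m$, every cycle point has at most $g-1$ base-$g$ digits, exactly the regime exploited in the proof of Theorem~\ref{th2.34}. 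From there I would bound $\gcd(N,g^n-1)$ one prime at a time: for a prime power $q^a\mid d$ with $q\nmid g-1$, if $e'$ is the order of $g$ modulo $q^a$ and $c_r$ is the number of the chosen exponents that are $\equiv r\pmod{e'}$, then $\sum_{r<e'}c_rg^r\equiv 0\pmod{q^a}$ with $\sum_r c_r\leq n-1$, which limits the size of $q^a$ unless $N$ in fact has a shorter period, while the part of $d$ supported on $p=g-1$ is controlled by lifting the exponent. Making these local estimates uniform enough to deduce the displayed inequality for all $n>g$ is, I expect, the real obstacle — and presumably why the statement remains conjectural: a clean, uniform bound on $\gcd(N,g^n-1)$ for sparse $N$ is what is missing. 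A reasonable first step is the range $g<n\leq 2g$, where the number of ones is forced to be $g-1$ and $N$ runs over a small explicit family; larger $n$ should then be handled by comparing $\gcd(N,g^n-1)$ with $M_n/M_g$, where $M_k=(g^k-1)/(g-1)$.
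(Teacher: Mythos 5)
You have correctly recognized that this statement is a conjecture: the paper proves only the ``positive half'' (Theorem~\ref{th2.34} shows that $m=\frac{g^g-1}{g-1}$ is primitive) and explicitly leaves open the claim that no non-trivial primitive number lies below $m$. Your reduction is sound: by Theorem~\ref{th4.8.1} (equivalently Corollary~\ref{cor2.24}) every non-trivial primitive number $m'$ has $n:=o_g(m')\geq g$, and by Proposition~\ref{prop2.23} and Theorem~\ref{th4.9} one has $m'=(g^n-1)/d$ with $d=\gcd(N,g^n-1)$ for an admissible digit pattern $N$. Your treatment of the case $n=g$ is correct and goes beyond what the paper records: the counting argument from the proof of Theorem~\ref{th4.8.1} forces exactly $g-1$ ones, so $N=m-g^j$, and the factorization $\gcd(m-g^j,(g-1)m)=\gcd(m-g^j,g-1)\cdot\gcd(m-g^j,m)=(g-1)\cdot 1$ (using $m\equiv 1\pmod{g-1}$, $\gcd(m,g-1)=1$ and $\gcd(g,m)=1$) yields $d=g-1$, hence $m'=m$. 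So the only non-trivial primitive number of order exactly $g$ is $m$ itself --- a clean partial result worth recording.

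The genuine gap is exactly where you place it: the case $n>g$. Neither your outline nor anything in the paper gives a uniform upper bound on $\gcd(N,g^n-1)$ for a sparse $0$--$1$ digit pattern $N$ whose number of ones is a positive multiple of $g-1$; without such a bound one cannot exclude a primitive $m'=(g^n-1)/d$ with $d$ so large that $m'<m$. The local tools you propose (sorting the chosen exponents by residue modulo the order of $g$ modulo each prime power dividing $d$, and lifting the exponent at $p=g-1$) constrain each prime factor of $d$ separately but do not control their product, and the only global lower bound available, $m'\geq n(g-1)$ from the $n$ distinct cycle points in $(0,m'/(g-1)]$, grows linearly in $n$ while $m$ is of size $g^{g-1}$, so it is far too weak. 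In short, your proposal is an honest and correct partial analysis that settles the order-$g$ case, but the statement remains a conjecture precisely because the uniform gcd estimate for $n>g$ is missing, here and in the paper.
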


\begin{remark}By Theorem \ref{th2.34}, we have that $m$ is primitive. It remains to be shown that no primitive numbers can exist between $p$ and $m$. 
\end{remark}

\begin{example}\label{ex3.30}
Let us illustrate, with an example, an algorithm for finding primitive numbers. Let $g=6$. Of course, the trivial primitive number is $5$. Therefore, no other primitive number has 5 in its prime decomposition. 

By Corollary \ref{cor4.9}, the primitive numbers are divisors of $6^n-1$, and since we can remove the 5 from the prime decomposition, they have to be divisors of $\frac{6^n-1}{5}$. By Theorem \ref{th4.1}, we can start with $n=6$. When $n$ is not divisible by $g-1=5$, we can use Theorem \ref{th2.23} to conclude that $\frac{6^n-1}{5}$ is incomplete. 

By Theorem \ref{th2.34}, $\frac{6^6-1}{5}=7\cdot 31\cdot 43$ is primitive. 

We used a computer program to check if the numbers are complete or not.

For $n=7$, we have $\frac{6^7-1}{5}=55987$ is prime and incomplete, thus primitive. 

For $n=8$, $a=\frac{6^8-1}{5}=7\cdot 37\cdot 1297$ is incomplete. We checked that $\frac{a}{7},\frac{a}{37},\frac{a}{1297}$ are complete, therefore $\frac{6^8-1}{5}$ is primitive.

For $n=9$, $a=\frac{6^9-1}{5}=19\cdot 43\cdot 2467$ is incomplete. We checked that $\frac{a}{19},\frac{a}{43},\frac{a}{2467}$ are complete, therefore $\frac{6^9-1}{5}$ is primitive. 

For $n=10$, $a=\frac{6^{10}-1}{5}=5\cdot 7\cdot 11\cdot 101\cdot 311$. We have to remove the extra 5 from the prime decomposition. We checked that 
$\frac{a}{5\cdot 7},\frac{a}{5\cdot 11}, \frac{a}{5\cdot 101},\frac{a}{5\cdot 311}$ are complete, therefore $\frac{6^{10}-1}{5\cdot 5}$ is primitive.

For $n=11$, $a=\frac{6^{11}-1}{5}=23\cdot 3154757$. We checked that 23 is complete and 3154757 is prime and incomplete, therefore primitive. So $\frac{6^{11}-1}{5\cdot 23}$ is primitive. 

For $n=12$, $a=\frac{6^{12}-1}{5}=5\cdot 7\cdot 13\cdot 31\cdot 37\cdot 43\cdot 97$. We know that $7\cdot 31\cdot 43=\frac{6^6-1}{5}$ is primitive so at least on of these factors have to be removed. We checked that $\frac{a}{7},\frac{a}{31}$ are incomplete and $\frac{a}{43}$ is complete. Thus we cannot remove the factor 43 to get a primitive number. Then $\frac{a}{7\cdot 13},\frac{a}{7\cdot 37},\frac{a}{7\cdot 97}$ are complete and $\frac{a}{7\cdot 31}$ is incomplete. Also 
$\frac{a}{13\cdot 31},\frac{a}{13\cdot 97},\frac{a}{31\cdot 37},\frac{a}{31\cdot 97}$ are complete. This implies that $\frac{a}{7\cdot 31}=\frac{6^{12}-1}{5\cdot 7\cdot 31}$ is primitive, and this is the only divisor of $a$ (other than $7\cdot 31\cdot 43$) which is primitive. 

For $n=13$, $a=\frac{6^{13}-1}{5}=760891\cdot 3443$. Both prime factors are complete, therefore $\frac{6^{13}-1}{5}$ is primitive. 

For $n=14$, $a=\frac{6^{14}-1}{5}=7^2\cdot 29\cdot 197\cdot 55987$. The number $55987=\frac{6^7-1}{5}$ is primitive, so this factor has to be removed. We checked that $\frac{a}{55987}$ is complete, therefore we do not get new primitive numbers. See also Remark \ref{rem4.10}. 

For $n=15$, $a=\frac{6^{15}-1}{5}=5\cdot 43\cdot 311\cdot 1171\cdot 1201$. The factor 5 has to be removed. We checked that $\frac{a}{5}$ is incomplete and 
$\frac{a}{5\cdot 43},\frac{a}{5\cdot 311},\frac{a}{5\cdot 1171},\frac{a}{5\cdot 1201}$ are complete. Therefore $\frac{6^{15}-1}{5\cdot 5}$ is primitive. 

For $n=16$, $a=\frac{6^{16}-1}{5}=7\cdot 17\cdot 37\cdot 1297\cdot 98801$. The number $7\cdot 37\cdot 1297=\frac{6^8-1}{5}$ is primitive, so one of these factors has to be removed. We checked that $\frac{a}{7},\frac{a}{37},\frac{a}{1297}$ are incomplete. 
Then we checked that $\frac{a}{7\cdot 17}$ is incomplete and $\frac{a}{7\cdot 37},\frac{a}{7\cdot 1297},\frac{a}{7\cdot 98801}$ are complete. This implies that $\frac{a}{7\cdot 17}=\frac{6^{16}-1}{5\cdot 7\cdot 17}$ is primitive, because we cannot drop any more factors.  
Also we checked that $\frac{a}{17\cdot 37},\frac{a}{17\cdot 1297},\frac{a}{17\cdot 98801}$ are incomplete and $\frac{a}{37\cdot 1297},\frac{a}{37\cdot 98801},\frac{a}{1297\cdot 98801}$ are complete. We see now that $\frac{a}{17\cdot 37}=\frac{6^{16}-1}{5\cdot 17\cdot 37}$, $\frac{a}{17\cdot 1297}=\frac{6^{16}-1}{5\cdot 17\cdot 1297},\frac{a}{17\cdot 98801}=\frac{6^{16}-1}{5\cdot 17\cdot 98801}$ are primitive.

We can go on like that for larger values of $n$. 

A nice example is for $n=20$. Then $a=\frac{6^{20}-1}{3}=5^2\cdot 11\cdot 17\cdot 31\cdot 41\cdot 61681$. And we discover a primitive number $5^2\cdot 41\cdot 618681$ which is not square free, thus disproving a conjecture formulated by the first author in \cite{DuHa16}. 

\end{example}

\subsection{Composite numbers}\label{sec2.6}

\begin{lemma}\label{lem2.24}
Let $a,b > 1$ be odd numbers. Assume that $o_g(ab) \geq \frac{\frac{a}{g-1}-\frac{2}{g}-1+g}{\frac{g}{2}}o_g(b)$. Then $ab$ is not primitive.
\end{lemma}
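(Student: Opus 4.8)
The plan is to argue by contradiction: assume $ab$ is primitive, and derive an inequality on $o_g(ab)$ that is incompatible with the hypothesis. The starting point is Proposition~\ref{prop2.23}: if $ab$ is primitive, then it has an extreme cycle $C$, which is a coset $x_0 G_{ab,g}$ of the group generated by $g$ in $U(\Z_{ab})$, its length equals $o_g(ab)$, every cycle point is coprime to $ab$, and $\gcd(C)=1$. Moreover, by Lemma~\ref{lemi1.4}, every cycle point $x_j$ lies in the interval $(0,\frac{ab}{g-1}]$, and since $ab$ is primitive it is coprime to $g$ (Lemma~\ref{lem2.8}), hence not divisible by $g-1$ in the relevant sense so we actually have $0<x_j<\frac{ab}{g-1}$.

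First I would reduce modulo $a$. Consider the reduction map $U(\Z_{ab})\to U(\Z_a)$; the image of $G_{ab,g}$ is $G_{a,g}$, and the image of the cycle $C=\{x_0,\dots,x_{p-1}\}$ (with $p=o_g(ab)$) is the coset $\bar x_0 G_{a,g}$, which has size $o_g(a)$. Thus the residues $x_j \bmod a$ take exactly $o_g(a)$ distinct values, each attained $p/o_g(a)$ times. The key geometric input is the relation between consecutive cycle points: whenever $x_{j+1}=(x_j + ab)/g$ (digit $ab$), the value $x_{j+1}$ is bounded by $\frac{1}{g}(\frac{ab}{g-1}+ab)$, and in general the structure of the cycle forces points with a prescribed residue mod $a$ to be spaced out. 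The idea is that among the $p/o_g(a)$ cycle points congruent to a fixed residue $r$ mod $a$, consecutive such points (in the cyclic order induced by multiplication by $g$) differ by a controlled amount, while all of them are squeezed into $(0,\frac{ab}{g-1})$; this yields a lower bound on $o_g(a)$ of the shape $o_g(a) \le \big(\text{something}\big)$, equivalently an \emph{upper} bound forcing $o_g(ab)=p$ to satisfy $p < \frac{\frac{a}{g-1}-\frac 2g -1+g}{g/2}\, o_g(b)$, contradicting the hypothesis.

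The main obstacle — and the part I would have to be careful with — is extracting the precise counting/spacing estimate that produces exactly the constant $\frac{\frac{a}{g-1}-\frac 2g-1+g}{g/2}$. Concretely: one wants to show that the map $x_j \mapsto x_j \bmod a$, composed with the $g$-multiplication dynamics, cannot collapse too many cycle points into the same residue class without violating the bound $x_j<\frac{ab}{g-1}$ together with $x_j>0$ and the digit recursion $x_{j+1}\in\{x_j/g,(x_j+ab)/g\}$. The estimate should come from tracking, along a block of consecutive steps that returns to a fixed residue mod $a$, how the "size" $x_j$ transforms: each digit-$0$ step divides by $g$, each digit-$ab$ step does $x\mapsto (x+ab)/g$, and a return to the same residue mod $a$ after $k$ steps corresponds to $k$ being a multiple of $o_g(a)$'s complement within $p$; bounding the number of such returns by comparing with the length of $(0,\frac{ab}{g-1})$ measured in units of $\frac{g}{2}$ (the gap coming from the digit set $\{0,ab\}$ having spread $ab/2$, scaled appropriately) gives the claimed inequality. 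Once that spacing lemma is in hand, the contradiction with $o_g(ab)\ge \frac{\frac a{g-1}-\frac 2g-1+g}{g/2}o_g(b)$ and the relation $o_g(ab)=\lcm(o_g(a),o_g(b))$ (Proposition~\ref{prop2.17}), hence $o_g(ab)\ge o_g(b)$ and $o_g(a)\mid o_g(ab)$, closes the argument.
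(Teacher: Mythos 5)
There is a genuine gap here: your write-up is a plan rather than a proof, and the one step you defer --- ``extracting the precise counting/spacing estimate that produces exactly the constant $\frac{\frac{a}{g-1}-\frac 2g-1+g}{g/2}$'' --- is the entire content of the lemma. Moreover, the reduction you propose goes in the wrong direction. You reduce the cycle modulo $a$, so the fibers of your map have size $o_g(ab)/o_g(a)$ and consist of points spaced by $a$ inside $(0,\frac{ab}{g-1})$; counting those would produce a bound involving $\frac{b}{g-1}$ and $o_g(a)$, which does not match the hypothesis (which pairs $\frac{a}{g-1}$ with $o_g(b)$). The correct move, and the one the paper makes, is to reduce modulo $b$: the map $h\colon x_0G_{ab}\to (x_0 \bmod b)G_b$ is $M$-to-$1$ with $M=o_g(ab)/o_g(b)$, so the fiber over $x_0\bmod b$ consists of $M$ cycle points of the form $y_0+kb$ with $k\in\{0,\dots,a-1\}$, all of which must lie below $\frac{ab}{g-1}$ by Lemma \ref{lemi1.4}.

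Two further concrete ingredients, absent from your sketch, are what turn this into the stated constant. First, Lemma \ref{lem2.3} says every cycle point is $\equiv 0$ or $\equiv -ab \pmod g$; since $b$ is invertible mod $g$, the admissible values of $k$ fall into at most two residue classes mod $g$, so each block of $g$ consecutive integers contains at most two of them, forcing the largest admissible $k$ to satisfy $k\geq g(\frac M2-1)+1$. Second, because $ab$ is primitive, $b$ is \emph{complete}, and Proposition \ref{pr2.14} (applied in contrapositive form to the coset $(x_0\bmod b)G_b$ in $U(\Z_b)$) guarantees that this coset contains an element exceeding $\frac{2b}{g}$, so one may take $y_0>\frac{2b}{g}$. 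Combining, $y_0+kb>\frac{2b}{g}+\bigl(g(\frac M2-1)+1\bigr)b\geq \frac{ab}{g-1}$ under the hypothesis on $M$, contradicting the bound on cycle points. Your appeal to ``the digit set $\{0,ab\}$ having spread $ab/2$'' does not substitute for either of these steps, and without them the inequality cannot be recovered.
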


\begin{proof}
Suppose that $ab$ is primitive. Then $a, b$ are relatively prime with $g$, because otherwise $ab$ is not relatively prime with $g$, so $ab$ cannot be primitive, by Lemma \ref{lem2.8}. By Proposition \ref{prop2.23}, there exists an extreme cycle $C$ and it is equal to a coset $x_0G_{ab}$ of the multiplicative group generated by $g$ in $U(\mathbb{Z}_{ab}).$ Consider the map $h : G_{ab} \rightarrow G_b, h(x) = x(\mod b)$. Then $h$ is a homomorphism and it is onto. Let $|G_{ab}| = o_g(ab) = Mo_g(b) = M|G_b|$, so that $h$ is an $M$-to-$1$ map, where $M \geq \frac{\frac{a}{g-1}-\frac{2}{g}-1+g}{\frac{g}{2}}$. Then the map $h' : x_0G_{ab} \rightarrow (x_0(\mod b))G_b,  h'(x_0x) = (x_0x)(\mod b)$, is also an $M$-to-$1$ map.

So there are exactly $M$ elements in $x_0G_{ab}$ which are mapped into $x_0(\mod b)$. These elements can be written $x_0(\mod b) + kb(\mod ab)$ for $M$ different values of $k$, each in the set $\{0, \dots, a-1\}$. Since $b$ is complete (because $ab$ is primitive), using Proposition \ref{pr2.14}, the coset $(x_0(\mod b))G_b$ contains an element greater than $\frac{2b}{g}$, and therefore we can assume that $y_0 := x_0(\mod b) > \frac{2b}{g}.$ 

From Lemma \ref{lem2.3}, we know that the cycle points are congruent to $0$ or $-ab$ modulo $g$. So $y_0+kb \equiv 0$ or $-ab$ modulo $g$ for all $M$ values of $k$ such that $y_0 + kb$ is in the extreme cycle. Since $b$ is relatively prime with $g$ it has a multiplicative  inverse $c$ in $\mathbb{Z}_{g}^{\times}$ and we have that $k \equiv -cy_0(\mod g)$ or $c(-ab-y_0)(\mod g)$. Therefore the values of $k$ here belong to only two equivalence classes modulo $g$, so in each set $A_n:=\{gn, gn+1, \dots, gn+(g-1)\}$ there are at most two values of $k$. So there are at most two values of $k$ in $A_0$, then at most two values of $k$ in $A_1$, and so on, and we must exhaust $M$ values of $k$. If $M$ is even, then we have at most $2(\frac{M}2-1)=M-2$ values of $k$ in $A_0\cup\dots \cup A_{\frac{M}2-2}$ and there are still two values of $k$ left. Therefore, if we take the largest such $k$, $k \geq g(\frac{M}{2}-1)+1$. If $M$ is odd, then a similar argument shows that $k \geq g(\frac{M-1}{2})$. In both cases, $k \geq g(\frac{M}{2}-1)+1$. Then $$y_0+kb > \frac{2b}{g} + (g(\frac{M}{2}-1)+1)b \geq \frac{ab}{g-1},$$ and this contradicts the fact that an extreme cycle is contained in $[0, \frac{ab}{g-1}],$ by Lemma \ref{lemi1.4}. 
\end{proof}

\begin{theorem}\label{th2.26}
Let $p_1, \dots, p_r$ be distinct odd primes. For $i \in \{1, \dots, r\}$, let $j_i \geq 0$ be the largest number such that $p_{i}^{j_i}$ divides $\lcm(o_g(p_1),\dots,o_g(p_r))$. Assume that $p_{1}^{\iota_{g}(p_1)+j_1} \dots p_{r}^{\iota_{g}(p_r)+j_1}$ is complete. Then $p_{1}^{k_1} \dots p_{r}^{k_r}$ is complete for any $k_1, \dots k_r \geq 0.$ 
\end{theorem}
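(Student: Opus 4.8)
The plan is to argue by contradiction: assume $M:=p_1^{k_1}\cdots p_r^{k_r}$ is incomplete and derive a contradiction with the hypothesis that $N_0:=p_1^{\iota_g(p_1)+j_1}\cdots p_r^{\iota_g(p_r)+j_r}$ is complete. Since $M$ is incomplete, the characterization of incomplete numbers as those divisible by a primitive number gives a primitive divisor $d$ of $M$; as every prime factor of $d$ divides $M$, I can write $d=p_1^{a_1}\cdots p_r^{a_r}$ with $0\le a_i\le k_i$ (and I lose nothing by assuming every $p_i$ is coprime to $g$, since a prime dividing $g$ cannot divide the primitive number $d$, by Lemma \ref{lem2.8}). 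The whole theorem reduces to the claim that $a_i\le\iota_g(p_i)+j_i$ for every $i$, i.e. $d\mid N_0$: once this is known, $N_0/d$ is an odd integer, so $N_0$ is an odd multiple of the incomplete number $d$ and is incomplete by Lemma \ref{lem2.5}, contradicting the hypothesis; hence $M$ is complete. (In particular the cases where already $k_i\le\iota_g(p_i)+j_i$ for all $i$, so $M\mid N_0$, are covered, since a divisor of a complete number is complete by the contrapositive of Lemma \ref{lem2.5}.)

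To establish the claim, I would fix $i$ and suppose, for contradiction, that $a_i\ge\iota_g(p_i)+j_i+1$. Because $\iota_g(p_i)\ge1$, this already forces $a_i\ge2$, so $p_i^2\mid d$ and $b:=d/p_i>1$ is odd. The first step is to compute $o_g(d)$ relative to $o_g(b)$ and show $o_g(d)=p_i\,o_g(b)$. I would iterate Proposition \ref{prop2.17} to get $o_g(d)=\lcm\big(o_g(p_k^{a_k}):p_k\mid d\big)$, apply Proposition \ref{prop2.19}, and use the elementary fact that $p_i\nmid o_g(p_i)$ (as $o_g(p_i)$ divides $|U(\Z_{p_i})|=p_i-1$). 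Then $v_{p_i}\big(o_g(p_i^{a_i})\big)=a_i-\iota_g(p_i)\ge j_i+1$, while $v_{p_i}\big(o_g(p_k^{a_k})\big)=v_{p_i}\big(o_g(p_k)\big)\le j_i$ for $k\ne i$, and $v_{p_i}\big(o_g(p_i^{a_i-1})\big)=a_i-1-\iota_g(p_i)\ge j_i$ still dominates the latter; and the passage from $p_i^{a_i}$ to $p_i^{a_i-1}$ changes no valuation at a prime $q\ne p_i$. Hence $v_{p_i}(o_g(d))=v_{p_i}(o_g(b))+1$ and $v_q(o_g(d))=v_q(o_g(b))$ for $q\ne p_i$, giving $o_g(d)=p_i\,o_g(b)$.

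The second step is to invoke Lemma \ref{lem2.24} with $a=p_i$, $b=d/p_i$. Its hypothesis $o_g(ab)\ge\frac{\frac{a}{g-1}-\frac2g-1+g}{g/2}o_g(b)$ becomes, using $o_g(ab)=o_g(d)=p_i\,o_g(b)$, the numerical inequality $p_i\big(\frac g2-\frac1{g-1}\big)\ge g-1-\frac2g$, and for every odd prime $p_i\ge3$ and even $g\ge4$ the left-hand side is already $\ge3\big(\frac g2-\frac1{g-1}\big)\ge g$ (since $g(g-1)\ge6$), so it holds. Lemma \ref{lem2.24} then says $d=ab$ is not primitive — contradicting the choice of $d$. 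This proves the claim and the theorem.

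The delicate point is the identity $o_g(d)=p_i\,o_g(b)$ — concretely, that the ratio $o_g(d)/o_g(d/p_i)$ does not collapse to $1$. Such a collapse happens exactly when some other prime factor $p_k$ of $d$ has order divisible by $p_i$, and the assumption $a_i\ge\iota_g(p_i)+j_i+1$ is precisely what rules this out: $j_i$ is by definition the exact power of $p_i$ in $\lcm(o_g(p_1),\dots,o_g(p_r))$, so no $o_g(p_k)$ with $k\ne i$ carries as many as $a_i-\iota_g(p_i)$ factors of $p_i$, whereas $o_g(p_i^{a_i})$ is forced to. Everything else — the reduction through Lemma \ref{lem2.5}, the threshold check for Lemma \ref{lem2.24}, and the remark that the contradiction hypothesis automatically yields $a_i\ge2$ and hence $d/p_i>1$ — is routine.
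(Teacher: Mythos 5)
Your proof is correct and follows essentially the same route as the paper: reduce to a primitive divisor, observe that some exponent must exceed $\iota_g(p_i)+j_i$, compute the resulting multiplicative jump in the order, and invoke Lemma \ref{lem2.24} to contradict primitivity. The only difference is cosmetic: the paper splits off the entire excess power $a=p_1^{k_1-\iota_g(p_1)-j_1}$ and reads the order identity $o_g(ab)=a\,o_g(b)$ off Proposition \ref{pr2.24}, whereas you peel off a single factor $a=p_i$ and verify $o_g(d)=p_i\,o_g(d/p_i)$ by a valuation count; both choices satisfy the threshold in Lemma \ref{lem2.24} for the same elementary numerical reason.
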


\begin{proof}
Suppose there are some numbers $k_1, k_2, \dots, k_r \geq 0$ such that $m = p_1^{k_1} \dots p_r^{k_r}$ is not complete. Therefore, a proper divisor of this number has to be primitive, relabeling the powers $k_i$, we can assume $m$ is primitive. The hypothesis implies that for at least one $i$, $k_i \geq \iota_g(p_i) + j_i +1$. Relabeling again, we can assume $k_1 \geq \iota_g(p_1) + j_1+1$. We have, with Proposition \ref{pr2.24}: $$o_g(p_{1}^{k_1} \dots p_{r}^{k_r}) = p_{1}^{k_1-\iota_g(p_1)-j_1}o_g(p_1^{\iota_g(p_1)+j_1}p_{2}^{k_2} \dots p_{r}^{k_r}).$$
As in Lemma \ref{lem2.24}, let $a = p_{1}^{k_1-\iota_g(p_1)-j_1}, b = p_{1}^{\iota_g(p_1)+j_1}p_{2}^{k_2} \dots p_{r}^{k_r}$. We will show that $ab$ is not primitive by showing that $a > \frac{\frac{a}{g-1}-\frac{2}{g}-1+g}{\frac{g}{2}}$ for all $g$. Also, since $k_i \geq \iota_g(p_i) + j_i +1$, let $l:= k_1-\iota_g(p_1)-j_1 \geq 1$. So, we have 
$$p_1^l > \frac{\frac{p_1^l}{g-1}-\frac{2}{g}-1+g}{\frac{g}{2}} \iff \frac{g}{2}p_1^l - \frac{p_1^l}{g-1} > g-\frac{2}{g}-1 \iff \frac{p_1^l[g(g-1)-2]}{2(g-1)} > \frac{g^2-g-2}{g}$$ 
$$\iff p_1^l > \frac{2(g-1)}{g}.$$ 
Since $p_1$ is an odd prime and $l \geq 1$, $p_1^l > 2$ so it is always true that $ p_1^l > \frac{2(g-1)}{g}.$ Thus, $o_g(ab) = ao_g(b) > \frac{\frac{a}{g-1}-\frac{2}{g}-1+g}{\frac{g}{2}}o_g(b)$, so $ab$ is not primitive, a contradiction. 
\end{proof}

\begin{lemma}\label{lem2.27}
Let $m$ be incomplete and suppose that all extreme cycles for $m$ have length $o_g(m)$. Additionally, suppose that $o_g(d) < o_g(m)$ for all proper divisors $d$ of $m$. Then $m$ is primitive.  
\end{lemma}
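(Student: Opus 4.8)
The plan is to prove the contrapositive: assuming $m$ is incomplete but not primitive, I will derive a contradiction with one of the two hypotheses. If $m$ is incomplete and not primitive, then by the proposition characterizing incomplete numbers, $m$ is divisible by a primitive number $p$ with $p < m$ a proper divisor (if $p = m$ then $m$ is primitive, contrary to assumption). Write $m = pk$ with $k > 1$.

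First I would use Proposition \ref{prop2.23}(3) applied to the primitive number $p$: there is an extreme cycle $C_p$ for the digit set $\{0,p\}$, and $C_p$ is a coset of $G_{p,g}$ in $U(\Z_p)$, so $|C_p| = o_g(p)$. Next, multiplying through the cycle relations by $k$ (exactly as in the proof of Lemma \ref{lem2.5}), $kC_p$ is an extreme cycle for the digit set $\{0,pk\} = \{0,m\}$, and it has the same length $|kC_p| = o_g(p)$. Now invoke the first hypothesis: every extreme cycle for $m$ has length $o_g(m)$. Applying this to the cycle $kC_p$ gives $o_g(p) = o_g(m)$.

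The contradiction then comes from the second hypothesis. Since $p$ is a proper divisor of $m$ (as $k > 1$), the assumption that $o_g(d) < o_g(m)$ for all proper divisors $d$ of $m$ gives $o_g(p) < o_g(m)$, contradicting $o_g(p) = o_g(m)$. Hence no such primitive proper divisor exists, so $m$ must be primitive.

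The main obstacle — really the only subtlety — is making sure the incomplete number $m$ genuinely has a primitive \emph{proper} divisor rather than being primitive already; but that is immediate from the definition of "not primitive" together with the proposition that every incomplete number is divisible by some primitive number (if the only primitive divisor were $m$ itself, then all proper divisors would be complete, making $m$ primitive). Everything else is a direct application of the quoted results, so the argument should be short.
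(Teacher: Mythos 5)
Your argument is correct and is essentially the paper's own proof: write $m$ as a multiple of a primitive proper divisor $p$, lift an extreme cycle of $p$ (of length $o_g(p)$, via Proposition \ref{prop2.23}) to an extreme cycle of $m$ of the same length, and contradict the two hypotheses since $o_g(p) < o_g(m)$. Your extra care in checking that the primitive divisor is proper is a welcome clarification of a point the paper leaves implicit, but the route is the same.
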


\begin{proof}
Suppose to the contrary that $m$ is not primitive. Then $m = nk$, where $n$ is a primitive number and $k \in \mathbb{N}$. Then, with Proposition \ref{prop2.23}, $n$ has an extreme cycle $C$ of length $o_g(n)$. So $kC$ is an extreme cycle for $m$ of length $o_g(n)$, and since $o_g(n) < o_g(m)$, this contradicts that all cycles for $m$ have length $o_g(m)$. Thus $m$ is primitive. 
\end{proof}

\begin{lemma}\label{lem2.27.5}
The number of non-trivial cycle points for an odd number $m$ not divisible by $g-1$ is less than $$\min_{\textbf{n}} \Big\{2^n \big\lceil \frac{m}{(g-1)g^n} \big\rceil \Big\}.$$ 
$\lceil x \rceil$ represents the {\it ceiling }of $x$, i.e., the smallest integer larger than or equal to $x$. 
\end{lemma}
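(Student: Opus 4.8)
The plan is to bound the number of non-trivial cycle points by covering the interval containing them with the pieces of the attractor $X_L$ at a fixed level $n$, and then count integer points inside each piece. Recall from Lemma \ref{lemi1.4} that every non-trivial extreme cycle point lies in $X_L\cap\bz$, and since $m$ is not divisible by $g-1$ we actually have $0<x_0<\frac{m}{g-1}$. The set $X_L$ decomposes as $X_L=\bigcup_{l\in\{0,m\}}\sigma_l(X_L)$, and iterating this $n$ times we get $X_L=\bigcup \sigma_{l_1}\circ\sigma_{l_2}\circ\cdots\circ\sigma_{l_n}(X_L)$ over all $2^n$ choices of $(l_1,\dots,l_n)\in\{0,m\}^n$. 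Each such composite map is an affine contraction by $g^{-n}$, so each piece $\sigma_{l_1}\circ\cdots\circ\sigma_{l_n}(X_L)$ is a translate of $g^{-n}X_L$, hence is contained in an interval of length $g^{-n}\cdot\frac{m}{g-1}=\frac{m}{(g-1)g^n}$ (using $X_L\subseteq[0,\frac{m}{g-1}]$).

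The key step is then the counting: an interval of length $\ell$ contains at most $\lceil\ell\rceil$ integers. Actually I should be slightly careful here — a half-open or closed interval of length $\ell$ can contain up to $\lfloor\ell\rfloor+1$ integers, but since the cycle points satisfy the \emph{strict} inequalities $0<x_0<\frac{m}{g-1}$, the relevant sub-pieces sit inside the open interval $(0,\frac{m}{g-1})$; combined with the fact that one can choose the level-$n$ covering intervals to be half-open, each contains at most $\lceil\frac{m}{(g-1)g^n}\rceil$ integers. Summing over the $2^n$ pieces gives the bound $2^n\lceil\frac{m}{(g-1)g^n}\rceil$ on the number of non-trivial cycle points, for each fixed $n$; taking the minimum over $n$ gives the stated estimate. (One also notes the count is \emph{strict}: the point $0$ would be the only candidate forced out, but $0$ is the trivial cycle point, so among the level-$n$ pieces the one containing $0$ contributes at least one fewer non-trivial point, or more simply the interval $[0,\frac{m}{g-1})$ is the union and $0$ is excluded — this is what yields the strict inequality claimed.)

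The main obstacle is the bookkeeping around the strictness of the inequality and the off-by-one in ``number of integers in an interval of length $\ell$.'' One must argue carefully that the endpoints can be arranged so that no piece picks up a spurious extra integer, and that the global count is strict rather than merely $\leq$. The cleanest route is: the non-trivial cycle points lie in $\big(X_L\setminus\{0\}\big)\cap\bz\subseteq (0,\tfrac{m}{g-1})\cap\bz$, cover $X_L$ by the $2^n$ level-$n$ pieces each inside a half-open interval of length $\tfrac{m}{(g-1)g^n}$ hence each containing at most $\lceil\tfrac{m}{(g-1)g^n}\rceil$ integers, so $X_L\cap\bz$ has at most $2^n\lceil\tfrac{m}{(g-1)g^n}\rceil$ elements; removing the point $0$ (which does belong to $X_L\cap\bz$) drops the count of non-trivial points strictly below this bound. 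Since $n$ was arbitrary, the number of non-trivial cycle points is less than $\min_{n}\{2^n\lceil\tfrac{m}{(g-1)g^n}\rceil\}$.
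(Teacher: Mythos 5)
Your overall strategy is the same as the paper's: cover $X_L$ by the $2^n$ level-$n$ images of $[0,\frac{m}{g-1}]$, each an interval of length $L_n:=\frac{m}{(g-1)g^n}$, bound the number of integers in each piece by $\lceil L_n\rceil$, and get strictness by discarding the trivial point $0$. That last step (removing $0\in X_L\cap\bz$ to turn $\leq$ into $<$) is fine, and is in fact more explicit than what the paper writes. The one place where your argument has a real hole is the claim that ``one can choose the level-$n$ covering intervals to be half-open.'' Each piece $\sigma_{l_1}\circ\cdots\circ\sigma_{l_n}(X_L)$ genuinely contains the right endpoint of its covering interval, because that endpoint is the image of $\frac{m}{g-1}\in X_L$; so if you replace the closed intervals by half-open ones you may fail to cover $X_L\cap\bz$, and your observation that the cycle points lie in the open interval $\bigl(0,\frac{m}{g-1}\bigr)$ only disposes of the two global endpoints, not of the right endpoints of the $2^n$ subintervals. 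Without a further argument, a closed interval of length $L_n$ could a priori contain $\lfloor L_n\rfloor+1$ integers, which exceeds $\lceil L_n\rceil$ exactly when $L_n$ is an integer, so the per-interval count is not yet justified.

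The gap is easily closed, in either of two ways. The paper's route: the right endpoint of the interval indexed by $(l_0,\dots,l_{n-1})$ is $\frac{m(1+(g-1)\sum_k l_kg^k)}{(g-1)g^n}$, and since $g-1$ divides neither $m$ nor $1+(g-1)\sum_k l_kg^k\equiv 1\ (\mathrm{mod}\ (g-1))$, this endpoint is never an integer; hence the integers in the closed interval coincide with those in the half-open one and your count goes through. Simpler still: since $g-1\nmid m$, the length $L_n=\frac{m}{(g-1)g^n}$ is itself not an integer, so $\lfloor L_n\rfloor+1=\lceil L_n\rceil$ and even the closed interval contains at most $\lceil L_n\rceil$ integers --- no half-open device is needed at all. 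With either repair your proof is correct and essentially identical to the paper's.
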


\begin{proof}
The phrasing in the statement of the lemma, "number of non-trivial cycle points," refers to the total number of points among all non-trivial cycles. 

	We know from Lemma \ref{lemi1.4} that the cycle points are contained in the intersection of the attractor $X_L$ with $\mathbb{Z}$. Also, $X_L \subset [0, \frac{m}{g-1}]$. Therefore, 
	$$X_L \subset \bigcup_{a_0, a_1, \ldots, a_{n-1} \in \{0,m\}} \sigma_{a_{n-1}} \dots \sigma_{a_0} \big[0, \frac{m}{g-1} \big] $$
	$$ = \bigcup_{a_0, a_1, \ldots, a_{n-1} \in \{0,m\}} \Bigg[\frac{a_0+ga_1+\ldots+g^{n-1}a_{n-1}}{g^n},\frac{m}{(g-1)g^n} + \frac{a_0+ga_1+\ldots+g^{n-1}a_{n-1}}{g^n}\Bigg].$$
	
	The intervals in this union can be written as
\begin{equation}
\Bigg[\frac{m\sum_{k=0}^{n-1}l_kg^k}{g^n}, \frac{m\big(1+(g-1)\sum_{k=0}^{n-1}l_kg^k\big)}{(g-1)g^n}\Bigg]
\label{eqi2.27.6}
\end{equation}
with $l_0, \ldots l_{n-1} \in \{0,1\}$. 
	Because $m$ is not divisible by $g-1$ and $1+(g-1)\sum_{k=0}^{n-1}l_kg^k$ is prime with $g-1$, the right endpoint is never an integer.
	
	There are $2^n$ intervals at each iteration, and each one contains at most $\lceil \frac{m}{(g-1)g^n} \rceil$ integers in its interior, so we have at most $2^n\lceil \frac{m}{(g-1)g^n} \rceil$ in the union. The result follows from this.  
\end{proof}

\begin{lemma}\label{lem2.29}
Let $a,b\geq1$ be odd numbers. Assume that $o_g(ab) > 2^{\lceil \log_g {\frac{a}{g-1}} \rceil}o_g(b)$. Then $ab$ is not primitive. 
\end{lemma}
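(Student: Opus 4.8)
The plan is to mimic the proof of Lemma \ref{lem2.24}, but to replace its use of Proposition \ref{pr2.14} by the interval-covering count of Lemma \ref{lem2.27.5}, now carried out inside a single residue class modulo $b$. Assume for contradiction that $ab$ is primitive. By Lemma \ref{lem2.8}, $\gcd(ab,g)=1$, so $a$ and $b$ are both coprime to $g$; in particular $o_g(b)$ is defined, and since $g^{o_g(ab)}\equiv1\pmod{ab}$ forces $g^{o_g(ab)}\equiv1\pmod b$, we have $o_g(b)\mid o_g(ab)$. I also need $g-1\nmid ab$: since $g-1$ is incomplete (Lemma \ref{lem2.4.1}) while all of its proper divisors are $\le g-2$, hence complete (Lemma \ref{lem2.6}), the number $g-1$ is itself primitive, so the only primitive multiple of $g-1$ is $g-1$; and the case $ab=g-1$ is excluded by the hypothesis, since then $o_g(ab)=1$ while $2^{\lceil\log_g(a/(g-1))\rceil}o_g(b)\ge1$.

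By Proposition \ref{prop2.23}, any extreme cycle of $ab$ is a coset $C=x_0G_{ab,g}=\{x_0g^j\bmod ab:0\le j<o_g(ab)\}$ of length $o_g(ab)$, all of whose elements are units modulo $ab$; I identify each element with its unique cycle-point representative in $(0,\frac{ab}{g-1}]\cap\mathbb{Z}$ (unique since $\frac{ab}{g-1}<ab$). Put $y_0:=x_0\bmod b$, which is nonzero because $\gcd(x_0,b)=1$. Then $x_0g^j$ reduces to $y_0$ modulo $b$ precisely when $g^j\equiv1\pmod b$, i.e.\ when $o_g(b)\mid j$, and there are exactly $M:=o_g(ab)/o_g(b)$ such $j$ in $\{0,\dots,o_g(ab)-1\}$, each giving a distinct element of $C$. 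Hence exactly $M$ cycle points of $C$ lie in the arithmetic progression $y_0+b\mathbb{Z}$ (and all of them lie in $X_L\cap\mathbb{Z}$, by Lemma \ref{lemi1.4}).

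For the final step, set $n:=\lceil\log_g\frac{a}{g-1}\rceil$, which is $\ge0$ since $\frac{a}{g-1}\ge\frac1{g-1}>\frac1g$. As in the proof of Lemma \ref{lem2.27.5} with $m=ab$, the set $X_L$, hence the set of all cycle points, is covered by $2^n$ closed intervals each of length $\frac{ab}{(g-1)g^n}\le b$ (using $g^n\ge\frac{a}{g-1}$), and because $g-1\nmid ab$ the right endpoint of each such interval is not an integer. A closed interval of length at most $b$ whose right endpoint is not an integer contains at most one point of $y_0+b\mathbb{Z}$: two such points lie at distance a positive multiple of $b$, hence exactly $b$ apart, which would force the smaller to be the left endpoint and the larger to be the right endpoint, making the latter an integer. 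Therefore $y_0+b\mathbb{Z}$ meets $X_L\cap\mathbb{Z}$ in at most $2^n$ points, so $M\le2^n$, i.e.\ $o_g(ab)\le 2^{\lceil\log_g(a/(g-1))\rceil}o_g(b)$, contradicting the hypothesis; thus $ab$ is not primitive. I expect the delicate points to be the reduction to $g-1\nmid ab$ (so that the interval endpoints are genuinely non-integers, as needed from Lemma \ref{lem2.27.5}) and the exact choice of $n$ making the interval length at most $b$; the remainder is a direct transcription of the arguments in Lemmas \ref{lem2.24} and \ref{lem2.27.5}.
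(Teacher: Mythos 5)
Your proof is correct and follows essentially the same route as the paper's: reduce the coset $x_0G_{ab,g}$ modulo $b$ to get exactly $M=o_g(ab)/o_g(b)$ cycle points in a single residue class modulo $b$, then use the $2^n$ covering intervals of Lemma \ref{lem2.27.5} (each of length at most $b$) to force $M\le 2^n$, contradicting the hypothesis. You are in fact a bit more careful than the paper on two points it glosses over, namely verifying that $g-1\nmid ab$ (which Lemma \ref{lem2.27.5} needs for the non-integer right endpoints) and handling the boundary case where an interval has length exactly $b$.
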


\begin{proof}
Assume that $ab$ is primitive. Take $n=\lceil \log_g {\frac{a}{g-1}} \rceil$. Then $g^n \geq \frac{a}{g-1}$, so $\frac{ab}{(g-1)g^n} \leq b$, so the length of the intervals in (\ref{eqi2.27.6}) is at most $b$. Since $ab$ is primitive, there is an extreme cycle $C$ which is a coset $x_0G_{ab}$, by Proposition \ref{prop2.23}. 

	Now, as in the proof of Lemma \ref{lem2.24}, define the map $h: x_0G_{ab} \rightarrow x_0G_b, x_0x \mapsto (x_0x)(\mod b)$. We saw that this is an $M$-to-1 map. Note that $M=o_g(ab)/o_g(b) > 2^n$. There are $M$ cycle points in $x_0G_{ab}=C$ which are mapped by $h$ into $x_0$, i.e., there are $M$ values of $k$ such that $x_0(\mod b) + kb$ is in the cycle $C$. However, the intervals in (\ref{eqi2.27.6}) contain at most one such cycle point, since their length is less than $b$ and the difference between any two such points is at least $b$. We have $2^n < M$ such intervals, and this leads to a contradiction. 
\end{proof}

\begin{theorem}\label{th2.30}
Let $m$ be an odd number. Assume the following conditions are satisfied:
\begin{enumerate}
	\item For every proper divisors $d|m, d<m$, the number $d$ is complete.
	\item The following inequality holds: $$o_g(m) > \min_{\textbf{n}} \Big\{2^n \big\lceil \frac{m}{(g-1)g^n} \big\rceil \Big\}.$$
\end{enumerate}
Then $m$ is complete. If only condition (ii) is satisfied, then $m$ is not primitive. 
\end{theorem}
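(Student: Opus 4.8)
The plan is to deduce both statements from the counting bound in Lemma~\ref{lem2.27.5} together with the structural fact (Proposition~\ref{prop2.23}) that a primitive number has extreme cycles of length exactly $o_g(m)$. I will prove the last assertion (``if only (ii) is satisfied, then $m$ is not primitive'') first, since it uses only hypothesis (ii), and then derive completeness from it using (i).

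For the last assertion, suppose to the contrary that $m$ is primitive. I first check that $g-1\nmid m$. If $m=g-1$, then $g\equiv 1\pmod{g-1}$ gives $o_g(m)=1$, whereas the quantity $\min_{\textbf{n}}\{2^n\lceil m/((g-1)g^n)\rceil\}$ is a positive integer and hence is at least $1$; so (ii) would assert $1>\,(\text{a number}\ge 1)$, which is impossible. If instead $g-1$ is a proper divisor of $m$, then $g-1$ is incomplete by Lemma~\ref{lem2.4.1}, contradicting that every proper divisor of the primitive number $m$ must be complete. Hence $g-1\nmid m$, so Lemma~\ref{lem2.27.5} applies: the total number of non-trivial cycle points for $m$ is strictly less than $\min_{\textbf{n}}\{2^n\lceil m/((g-1)g^n)\rceil\}$, which by (ii) is strictly less than $o_g(m)$. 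On the other hand, since $m$ is incomplete it has a non-trivial extreme cycle, and by Proposition~\ref{prop2.23} that cycle consists of exactly $o_g(m)$ distinct points (all of which are non-trivial cycle points); so $m$ has at least $o_g(m)$ non-trivial cycle points. This contradiction shows $m$ is not primitive.

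For the first statement, assume (i) and (ii) and suppose, for contradiction, that $m$ is incomplete. Iterating the definition of primitivity --- an incomplete number is either primitive or has an incomplete proper divisor, and divisors strictly decrease --- yields a primitive divisor $p$ of $m$. By what was just proved, $m$ itself is not primitive, so $p$ is a \emph{proper} divisor of $m$; but then hypothesis (i) says $p$ is complete, contradicting that $p$ is primitive. Therefore $m$ is complete.

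I do not expect a real obstacle: the proof is essentially a chaining of Lemma~\ref{lem2.27.5}, Proposition~\ref{prop2.23} and Lemma~\ref{lem2.4.1}. The only places needing care are the edge case $m=g-1$ (excluded by hypothesis (ii), because the minimum there is a positive integer while $o_g(g-1)=1$) and making sure the extreme cycle furnished by Proposition~\ref{prop2.23} genuinely contributes $o_g(m)$ distinct non-trivial points, so that the inequality chain ``$\#\{\text{non-trivial cycle points}\}<\min_{\textbf{n}}\{\cdots\}<o_g(m)\le \#\{\text{non-trivial cycle points}\}$'' actually closes to a contradiction.
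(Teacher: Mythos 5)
Your proof is correct and follows essentially the same route as the paper: combine Proposition \ref{prop2.23} (a primitive number has an extreme cycle of length exactly $o_g(m)$) with the counting bound of Lemma \ref{lem2.27.5} to rule out primitivity under (ii), then use (i) to upgrade ``not primitive'' to ``complete.'' In fact you are somewhat more careful than the paper, which silently applies Lemma \ref{lem2.27.5} without first verifying its hypothesis that $g-1\nmid m$; your check of that hypothesis (via Lemma \ref{lem2.4.1} and the observation that $m=g-1$ violates (ii)) is a genuine and welcome addition.
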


\begin{proof}
	Suppose (i) and (ii) hold. Then $m$ is either complete or primitive. If $m$ is primitive, then by  Proposition \ref{prop2.23} there exists a cycle of length $o_g(m)$. Since $o_g(m) > \min_{n\in \bn} \Big\{2^n \big\lceil \frac{m}{(g-1)g^n} \big\rceil \Big\}$, this contradicts Lemma \ref{lem2.27.5}. Thus $m$ is complete. 
	
	Suppose only (ii) holds. By the same argument, $m$ is not primitive. 
\end{proof} 

\begin{corollary}\label{cor2.32}
Let $m$ be an odd number. If $$o_g(m)>2^{\lceil \log_g {\frac{m}{g-1}} \rceil}$$
or in particular, if $$o_g(m) > 2\left({\frac{m}{g-1}}\right)^{\frac{1}{\log_2 g}}$$ then $m$ is not primitive. 
\end{corollary}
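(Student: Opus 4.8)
The plan is to recognize that the first inequality is, once a few boundary cases are peeled off, exactly Lemma \ref{lem2.29} applied with $b=1$ (equivalently, a consequence of the ``only (ii)'' half of Theorem \ref{th2.30}), and then to verify that the second, more explicit, inequality is just the first one rewritten after a change of base for the logarithm.

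First I would dispose of the degenerate cases, which is necessary because the counting argument behind these results (Lemma \ref{lem2.27.5}) presupposes $g-1\nmid m$. If $g-1\mid m$ and $m>g-1$, then $m$ has the incomplete proper divisor $g-1$ (Lemma \ref{lem2.4.1}), so $m$ is not primitive; and if $m=g-1$, then $o_g(m)=1$ while $2^{\lceil\log_g(m/(g-1))\rceil}=2^{0}=1$, so the hypothesis is not met. If instead $g-1\nmid m$ and $m\le g-1$, then $1\le m\le g-2$ and $m$ is complete by Lemma \ref{lem2.6}, hence not primitive. So from now on I may assume $g-1\nmid m$ and $m>g-1$.

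For the main claim I would invoke Lemma \ref{lem2.29} with $a=m$ and $b=1$: since $o_g(1)=1$, its hypothesis $o_g(ab)>2^{\lceil\log_g(a/(g-1))\rceil}o_g(b)$ reads precisely $o_g(m)>2^{\lceil\log_g(m/(g-1))\rceil}$, and its conclusion is that $m=ab$ is not primitive. (Alternatively one can argue through Theorem \ref{th2.30}: setting $n_0:=\lceil\log_g\frac{m}{g-1}\rceil\ge 1$, the inequality $g^{n_0}\ge m/(g-1)$ forces $\big\lceil\frac{m}{(g-1)g^{n_0}}\big\rceil=1$, whence
$$\min_{\textbf{n}}\Big\{2^{n}\big\lceil\tfrac{m}{(g-1)g^{n}}\big\rceil\Big\}\le 2^{n_0}=2^{\lceil\log_g\frac{m}{g-1}\rceil}<o_g(m),$$
and the ``only (ii)'' part of Theorem \ref{th2.30} applies because $g-1\nmid m$.)

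Finally, for the ``in particular'' clause I would show the second bound implies the first. Writing $u:=\frac{m}{g-1}$ and using $u=2^{\log_2 u}$ together with the change of base $\log_g u=\log_2 u/\log_2 g$, one obtains $u^{1/\log_2 g}=2^{\log_g u}$, hence $2\big(\tfrac{m}{g-1}\big)^{1/\log_2 g}=2^{1+\log_g(m/(g-1))}$. Since $\lceil x\rceil\le x+1$ for every real $x$, this quantity is at least $2^{\lceil\log_g(m/(g-1))\rceil}$, so $o_g(m)>2\big(\tfrac{m}{g-1}\big)^{1/\log_2 g}$ implies $o_g(m)>2^{\lceil\log_g\frac{m}{g-1}\rceil}$ and the main claim applies. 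The whole argument is bookkeeping; the only step needing attention is the hand-treatment of $g-1\mid m$ and of $m\le g-1$, where Lemma \ref{lem2.27.5} (and hence Lemma \ref{lem2.29} and Theorem \ref{th2.30}) is unavailable and one falls back on Lemmas \ref{lem2.4.1} and \ref{lem2.6}.
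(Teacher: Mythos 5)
Your proposal is correct and follows essentially the same route as the paper: the paper sets $n=\lceil\log_g\frac{m}{g-1}\rceil$, notes $\lceil\frac{m}{(g-1)g^n}\rceil=1$ and $2^n\leq 2^{\log_g\frac{m}{g-1}+1}=2(\frac{m}{g-1})^{1/\log_2 g}$, and cites the ``only (ii)'' part of Theorem \ref{th2.30} --- exactly your parenthetical alternative, and your primary route via Lemma \ref{lem2.29} with $b=1$ is the same counting argument in disguise. Your explicit handling of the cases $g-1\mid m$ and $m\leq g-1$ (where Lemma \ref{lem2.27.5} does not apply) is a point of care the paper leaves implicit, but it does not change the substance of the argument.
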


\begin{proof}
Let $n = \lceil \log_g {\frac{m}{g-1}} \rceil$. Then $g^n \geq \frac{m}{g-1}$ so $\lceil \frac{m}{(g-1)g^n} \rceil = 1.$ Furthermore, 
$$2^n \lceil \frac{m}{(g-1)g^n} \rceil = 2^n \leq 2^{\log_g{\frac{m}{g-1}} + 1} = 2\left({\frac{m}{g-1}}\right)^{\frac{1}{\log_2 g}}. 
$$
The rest follows from Theorem \ref{th2.30}. 
\end{proof}

\begin{corollary}\label{cor2.34}
Let $p_1, \ldots, p_r$ be distinct simple prime numbers strictly larger than $g-1$. Assume the following conditions are satisfied: 
\begin{enumerate}
	\item For any proper subset $F \subset \{1, \ldots, r\}$ and any powers $k_i \geq 0, i \in F$, the number $\prod_{i\in F} p_i^{k_i}$ is complete.
	\item None of the numbers $o_g(p_1), \dots ,o_g(p_r)$ is divisible by any of the numbers $p_1, \ldots, p_r$. 
	\item The following equation is satisfied:
	\begin{equation}
	\lcm(o_g(p_1), \ldots, o_g(p_r)) > 2^{\lceil \log_{g} \frac{p_1\dots p_r}{g-1} \rceil}
	\label{eq2.5}
	\end{equation}
\end{enumerate}
Then $p_1^{k_1} \dots p_r^{k_r}$ is complete. 
\end{corollary}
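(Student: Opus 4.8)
The plan is to reduce to Corollary \ref{cor2.32} (equivalently, to Theorem \ref{th2.30}\,(ii)) by showing that any potential counterexample $m=p_1^{k_1}\dots p_r^{k_r}$ must in fact be of the form $m=p_1\dots p_r$ (all exponents equal to $1$), and then to check that hypothesis \eqref{eq2.5} forces $o_g(m)>2^{\lceil \log_g \frac{m}{g-1}\rceil}$, which by Corollary \ref{cor2.32} prevents $m$ from being primitive. First I would argue by contradiction: suppose some $p_1^{k_1}\dots p_r^{k_r}$ is not complete. Then a proper divisor is primitive, and after relabeling I may assume $m=p_1^{k_1}\dots p_r^{k_r}$ itself is primitive, with every $k_i\geq 1$ (if some $k_i=0$ we would be inside a proper subset $F$, contradicting hypothesis (i)).

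Next I would pin down the exponents. Each $p_i$ is simple, so $\iota_g(p_i)=1$; and hypothesis (ii) says no $p_i$ divides $\lcm(o_g(p_1),\dots,o_g(p_r))$, so in the notation of Proposition \ref{pr2.24} each $j_i=0$. If some $k_i\geq 2$, then $k_i-\iota_g(p_i)-j_i=k_i-1\geq 1$, and Proposition \ref{pr2.24} (or directly Theorem \ref{th2.26} with these values of $\iota_g,j_i$) shows that $p_1^{k_1}\dots p_r^{k_r}$ is not primitive — indeed Theorem \ref{th2.26} applies verbatim here, since its hypothesis ``$p_1^{\iota_g(p_1)+j_1}\dots p_r^{\iota_g(p_r)+j_r}=p_1\dots p_r$ is complete'' is exactly the case $F=\{1,\dots,r\}$... which is \emph{not} a proper subset, so I cannot invoke (i) for it directly. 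This is the point that needs care: I must instead observe that if all $k_i\geq 1$ and at least one $k_i\geq 2$, then Lemma \ref{lem2.29} applied with $a=p_1^{k_1-1}\dots p_r^{k_r-1}$ (which is $>1$) and $b=p_1\dots p_r$, together with $o_g(m)=a\cdot o_g(p_1\dots p_r)$ from Proposition \ref{pr2.24}, yields $o_g(ab)=a\,o_g(b)>2^{\lceil \log_g \frac{a}{g-1}\rceil}o_g(b)$ whenever $a>2^{\lceil \log_g\frac{a}{g-1}\rceil}$, i.e. $a>2(\frac{a}{g-1})^{1/\log_2 g}$, which holds for all $a\geq 3$ and $g\geq 4$ by the same elementary estimate used in the proof of Theorem \ref{th2.26}. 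Hence $ab=m$ is not primitive, a contradiction. So every $k_i=1$ and $m=p_1\dots p_r$.

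Finally, with $m=p_1\dots p_r$, Proposition \ref{pr2.24} gives $o_g(m)=\lcm(o_g(p_1),\dots,o_g(p_r))$ (all the prefactors $p_i^{\max\{1-j_i-\iota_g(p_i),0\}}=p_i^{\max\{1-0-1,0\}}=1$ vanish). Then hypothesis \eqref{eq2.5} reads exactly $o_g(m)>2^{\lceil \log_g \frac{p_1\dots p_r}{g-1}\rceil}=2^{\lceil \log_g \frac{m}{g-1}\rceil}$, so by Corollary \ref{cor2.32} the number $m$ is not primitive — contradicting that $m$ was taken to be primitive. Therefore no counterexample exists and $p_1^{k_1}\dots p_r^{k_r}$ is complete for all $k_1,\dots,k_r\geq 0$.

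I expect the main obstacle to be the bookkeeping in the middle step: making sure that the reduction from arbitrary exponents to all-ones exponents is handled by Lemma \ref{lem2.29} (or Theorem \ref{th2.26}'s inequality) rather than by an illegitimate appeal to hypothesis (i) on the full set $\{1,\dots,r\}$, and verifying that the elementary inequality $a>2(\frac{a}{g-1})^{1/\log_2 g}$ holds in the relevant range. Everything else is a direct assembly of Propositions \ref{pr2.24}, Corollary \ref{cor2.32}, and Lemma \ref{lem2.29}.
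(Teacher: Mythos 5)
Your proof is correct, and the overall skeleton is the same as the paper's (minimal counterexample with $\sum k_i$ smallest, hence $m=p_1^{k_1}\cdots p_r^{k_r}$ primitive with all $k_i\geq 1$; then compute $o_g(m)=p_1^{k_1-1}\cdots p_r^{k_r-1}\lcm(o_g(p_1),\dots,o_g(p_r))$ from simplicity and hypothesis (ii); then derive a contradiction with the counting bound). Where you diverge is in how that contradiction is extracted. The paper does it in one stroke: it sets $n=\lceil\log_g\frac{p_1\cdots p_r}{g-1}\rceil$, notes $\lceil\frac{p_1\cdots p_r}{(g-1)g^n}\rceil=1$, and uses the elementary inequality $\lceil a\rceil N\geq\lceil aN\rceil$ to get $o_g(m)>2^n\lceil\frac{p_1^{k_1}\cdots p_r^{k_r}}{(g-1)g^n}\rceil$ directly from \eqref{eq2.5}, which contradicts Theorem \ref{th2.30} for arbitrary exponents at once. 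You instead split into two cases: if some $k_i\geq 2$ you apply Lemma \ref{lem2.29} with $a=\prod p_i^{k_i-1}$, $b=p_1\cdots p_r$, using $o_g(ab)=a\,o_g(b)$ and $a>2^{\lceil\log_g\frac{a}{g-1}\rceil}$ (which holds since $a>g-1$); if all $k_i=1$ you invoke Corollary \ref{cor2.32} together with \eqref{eq2.5}. Both assemblies rest on the same underlying count (Lemma \ref{lem2.27.5}), so the difference is organizational, but your version has a small payoff: it makes visible that hypothesis \eqref{eq2.5} is only needed for the square-free case, the higher powers being killed unconditionally. You were also right to flag, and correctly sidestep, the temptation to cite Theorem \ref{th2.26} or hypothesis (i) for the full index set $\{1,\dots,r\}$, which is not a proper subset and hence not covered by (i).
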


\begin{proof}
Suppose there exists $k_1, \ldots, k_r$ such that $p_1^{k_1}\dots p_r^{k_r}$ is not complete. Then pick $k_1, \ldots, k_r$ such that $\sum_{i=1}^{r} k_i$ is as small as possible, with this property. Clearly, by (i) we can assume all $k_i \geq 1$. Then all proper divisors of $p_1^{k_1}\dots p_r^{k_r}$ are complete, because otherwise we could have picked smaller $\sum k_i$. So $m := p_1^{k_1} \dots p_r^{k_r}$ is primitive. By Propositions \ref{prop2.17} and \ref{prop2.19}, we have
$$o_g(m) = \lcm(o_g(p_1^{k_1}), \ldots, o_g(p_r^{k_r})) = \lcm(p_1^{k_1-1}o_g(p_1), \ldots, p_r^{k_r-1}o_g(p_r))$$
$$ = p_1^{k_1-1} \dots p_r^{k_r-1}\lcm(o_g(p_1),\ldots,o_g(p_r)).$$

From (iii), we get
$$p_1^{k_1-1} \dots p_r^{k_r-1}\lcm(o_g(p_1),\ldots,o_g(p_r)) > 2^{\lceil \log_{g} \frac{p_1\dots p_r}{g-1} \rceil} p_1^{k_1-1} \dots p_r^{k_r-1}. $$

As in Corollary \ref{cor2.32}, letting $n = \lceil \log_g \frac{p_1\dots p_r}{g-1} \rceil$, we have $g^n \geq \frac{p_1 \dots p_r}{g-1}$ and $\lceil \frac{p_1 \dots p_r}{(g-1)g^n} \rceil = 1$. Then
$$ 2^{\lceil \log_{g} \frac{p_1\dots p_r}{g-1} \rceil} p_1^{k_1-1} \dots p_r^{k_r-1} = 2^n p_1^{k_1-1} \dots p_r^{k_r-1} = 2^n \lceil \frac{p_1 \dots p_r}{(g-1)g^n} \rceil p_1^{k_1-1} \dots p_r^{k_r-1}  \geq 2^n \lceil \frac{p_1^{k_1} \dots p_r^{k_r}}{(g-1)g^n}\rceil.$$
We used the fact that, for $a>0$ and $N\in\bn$, $\lceil a\rceil N$ is an integer bigger than $aN$, so it is bigger than $\lceil aN\rceil$. 

Thus, we obtain that 
$$o_g(m)>2^n \lceil \frac{p_1^{k_1} \dots p_r^{k_r}}{(g-1)g^n}\rceil.$$

Since $m$ is primitive, this is a contradiction to Theorem \ref{th2.30}. 
\end{proof}

\begin{corollary}\label{cor2.35}
Let $g$ be a perfect square. Let $p_1, \ldots, p_r$ be distinct simple prime numbers strictly larger than $g-1$. Assume the following conditions are satisfied: 
\begin{enumerate}
\item None of the numbers $o_g(p_1), \ldots, o_g(p_r)$ is divisible by any of the numbers $p_1, \ldots, p_r$. 
\item For any subset $\{i_1, \ldots, i_s\}$ of $\{1, \ldots, r\}$, with $s\geq2$ the following inequality holds:
	\begin{equation}
	\lcm(o_g(p_{i_1}), \ldots, o_g(p_{i_s})) > \frac{2}{(g-1)^{\frac{1}{\log_2g}}}\left({p_{i_1}\dots p_{i_m}}\right)^{\frac{1}{\log_2 g}}
	\label{eq2.6}
	\end{equation}
\end{enumerate}
Then the number $p_1^{k_1} \dots p_r^{k_r}$ is complete for any $k_1 \geq 0, \ldots , k_r \geq 0$. 
\end{corollary}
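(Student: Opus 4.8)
The plan is to induct on the number of distinct prime factors, reducing the general product to Corollary~\ref{cor2.34} in the inductive step and to Theorem~\ref{th2.18} in the base case. Precisely, I would prove by induction on $s\in\{0,1,\dots,r\}$ the statement: \emph{for every $F\subseteq\{1,\dots,r\}$ with $|F|=s$ and every choice of exponents $k_i\ge 0$ for $i\in F$, the number $\prod_{i\in F}p_i^{k_i}$ is complete.} The conclusion of the corollary is then the case $F=\{1,\dots,r\}$. The cases $s=0$ and $s=1$ form the base: for $s=0$ the number is $1$, which is complete; for $s=1$ the number is a prime power $p_i^{k_i}$ with $p_i>g-1$, and since $g$ is a perfect square, Theorem~\ref{th2.18} gives completeness. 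This is the only place the perfect-square hypothesis enters.

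For the inductive step I would fix $s\ge 2$ and $F=\{i_1,\dots,i_s\}$, set $q_j:=p_{i_j}$, and apply Corollary~\ref{cor2.34} to the distinct simple primes $q_1,\dots,q_s$ (each $>g-1$). Its hypothesis (i) — that every product over a \emph{proper} subset of $\{q_1,\dots,q_s\}$ with arbitrary exponents is complete — is exactly the induction hypothesis (together with the trivial observation that the empty product $1$ is complete). Its hypothesis (ii) — that no $o_g(q_j)$ is divisible by any $q_\ell$ — is inherited verbatim from hypothesis (i) of the present corollary, since $\{q_1,\dots,q_s\}\subseteq\{p_1,\dots,p_r\}$. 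It then remains to verify hypothesis (iii) of Corollary~\ref{cor2.34}, i.e.\ $\lcm(o_g(q_1),\dots,o_g(q_s))>2^{\lceil\log_g\frac{q_1\cdots q_s}{g-1}\rceil}$, from hypothesis (ii) of the present corollary.

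That last verification is the only real computation, and it is the same logarithmic manipulation already used in Corollaries~\ref{cor2.32} and~\ref{cor2.34}: writing $P:=q_1\cdots q_s$, from $\lceil x\rceil\le x+1$ and $2^{\log_g(P/(g-1))}=(P/(g-1))^{1/\log_2 g}$ one gets $2^{\lceil\log_g\frac{P}{g-1}\rceil}\le \frac{2}{(g-1)^{1/\log_2 g}}P^{1/\log_2 g}$, and inequality \eqref{eq2.6} — applicable here precisely because $s\ge 2$ — says exactly that $\lcm(o_g(q_1),\dots,o_g(q_s))$ strictly exceeds this upper bound. With all three hypotheses checked, Corollary~\ref{cor2.34} yields that $\prod_{i\in F}p_i^{k_i}$ is complete, which closes the induction. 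I do not expect a genuine obstacle here; the points needing care are merely (a) organizing the induction over subsets so that hypothesis (i) of Corollary~\ref{cor2.34} is available at each stage, and (b) not slipping on the $\log_g$/$\log_2$ conversion in the estimate bridging \eqref{eq2.6} with hypothesis (iii) of Corollary~\ref{cor2.34}.
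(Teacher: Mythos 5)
Your proposal is correct and follows essentially the same route as the paper's proof: induct so that the base case is handled by Theorem~\ref{th2.18} (where the perfect-square hypothesis is used), and in the inductive step verify hypothesis (iii) of Corollary~\ref{cor2.34} via the estimate $2^{\lceil\log_g\frac{P}{g-1}\rceil}\leq 2\left(\frac{P}{g-1}\right)^{1/\log_2 g}$ combined with \eqref{eq2.6} and Proposition~\ref{prop2.17}. Your explicit organization of the induction over all subsets $F$ is a slightly more careful bookkeeping of what the paper states as ``assume the result holds for $r-1$ primes,'' but it is the same argument.
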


\begin{proof}
We proceed by induction on $r$. Theorem \ref{th2.18} shows that we have the result for $r=1$. Assume the result holds for $r-1$ primes. Then conditions (i) and (ii) in Corollary \ref{cor2.34} are satisfied. We check condition (iii). Let $m:= p_1\dots p_r$. 

We have, using Proposition \ref{prop2.17} in the last equality: 
\begin{equation}
2^{\lceil \log_g{\frac{m}{g-1}}\rceil} \leq 2^{\log_g{\frac{m}{g-1}} + 1} = 2\left({\frac{m}{g-1}}\right)^{\frac{1}{\log_2 g}} < o_g(m)=\text{lcm}(o_g(p_1),\dots,o_g(p_k))
\label{eq2.7}
\end{equation} 
Thus condition (iii) is satisfied and Corollary \ref{cor2.34} gives us the result. 
\end{proof}

\begin{remark}
From Theorem \ref{th2.18} we also have that $p^n$ is complete whenever $o_g(p^n)$ is even. However, as we saw in Remark \ref{rem2.14}, there are some primes which are not complete, so condition (i) in Corollary \ref{cor2.34} is not satisfied in general for an arbitrarily chosen $g$. This is why we chose $g$ to be a perfect square. 
\end{remark}

\begin{corollary}
Let $g$ be a perfect square. Let $p_1, \ldots, p_r$ be distinct simple prime numbers strictly larger than $g-1$. Assume the following conditions are satisfied: 
\begin{enumerate}
\item The numbers $o_g(p_1), \ldots, o_g(p_r), p_1, \ldots, p_r$ are mutually prime. 
\item The following inequality holds $$o_g(p_j) > \sqrt{\frac{2}{(g-1)^{\frac{1}{\log_2g}}}}\cdot p_j^{\frac{1}{\log_2 g}}$$ for all $j$
\end{enumerate}
\end{corollary}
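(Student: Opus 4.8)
The statement as written has its conclusion suppressed; following the pattern of Corollary~\ref{cor2.35}, the intended assertion is that $p_1^{k_1}\cdots p_r^{k_r}$ is complete for all $k_1\geq 0,\dots,k_r\geq 0$, and the plan is to obtain this as a direct consequence of Corollary~\ref{cor2.35}. Since $g$ is a perfect square and $p_1,\dots,p_r$ are distinct simple primes larger than $g-1$, all the standing hypotheses of Corollary~\ref{cor2.35} are already in force; it remains only to check that its numbered conditions (i) and (ii) follow from conditions (i) and (ii) here.

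First, condition (i) of Corollary~\ref{cor2.35} — that none of $o_g(p_1),\dots,o_g(p_r)$ is divisible by any of $p_1,\dots,p_r$ — is immediate, because condition (i) here asserts that the entire list $o_g(p_1),\dots,o_g(p_r),p_1,\dots,p_r$ is mutually prime. For condition (ii) of Corollary~\ref{cor2.35}, fix a subset $\{i_1,\dots,i_s\}\subseteq\{1,\dots,r\}$ with $s\geq 2$. By mutual primality the numbers $o_g(p_{i_1}),\dots,o_g(p_{i_s})$ are pairwise coprime, so
$$\lcm\big(o_g(p_{i_1}),\dots,o_g(p_{i_s})\big)=\prod_{j=1}^{s}o_g(p_{i_j}).$$
Writing $\kappa:=\tfrac{2}{(g-1)^{1/\log_2 g}}$ and applying the pointwise bound in condition (ii) here to each (positive) factor gives
$$\prod_{j=1}^{s}o_g(p_{i_j})>\prod_{j=1}^{s}\Big(\sqrt{\kappa}\;p_{i_j}^{1/\log_2 g}\Big)=\kappa^{s/2}\,\big(p_{i_1}\cdots p_{i_s}\big)^{1/\log_2 g}.$$
Now $\kappa>1$, since $\kappa>1$ is equivalent to $2^{\log_2 g}>g-1$, i.e. to $g>g-1$; hence $\kappa^{s/2}\geq\kappa$ because $s/2\geq 1$, and we conclude
$$\lcm\big(o_g(p_{i_1}),\dots,o_g(p_{i_s})\big)>\kappa\,\big(p_{i_1}\cdots p_{i_s}\big)^{1/\log_2 g},$$
which is exactly inequality \eqref{eq2.6}.

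With both hypotheses of Corollary~\ref{cor2.35} verified, that corollary delivers the conclusion that $p_1^{k_1}\cdots p_r^{k_r}$ is complete for all $k_1\geq 0,\dots,k_r\geq 0$. There is no real obstacle in this argument; the one point worth attention is the bookkeeping in the two displays above, where one sees that the square root appearing in condition (ii) is precisely the normalization that allows multiplying $s\geq 2$ of the pointwise estimates together while still retaining at least one full copy of the constant $\kappa$ — this (together with $\kappa>1$, which is where the perfect-square/size hypotheses enter indirectly via $g-1<g$) is what makes the reduction to \eqref{eq2.6} succeed for every subset.
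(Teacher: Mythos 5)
Your proposal is correct and follows essentially the same route as the paper: both reduce the statement to Corollary~\ref{cor2.35} by noting that mutual primality turns the lcm into a product, applying the pointwise bound in condition (ii) to each factor, and using $\frac{2}{(g-1)^{1/\log_2 g}}>1$ together with $s\geq 2$ to absorb the extra powers of the constant. Your explicit verification that this constant exceeds $1$ (equivalent to $g>g-1$) matches the paper's opening remark that $2>(g-1)^{1/\log_2 g}$.
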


\begin{proof}
Note first that $2>(g-1)^{\frac{1}{\log_2 g}}$. We use Corollary \ref{cor2.35}. For any subset $\{i_1, \ldots, i_s\}$ of $\{1, \ldots, r\}$ with $s \geq 2$, we have
$$
\text{lcm}(o_g(p_{i_1}),\dots, o_g(p_{i_s}))=o_g(p_{i_1})\dots o_g(p_{i_s}) \geq \left(\sqrt{\frac{2}{(g-1)^{\frac{1}{\log_2 g}}}}\right)^s\left(p_{i_1}\dots p_{i_s}\right)^{\frac{1}{\log_2g}}$$$$\geq \frac{2}{(g-1)^{\frac{1}{\log_2g}}}\left({p_{i_1} \dots p_{i_s}}\right)^\frac{1}{\log_2 g}.$$ 
\end{proof}

\begin{corollary}\label{cor2.37.1}
Let $a$ be a complete odd number. Let $p > g-1$ be a simple prime number. Assume that
\begin{enumerate}
\item p does not divide a
\item $o_g(p)$ and $o_g(a)$ are mutually prime
\item $o_g(p) > 2^{\lceil \log_g \frac{p}{g-1} \rceil}$
\end{enumerate}
Then $p^ka$ is complete for all $k \geq 0$. 
\end{corollary}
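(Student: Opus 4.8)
The plan is to run the argument of Corollaries \ref{cor2.32} and \ref{cor2.34}, but treating the complete number $a$ as a single ``generalized prime'': one reduces, by induction, to the case where $p^k a$ is itself primitive, and then derives a contradiction by computing $o_g(p^k a)$ and feeding a suitable factorization into Lemma \ref{lem2.29} (which, unlike Theorem \ref{th2.26}, is available for arbitrary odd factors). Throughout I use two elementary facts: a divisor of a complete number is complete (a divisor that were incomplete would make $a$, an odd multiple of it, incomplete by Lemma \ref{lem2.5}), and every divisor $a'\mid a$ again satisfies (i)--(iii), since $o_g(a')\mid o_g(a)$.

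\textbf{Reduction to the primitive case.} Fix $a$ and induct on $k$; the case $k=0$ is the hypothesis that $a$ is complete. Suppose $k\ge 1$ and $p^i a$ is complete for all $i<k$, but $p^k a$ is not complete. Since every incomplete number is divisible by a primitive one, $p^k a$ has a primitive divisor of the form $p^j d$ with $d\mid a$ and $0\le j\le k$; among all divisors $d\mid a$ for which $p^k d$ is incomplete, choose one, $d_0$, with the fewest prime factors (with multiplicity). If the primitive divisor of $p^k d_0$ is $p^j d_1$ with $j<k$, it divides $p^j a$, which is complete by the induction hypothesis, hence is complete --- impossible; so $j=k$. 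If $d_1<d_0$ then $p^k d_1$ is incomplete with $d_1\mid a$ having fewer prime factors than $d_0$, contradicting minimality; so $d_1=d_0$. Thus $m_0:=p^k d_0$ is primitive, all its proper divisors (in particular $p^i d_0$, $i<k$) are complete, and $d_0\mid a$ is complete and satisfies (i)--(iii). Relabelling $d_0$ as $a$, it suffices to reach a contradiction from: $m_0=p^k a$ is primitive, $a$ is complete, (i)--(iii) hold, and $p^i a$ is complete for $i<k$.

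\textbf{The order computation and Lemma \ref{lem2.29}.} By Lemma \ref{lem2.8}, $\gcd(m_0,g)=1$. Since $p$ is simple, $\iota_g(p)=1$, so Proposition \ref{prop2.19} gives $o_g(p^k)=p^{k-1}o_g(p)$ for $k\ge 1$; combining this with Proposition \ref{prop2.17}, hypothesis (ii), and $o_g(p)\mid p-1$ (so $p\nmid o_g(p)$), one obtains, with $s:=v_p(o_g(a))$,
$$o_g(p^k a)=p^{\max\{k-1-s,\,0\}}\,o_g(p)\,o_g(a),\qquad o_g(p\,a)=o_g(p^{1+s}a)=o_g(p)\,o_g(a).$$
If $k=1$, apply Lemma \ref{lem2.29} to $m_0=p\cdot a$: its hypothesis reads $o_g(p)\,o_g(a)>2^{\lceil\log_g\frac{p}{g-1}\rceil}o_g(a)$, i.e.\ exactly (iii), so $m_0$ is not primitive, a contradiction. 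If $k\ge 2$ and $s\le k-2$ (this covers in particular $p\nmid o_g(a)$), apply Lemma \ref{lem2.29} to $m_0=p^{\,k-1-s}\cdot\big(p^{1+s}a\big)$; since $o_g(m_0)=p^{\,k-1-s}\,o_g\!\big(p^{1+s}a\big)$, the hypothesis reduces to $p^{\,t}>2^{\lceil\log_g\frac{p^{t}}{g-1}\rceil}$ with $t=k-1-s\ge 1$. Using $\lceil x\rceil<x+1$ and $g^{1/\log_2 g}=2$, this is implied by $(p^{t})^{\log_2 g-1}\ge\frac{g}{g-1}$, which holds since $p\ge g+1$ and $\log_2 g\ge 2$; once more $m_0$ is not primitive, a contradiction.

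\textbf{The main obstacle.} What remains is $2\le k\le s+1$, i.e.\ $p^{k-1}\mid o_g(a)$. Here $o_g(p^k a)=o_g(p^{k-1}a)=o_g(p)o_g(a)$, so adjoining the extra factor of $p$ does not enlarge the order; consequently every ``peel off a power of $p$'' factorization makes the hypothesis of Lemma \ref{lem2.29} fail, and the completeness of the proper divisor $p^{k-1}a$ does not by itself force completeness of $m_0$. I expect this to be the hard part. The natural route is to reduce to the prime-power machinery: writing $m_0=p^k\prod q_i^{e_i}$ and applying Theorem \ref{th2.26}, one is left to prove that the ``clamped'' number $p^{1+s}\prod q_i^{\iota_g(q_i)+J_i}$ (with $J_i$ the $q_i$-adic valuation of $\lcm(o_g(p),o_g(q_1),\dots)$) is complete, which must be extracted from the completeness of $a$ together with (i)--(iii). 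An alternative is to push the extreme cycle of $m_0$ forward under the reduction $\bz_{m_0}\to\bz_{p^{k-1}a}$, which kills the digit $m_0$ and turns the cycle into the multiplicative orbit $i\mapsto g^{-i}\overline{x_0}$, and to show this produces a non-trivial cycle for the complete number $p^{k-1}a$ --- the delicate point being to control the lift so that its points lie in $\big(0,\tfrac{p^{k-1}a}{g-1}\big]$.
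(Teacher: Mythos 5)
Your proposal is incomplete, and you have correctly located where: the case $k\ge 2$ with $p^{k-1}\mid o_g(a)$ is never resolved. For the cases you do settle ($k=1$, and $k\ge2$ with $v_p(o_g(a))\le k-2$) your route is essentially the paper's: reduce to the situation where $p^k a$ (or a divisor of that shape) is primitive, compute $o_g(p^ka)$ via Propositions \ref{prop2.17} and \ref{prop2.19}, and feed a ``peel off a power of $p$'' factorization into Lemma \ref{lem2.29}. Your reduction to the primitive case is in fact more careful than the paper's one-line remark that ``because $a$ is complete and $p$ is prime, this means that $p^ka$ is complete,'' and your verification of the inequality $p^t>2^{\lceil\log_g\frac{p^t}{g-1}\rceil}$ is sound. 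But as a proof of the corollary as stated, the proposal has a genuine gap: in the remaining case every available factorization makes the hypothesis of Lemma \ref{lem2.29} (and of Lemma \ref{lem2.24}) fail, and the two escape routes you sketch are not carried out.

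You should know, however, that this ``main obstacle'' is not an artifact of your approach — it is exactly the point at which the paper's own proof is unjustified. The paper asserts
$o_g(p^ka)=\lcm(o_g(p^k),o_g(a))=p^{k-1}o_g(p)o_g(a)$,
which requires $p^{k-1}o_g(p)$ and $o_g(a)$ to be coprime. Hypothesis (ii) gives $\gcd(o_g(p),o_g(a))=1$ (and $p\nmid o_g(p)$ automatically, since $o_g(p)\mid p-1$), but nothing in (i)--(iii) forbids $p\mid o_g(a)$: this happens whenever $a$ has a prime factor $q\equiv 1\ (\mathrm{mod}\ p)$ with $p$ dividing the order of $g$ modulo $q$. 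In that situation the correct value is $o_g(p^ka)=p^{\max\{k-1-s,0\}}o_g(p)o_g(a)$ with $s=v_p(o_g(a))$, exactly as you computed, and for $k\le s+1$ the quantity $o_g(p^ka)/o_g(a)=o_g(p)<p$ is too small for the lemma. So your proof covers precisely the cases the paper's proof actually covers; the clean repair for both is to add the hypothesis $p\nmid o_g(a)$ (which holds in all of the paper's applications of this corollary), under which $s=0$ and your argument closes.
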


\begin{proof}
Since $p$ does not divide $a$, $p^k$ is mutually prime with $a$. Since $p$ is simple, with Propositions \ref{prop2.17} and \ref{prop2.19} we have 
$$o_g(p^ka) = \lcm(o_g(p^k),o_g(a)) = p^{k-1}o_g(p)o_g(a). $$ 
So then $p^{k-1}o_g(p) > p^{k-1}2^{\lceil \log_g \frac{p}{g-1} \rceil}$, by hypothesis. Taking the $\log_2$ of both sides, with $k\geq2$ and $p \geq g+1$ we get
$$ \log_2 (p^{k-1}o_g(p)) > \log_2 (p^{k-1}2^{\lceil \log_g \frac{p}{g-1} \rceil}) = \log_2 p^{k-1} + {\lceil \log_g \frac{p}{g-1} \rceil} $$
$$ \geq \log_g p^{k-1} + 1 + {\lceil \log_g \frac{p}{g-1} \rceil} \geq \lceil{\log_g p^{k-1}\rceil} + {\lceil \log_g \frac{p}{g-1} \rceil} \geq\lceil{\log_g \frac{p^{k}}{g-1}\rceil}.$$

We used here the fact that $$\log_2p^{k-1}\geq \log_gp^{k-1}+1\Longleftrightarrow p^{k-1}g\leq (p^{k-1})^{\frac1{\log_g2}},$$
which is true, because $\log_g2\leq \frac12$, since $g\geq 4$ and $p>g$.

Therefore 
$$p^{k-1}o_g(p) > 2^{\lceil{\log_g \frac{p^{k}}{g-1}\rceil}},$$
for $k \geq 2$ and also for $k=1$ by hypothesis. By Lemma \ref{lem2.29}, $p^ka$ cannot be primitive, for $k \geq 1$ and because $a$ is complete and $p$ is prime, this means that $p^ka$ is complete. 
\end{proof}

\begin{example}
Let $g = 16$. We want to prove that $17^k \cdot 19^l$ is complete for any $k,l$. We have $o_{16}(17) = 2, o_{16}(17^2) = 34, o_{16}(19) = 9,$ and $o_{16}(19^2) = 171$, so $17$ and $19$ are both simple primes. Since $g$ is a perfect square, by Theorem \ref{th2.18} , $17^k$ and $19^l$ are complete for any $k,l$. Also, $$\lcm(o_{16}(17), o_{16}(19)) = 18 > 2^{\lceil \log_{16}\frac{17\cdot 19}{15}\rceil} = 4.$$ The result follows from Corollary \ref{cor2.34}. 
\end{example}

\begin{example}
Let $g=36$. We want to prove that $37^k \cdot 43^l$ is complete for any $k, l$. Since $g$ is a perfect square, $37^k$ is complete for any $k$ by Theorem \ref{th2.18}. Also, $o_{36}(43) = 3, o_{36}(43^2) = 129$, so $43$ is a simple prime, and $o_{36}(37) = 2$ is mutually prime with $o_{36}(43) = 3$. In addition, $$o_{36}(43) = 3 > 2^{\lceil \log_{36}\frac{43}{35}\rceil} = 2,$$ so the result follows from Corollary \ref{cor2.37.1}. 

The same argument applies to show that $37^k \cdot 47^l, 37^k \cdot 53^l, 37^k \cdot 59^l, 37^k \cdot 67^l, 37^k \cdot 71^l$ are complete. We can use this argument also for $47^k \cdot 53^l \cdot 59^j$. First, note that $47, 53$ and $59$ are simple primes with $o_{36}(47) = 23, o_{36}(53) = 13,$ and $o_{36}(59) = 29$. Then $47^k \cdot 53^l$ is complete by Corollary \ref{cor2.37.1}. By Propositions \ref{prop2.17} and \ref{prop2.19}, $o_{36}(47^l\cdot 53^k)$ is relatively prime with $o_{36}(59)$, so $47^k \cdot 53^l \cdot 59^j$ is also complete by Corollary \ref{cor2.37.1}. 
\end{example}

\begin{example}
Let $g$ be any even perfect square less than $1000$. We will show $907^k\cdot 911^l$ is complete for any $k,l$. With a computer check, $907, 911$ are both simple primes for every even perfect square less than $1000$. Moreover, $o_g(907)$ and $o_g(911)$ are relatively prime for each $g$. With another computer check, we also have that 
$$o_g(907) > 2^{\lceil \log_{g} \frac{907}{g-1}\rceil} \text{ and } o_g(911) > 2^{\lceil \log_{g} \frac{911}{g-1}\rceil}$$ for all $g$, so $907^k\cdot 911^l$ is complete by Corollary \ref{cor2.37.1}. 
\end{example}

 \noindent {\it Acknowledgments} This material is based upon work supported by the National Science Foundation under Award No. 1356233. This work was partially supported by a grant from the Simons Foundation (\#228539 to Dorin Dutkay) 

\bibliographystyle{alpha}	
\bibliography{eframes}

\begin{thebibliography}{JKS14b}

\bibitem[DH16]{DuHa16}
Dorin~Ervin Dutkay and John Haussermann.
\newblock Number theory problems from the harmonic analysis of a fractal.
\newblock {\em J. Number Theory}, 159:7--26, 2016.

\bibitem[DHL13]{MR3055992}
Xin-Rong Dai, Xing-Gang He, and Chun-Kit Lai.
\newblock Spectral property of {C}antor measures with consecutive digits.
\newblock {\em Adv. Math.}, 242:187--208, 2013.

\bibitem[DJ06]{DJ06}
Dorin~Ervin Dutkay and Palle E.~T. Jorgensen.
\newblock Iterated function systems, {R}uelle operators, and invariant
  projective measures.
\newblock {\em Math. Comp.}, 75(256):1931--1970 (electronic), 2006.

\bibitem[DJ07]{DJ07d}
Dorin~Ervin Dutkay and Palle E.~T. Jorgensen.
\newblock Fourier frequencies in affine iterated function systems.
\newblock {\em J. Funct. Anal.}, 247(1):110--137, 2007.

\bibitem[DJ12]{DJ12}
Dorin~Ervin Dutkay and Palle E.~T. Jorgensen.
\newblock Fourier duality for fractal measures with affine scales.
\newblock {\em Math. Comp.}, 81(280):2253--2273, 2012.

\bibitem[Hut81]{Hut81}
John~E. Hutchinson.
\newblock Fractals and self-similarity.
\newblock {\em Indiana Univ. Math. J.}, 30(5):713--747, 1981.

\bibitem[JKS12]{MR2966145}
Palle E.~T. Jorgensen, Keri~A. Kornelson, and Karen~L. Shuman.
\newblock An operator-fractal.
\newblock {\em Numer. Funct. Anal. Optim.}, 33(7-9):1070--1094, 2012.

\bibitem[JKS14a]{MR3202868}
Palle E.~T. Jorgensen, Keri~A. Kornelson, and Karen~L. Shuman.
\newblock Scalar spectral measures associated with an operator-fractal.
\newblock {\em J. Math. Phys.}, 55(2):022103, 23, 2014.

\bibitem[JKS14b]{MR3310953}
Palle E.~T. Jorgensen, Keri~A. Kornelson, and Karen~L. Shuman.
\newblock Scaling by 5 on a {$\frac{1}{4}$}-{C}antor measure.
\newblock {\em Rocky Mountain J. Math.}, 44(6):1881--1901, 2014.

\bibitem[JP98]{JP98}
Palle E.~T. Jorgensen and Steen Pedersen.
\newblock Dense analytic subspaces in fractal {$L\sp 2$}-spaces.
\newblock {\em J. Anal. Math.}, 75:185--228, 1998.

\bibitem[Li07]{MR2297038}
Jian-Lin Li.
\newblock {$\mu\sb {M,D}$}-orthogonality and compatible pair.
\newblock {\em J. Funct. Anal.}, 244(2):628--638, 2007.

\bibitem[{\L}W02]{MR1929508}
Izabella {\L}aba and Yang Wang.
\newblock On spectral {C}antor measures.
\newblock {\em J. Funct. Anal.}, 193(2):409--420, 2002.

\bibitem[Str00]{MR1785282}
Robert~S. Strichartz.
\newblock Mock {F}ourier series and transforms associated with certain {C}antor
  measures.
\newblock {\em J. Anal. Math.}, 81:209--238, 2000.

\end{thebibliography}

\end{document}